\documentclass[12pt,letterpaper,reqno]{amsart}
\usepackage[letterpaper,margin=1.2in,headheight=15pt]{geometry} 
\usepackage{graphicx,bbm}
\usepackage{amsmath, amssymb, amsfonts}
\usepackage[utf8]{inputenc}
\usepackage{epstopdf}
\usepackage{tikz-cd} 
\usepackage{setspace}
\usepackage{xcolor}
\usepackage{tcolorbox}
\DeclareGraphicsRule{.tif}{png}{.png}{`convert #1 `dirname #1`/`basename #1 .tif`.png}
\setlength{\columnsep}{2.5cm} 
\usepackage[titletoc,title]{appendix}
\usepackage{enumitem}
\usepackage{soul}

\usepackage{hyperref}
\definecolor{darkred}{rgb}{0.5,0.15,0.15}
\hypersetup{colorlinks=true,urlcolor=darkred,linkcolor=darkred,citecolor=darkred}

\newcommand{\CY}{\mathcal{X}}

\newcommand{\C}{\mathbb{C}}
\newcommand{\Q}{\mathbb{Q}}
\newcommand{\Z}{\mathbb{Z}}
\newcommand{\R}{\mathbb{R}}

\newcommand{\VH}{\mathcal{H}}
\newcommand{\Oo}{\mathcal{O}}

\newcommand{\Fc}{F}

\newcommand{\cB}{\ensuremath{\mathcal B}}

\newcommand{\cu}{C} 
\newcommand{\cuo}{C^\circ}
\renewcommand{\ell}{X} 

\newcommand{\de}{\mathrm{d}}
\newcommand{\e}{{\mathrm e}}
\newcommand{\del}{{\partial}}
\newcommand{\delbar}{\ensuremath{\overline{\partial}}}

\newcommand{\ubar}{\ensuremath{{\overline{u}}}}

\renewcommand{\Im}{\mathrm{Im}\;}
\renewcommand{\hat}{\widehat}

\theoremstyle{plain}
\newtheorem{thm}{Theorem}[section]
\newtheorem{prop}[thm]{Proposition}
\newtheorem{lem}[thm]{Lemma}
\newtheorem{cor}[thm]{Corollary}

\theoremstyle{definition}
\newtheorem{dfn}[thm]{Definition}
\newtheorem{ex}[thm]{Example}

\newtheorem*{question*}{Question}

\theoremstyle{remark}
\newtheorem{rem}[thm]{Remark}

\numberwithin{equation}{section}

\title{Parabolic Higgs bundles, $tt^*$ connections and Opers}

\author{Murad Alim}
\author{Florian Beck} 
\address{Fachbereich Mathematik, Universit\"at Hamburg, Bundesstr. 55, 20146, Hamburg}
\author{Laura Fredrickson}
\address{Department of Mathematics, Stanford University, Stanford, CA 94305, USA}

\begin{document}

\maketitle

\begin{abstract}

The non-abelian Hodge correspondence identifies complex variations of Hodge structures with certain Higgs bundles. 
In this work we analyze this relationship, and some of its ramifications, when the variations of Hodge structures are determined by a (complete) one-dimensional family of compact Calabi--Yau manifolds.
This setup enables us to apply techniques from mirror symmetry.
For example, the corresponding Higgs bundles extend to parabolic Higgs bundles to the compactification of the base of the families.
We determine the parabolic degrees of the underlying parabolic bundles in terms of the exponents of the Picard--Fuchs equations obtained from the variations of Hodge structure. 

Moreover, we prove in this setup that the flat non-abelian Hodge or $tt^*$-connection is gauge equivalent to an oper which is determined by the corresponding Picard--Fuchs equations. 
This gauge equivalence puts forward a new derivation of non-linear differential relations between special functions on the moduli space which generalize Ramanujan's relations for the differential ring of quasi-modular forms.

\end{abstract}

\section{Introduction}\label{intro}

The geometry of gauge theories and string theories has inspired many fruitful interactions between mathematics and physics and has put forward new structures and relations within mathematics. In this work we focus on different families of flat $G$-connections, coming from two entirely different physical setups. 

On the one hand we will consider $G$-Higgs bundles which originated in \cite{Hit87a, Hit87b} and were further developed by many people in various directions, see e.g. \cite{Simpson:1992} for the extension to the parabolic setting and \cite{Donagi-Spectral} for the extension to $G$-Higgs bundles\footnote{See \cite{DalakovSurvey} for a survey and \cite{Anderson:2017} for some recent interactions and open problems.} .
The physical origin of $G$-Higgs bundles are the Hermitian Yang--Mills equations for a gauge theory with gauge group $G\subset GL(N,\C)$ in four real dimensions. 
The dimensional reduction of the latter to two real dimensions, so-called Hitchin's (self-duality) equations, admit a formulation on any Riemann surface $\cu$. 
These are equations for a pair $((E,\varphi),h)$ consisting of a $G$-Higgs bundle $(E,\varphi)$ and a Hermitian metric $h$ on $E$.
Recall that a $G$-Higgs bundle is a holomorphic vector bundle $E$ with a $G$-structure and a one-form $\varphi\in \Gamma(\cu,K_\cu\otimes \mathrm{ad}(E))$ with values in the adjoint bundle $\mathrm{ad}(E)$ associated to $E$. 
The Higgs field $\varphi$ encodes the gauge field data in the two reduced dimensions.
The pair $((E,\varphi),h)$ satisfies Hitchin's equations if 
\begin{equation}\label{sd-eqs}
F_{D(\bar{\partial}_E,h)} + \left[ \varphi, \varphi^{\dagger_h}\right]=0\,, \quad \bar{\partial}_{E} \varphi =0\,.
\end{equation}
Here $D(\delbar_E, h)$ is the Chern connection associated to the holomorphic structure $\delbar_E$ on $E$ and the Hermitian metric $h$. 
The equations \eqref{sd-eqs} are equivalent to the flatness of the $\C^\times$-family 
\begin{equation}
    \nabla_{\mathrm{NAH}}^\zeta=D(\bar{\partial}_{E},h)+\zeta^{-1} \varphi+\zeta \varphi^{\dagger_h},\quad \zeta\in \C^\times
\end{equation}
of $G$-connections on $E$.
Passing from $(E,\varphi)$ to $\nabla_{\mathrm{NAH}}^\zeta$ and vice versa is the content of the non-abelian Hodge correspondence\footnote{As is well-known, see \cite{Hit87a, Simpson:1992}, this correspondence is an equivalence between stable $G$-Higgs bundles and irreducible flat $G$-connections. Stability/Irreducibility does not play a crucial role in our work so that we suppress it here.}.
We therefore refer to the family $\nabla_{\mathrm{NAH}}^\zeta$ as the family of non-abelian Hodge connections associated to the $G$-Higgs bundle $(E,\varphi)$.
For our purposes, it is sufficient to consider $G=GL(N,\mathbb{C})$. 

On the other hand we consider the geometry emerging from the variation of Hodge structures (VHS) on the middle dimensional cohomology of certain families of Calabi--Yau (CY) manifolds which features prominently in the mirror symmetry of CY threefolds, see e.g. \cite{CoxKatz}. The flat holomorphic Gau\ss--Manin connection of the VHS
plays an important role in identifying the mirror isomorphism of CY threefolds with the help of the associated Picard--Fuchs equations. The flatness conditions constrains the base manifold to be a special K\"ahler manifold, see \cite{Strominger:1990pd,Freed1999} for definitions and \cite{LledoReview} for a review. 

A vast generalization of special K\"ahler geometry and the flat holomorphic Gau\ss--Manin connection is given by the notion of topological-anti-topological fusion or $tt^*$-geometry which was put forward by Cecotti and Vafa in \cite{Cecotti:1991me}.
The origin of $tt^*$-geometry is the study of families of two dimensional physical field theories with $\mathcal{N}=2$ supersymmetry. 
Some of these can be realized as non-linear sigma models into a target manifold $X$ which could be a CY manifold, or a Fano manifold. Others are only realizable as Landau-Ginzburg models, which are non-compact manifolds equipped with a complex-valued function called the superpotential.

In every case, the $tt^*$-geometry refers to the data of a bundle $E$, together with a Hermitian metric $h$ on $E$ and a symmetric complex pairing $\eta$ on $E$, over a moduli space $\mathcal{B}$ of theories.
Their variation is described by a flat connection $\nabla_{tt^*}$ which is constructed as a combination of a connection $D$ and a one-form $\mathsf{C}$ valued in the endomorphisms of the bundle $E$.
Here the connection $D$ is compatible with a Hermitian metric $h$ as well as with a complex pairing $\eta$. The endomorphism-valued one-form $\mathsf{C}$ is given by an underlying Frobenius manifold structure on $\mathcal{B}$ defined by the operators of the two-dimensional theories parameterized by $\mathcal{B}$.
The $tt^*$-geometry on $E$ over $\mathcal{B}$ is then governed by the $tt^*$-equations: 
\begin{gather}\label{tt-eqs}
[D',D'']+[\mathsf{C}',\mathsf{C}'']=0, \quad D'\mathsf{C}''=0=D''\mathsf{C}', 
\\ 
\label{eq:tt-eqs2} [D',D']=0=[D'', D''], \quad [\mathsf{C}',\mathsf{C}']=0=[\mathsf{C}'',\mathsf{C}''], \quad [D',\mathsf{C}']=0=[D'',\mathsf{C}''].
\end{gather}

Here $'$ and $''$ denote the $(1,0)$- and $(0,1)$-part respectively and the commutators are defined such that $[C',C']$ is a $(2,0)$-form etc.
Equations
\eqref{tt-eqs} and \eqref{eq:tt-eqs2} are equivalent to the flatness of the $\C^{\times}$-family of $tt^*$-connections
\begin{equation}
    \nabla_{tt^*}^\zeta=D+\zeta^{-1}\mathsf{C}'+\zeta \mathsf{C}'', \quad \zeta\in \C^\times,
\end{equation}
on $E$. Here \eqref{tt-eqs} is equivalent to  $\mathcal{F}^{(1,1)}_{\nabla_{tt^*}^\zeta}=0$, while \eqref{eq:tt-eqs2} is equivalent to $\mathcal{F}^{(0,2)}_{\nabla_{tt^*}^\zeta}=0=\mathcal{F}^{(2,0)}_{\nabla_{tt^*}^\zeta}$.

A differential-geometric approach to $tt^*$-equations was given in \cite{Dubrovin:1993}. A complex geometric framework was given for the $tt^*$-equations in \cite{Hertlingtt} in terms of $DC\tilde{C}$-structures (\cite[Definition 2.9]{Hertlingtt}) and TERP-structures (cf. \cite[Definition 2.12]{Hertlingtt}), see \cite{Hertling1,Hertling2} for an overview. 

Although Hitchin's equations and the $tt^*$-equations have different physical origins, they have a common mathematical structure. 
More precisely, solutions to these equations determine twisted (pluri-)harmonic maps from the base manifold to $GL(N,\C)/U(N)$ (cf. \cite[\S 9]{Hit87a}, \cite{Donaldson-twisted}, \cite[Theorem 3]{Dubrovin:1993}, \cite{CortesSchaefer})). In some cases, the non-abelian Hodge machinery can be used to produce solutions of the 
$tt^*$-equations.  In \cite{Guest1, Guest2, Guest3}, Guest, Its, and Lin
proves the existence existence of radially-symmetric solutions of the $tt^*$-equations when the base
manifold is $\mathbb{C}^\times$.  In \cite{Mochizuki2},Mochizuki efficiently proves the existence of the same radially-symmetric solutions using the Hitchin-Kobayashi correspondence, i.e. he describes the relevant wild Higgs bundles then produces a harmonic map using the Hitchin-Kobayashi correspondence.

We take a different perspective to compare the equations, namely we specialize the $tt^*$-equation to the following setup.
If $\mathcal{B}$ is a Riemann surface parametrizing compact Calabi--Yau manifolds, then solutions of the corresponding $tt^*$-equations are solutions of Hitchin's equations on $\mathcal{B}$ which are invariant (up to gauge transformations) under the $\C^\times$-action rescaling the Higgs field. On higher-dimensional K\"ahler manifolds $X$, the $tt^*$-equations are $\mathcal{F}_\nabla=0$ while Hitchin's equations are instead 
\begin{equation*}
 \mathcal{F}_\nabla^{2,0} = 0 \qquad \mathcal{F}_\nabla ^{0,2}=0 \qquad \Lambda \mathcal{F}_\nabla^{1,1}=0,
\end{equation*}
where $\Lambda$ denotes contraction with the symplectic form of $X$.
Thus, on a K\"ahler manifold, the $tt^*$-equations are a special case of Hitchin's equations. 

However, $tt^*$-equations are more general in other directions. 
Firstly, they can be formulated on any complex manifold.  Secondly, the physical theories which have a geometric realization in terms of a non-linear sigma model admit more physical deformations than just the geometric ones.  
All the additional deformations are governed by the $tt^*$-equations as well. A rigorous mathematical framework describing this general version of the $tt^*$-equations remains challenging. 

In this paper, we relate the two setups in the case where the $tt^*$-geometry is governed by a VHS $\mathcal{H}$ determined by the middle-dimensional cohomology of a family $\pi\colon\mathcal{X}\to \mathcal{B}$ of CY $d$-folds with $\dim_{\C}(\mathcal{B})=1$.  
This is, for example, the context of mirror symmetry for CY manifolds where the family $\pi\colon\mathcal{X}\to \mathcal{B}$ is assumed to be complete, i.e. locally isomorphic around each $u\in \mathcal{B}$ to the moduli space of complex structures on $X_u:=\pi^{-1}(u)$\footnote{In the following we adopt the terminology in physics and refer to a `moduli space' as a parameter space with some non-degeneracy assumption like completeness of the parameterized families of CY $d$-folds.}. 
In this context, we explain how various notions that originated on the $tt^*$-equation side naturally appear on the Higgs bundle side, and vice versa.  This goes far beyond comparing the equations \eqref{sd-eqs} and \eqref{tt-eqs} (cf. Remark \ref{rem:tthit}).  

For example, we show that opers, a distinguished family of flat connections which play an important role in the geometric Langlands correspondence (see \cite{beilinson2005opers} and the discussion of the conformal limit below), are equivalent to variations of Hodge structures with special sections. 
We call such a section a generic cyclic vector. 
Its derivatives with respect to the Gau\ss--Manin connection give a basis of the complex vector bundle $\mathcal{H} \rightarrow \cB$ on an open and dense subset of $\mathcal{B}$.
The existence of a generic cyclic vector is natural from the perspective of mirror symmetry: if the VHS $\mathcal{H}$ is induced by a complete family $\mathcal{X}\to \mathcal{B}$ of CY manifolds, then $\mathcal{H}$ admits a generic cyclic vector.
Hence mirror symmetry provides many concrete examples of opers. 

Parabolic Higgs bundles \cite{SimpsonNC}, \cite{Yokogawa}, a generalization of Higgs bundles, naturally arise in the $tt^*$-geometry under consideration. 
Typically, the base $\mathcal{B}$ is a non-compact Riemann surface and the family $\pi\colon \mathcal{X}\to \mathcal{B}$ extends to a family $\hat{\pi}\colon \hat{\mathcal{X}}\to \hat{\mathcal{B}}$ over the compactification $\hat{\mathcal{B}}$ of $\mathcal{B}$.
The fibers over $D:=\hat{\mathcal{B}}-\mathcal{B}$ are usually singular and each $d\in D$ is a regular singular point of the holomorphic Gau\ss--Manin connection $\nabla$ of $\mathcal{H}$.
Equivalently, the Picard--Fuchs equations, which are locally determined by a (generic) cyclic vector, are regular singular at $d\in D$.

The qualitative behaviour of the (multi-)valued solutions to the Picard--Fuchs equations at $d\in D$ is determined by their so-called exponents associated to $d$.
After reviewing how Deligne's canonical extension of $(\mathcal{H},\nabla)$ (\cite{Deligne}, \cite{Schmid}) determines a parabolic Higgs bundle $(\hat{E},\hat{\varphi})$ on $\hat{\mathcal{B}}$ with divisor $D$ (see \cite{SimpsonUbiquity} for $D=\emptyset$ and \cite[Chapter 13]{CarlsonEtAl2} for an arbitrary reduced divisor $D$), we compute the (parabolic) degrees of $(\hat{E},\hat{\varphi})$ in terms of the exponents at each $d\in D$.
In the typical context of mirror symmetry, where $D=\{0,1,\infty\}\subset \hat{\mathcal{B}}=\mathbb{CP}^1$, this is sufficient to explicitly determine the corresponding parabolic Higgs bundles.
 
We point out that the first computation of the parabolic degrees in terms of the exponents appeared in \cite[\S 6]{EskinEtAl} for $D=\{0,1,\infty\}\subset \hat{\mathcal{B}}=\mathbbm{CP}^1$ and certain VHS $\mathcal{H}$ of weight $3$ with $\mathrm{rk}(\mathcal{H})=4$ in the context of Lyapunov exponents. 
This was taken up by \cite{DoranEtAlVHS} for the same base curve but more general VHS in order to compute Hodge numbers of certain compact K\"ahler manifolds which are fibered over $\mathbb{CP}^1$.
Our approach neither requires a restriction on $\hat{\mathcal{B}}$ nor on $\mathcal{H}$.
The previous papers focus on the underlying parabolic bundles whereas we focus on the parabolic Higgs bundles. 

Another motivation for our work comes from Gaiotto's conjecture (\cite[\S 4.2]{Gaiotto:2014bza}).
It relates the family $\nabla_{\mathrm{NAH}}^\zeta$ of non-abelian Hodge connections associated to a Higgs bundle $(E,\varphi)$ on $\mathcal{B}$ to a family of opers, associated to the same Higgs bundle, through the so-called conformal limit.
This limit is obtained by introducing an additional real parameter $R$ in the family $\nabla_{\mathrm{NAH}}^\zeta$ and taking an appropriate double scaling limit of $\nabla_{\mathrm{NAH}}^{\zeta,R}$ as $R,\zeta$ approach $0$, while holding their ratio $\lambda = \frac{\zeta}{R}$ fixed. 
This conjecture was proven if $\mathcal{B}$ is a compact Riemann surface in \cite{Dumitrescu2016a, CollierWentworth}. Understanding the precise relation between the two families of flat connections is, in particular, important for understanding the relation between the exact WKB-analysis\footnote{See \cite{Iwaki} and references therein.} and the works of Gaiotto, Moore and Neitzke \cite{Gaiotto:2009hg}.

All of the parabolic Higgs bundles  $\mathcal{B}=\mathbb{CP}^1-\{0,1,\infty\}$ coming from the $tt^*$-equations are fixed by the natural $\C^\times$-action $\varphi \to \zeta \varphi$ (up to isomorphism).  It follows that the two-parameter family $\nabla_{\mathrm{NAH}}^{R \lambda, R}$ is independent of $R$, hence the conformal limit $\lim_{R \to 0} \nabla_{\mathrm{NAH}}^{R \lambda, R}$ 
is simply equal to $\nabla_{\mathrm{NAH}}^\lambda$.  While the conformal limit is a trivial process, the change of frame identifying $\nabla_{\mathrm{NAH}}^{\lambda}$ with the Gau\ss-Manin connection $\nabla_{\mathrm{GM}}^{\hbar}$ contains highly non-trivial information: 
it gives a new way to derive certain differential rings on the moduli space $\mathcal{B}$, see \S\ref{sec:gauge}.  In our cases, 
the Hermitian metric $h$ solving Hitchin's equations and the $tt^*$-equations is expressed in terms of special functions $g_i$ on $\mathcal{B}$.
These differential rings are rings with a differential $D$ which are generated by finitely many special functions $g_i$, $i\in I$, on $\mathcal{B}$. 
In fact, they are algebraic: the derivatives  $Dg_i$, $i\in I$, are polynomials in the $g_j$, $j\in I$, with holomorphic functions on $\mathcal{B}$ as coefficients.
These relations are called differential ring relations and give nonlinear differential equations for the $g_i$. 

If $\mathcal{B}$ is a moduli space for elliptic curves, the differential ring relations are directly related to the differential ring of quasi-modular forms developed by Kaneko and Zagier \cite{Kaneko} as shown in \cite{Hosono,Mov12c} (see Appendix \ref{app:differential_ring} for a short summary).
If $\mathcal{B}$ is a moduli space of CY threefolds, then the differential ring relations have strong implications for mirror symmetry: 
in \cite{YY}, Yamaguchi and Yau discovered differential ring relations for the moduli space of the mirror quintic CY threefold. 
In these cases, the special functions on the moduli space $\mathcal{B}$ are given by the generating functions for higher genus Gromov--Witten invariants on the quintic CY threefold (under mirror symmetry).
Hence the differential ring relations show that all higher genus Gromov--Witten invariants are determined by finitely many data.
These results were later generalized to arbitrary mirror pairs of CY threefolds in \cite{Alim1} and developed for lattice polarized K3 surfaces in \cite{Alim:2014vea}.

Besides mirror symmetry, such differential rings are interesting in their own right because they provide an analogue for CY threefolds of quasi-modular forms for elliptic curves\footnote{See e.g. \cite{MovasatiReview} and references therein}.
By providing the explicit link between these developments and Higgs bundles, we hope to further open the door towards new insights and generalizations of these structures.

The plan of our paper is the following. In \S\ref{explicit} we start with an explicit example of a VHS which appears in the context of mirror symmetry for lattice polarized K3 surfaces with complex one-dimensional moduli spaces. This example will allow us to elucidate the ingredients and the constructions relevant for our paper. 
We proceed in \S\ref{s:VHStt} to motivate VHS and to show their precise connection to opers.
In \S\ref{s:VHSHiggs} we first review the relation between Deligne's canonical extensions of VHS and parabolic Higgs bundles. 
Afterwards we compute the (parabolic) degrees of the resulting parabolic Higgs bundles in terms of the associated exponents. 
Finally, in \S\ref{examples} we give further examples motivated from mirror symmetry and review the necessary constructions, we furthermore discuss the general structure of the gauge transformation from the family of non--abelian Hodge flat connections to the family of opers.
In the appendices we collect basic definitions and notions concerning quasi-modular forms, differentials rings as well as parabolic Higgs bundles. 

In our beginning example in \S2 and throughout the paper, we provide a number of explicit and detailed examples in order to facilitate building a bridge between the geometry of Higgs bundles on the one side and $tt^*$-geometry, VHS and mirror symmetry on the other side.

\subsection*{Acknowledgements}
This project was initiated at the American Institute of Mathematics (San Jose, CA) workshop titled ``Singular geometry and Higgs bundles in String Theory'' from October 30 to November 3, 2017.  We would like to thank all the workshop participants for stimulating discussions and especially Szil\'{a}rd Szab\'{o} and Rodrigo Barbosa for discussions related to this project. The work of the MA and FB is supported through the DFG Emmy Noether grant AL 1407/2-1.  The work of LF is supported by NSF DMS-2005258. LF was supported by


\section{Introductory example} \label{explicit}

In this section, we walk through the example of Calabi--Yau manifolds of dimension $2$, i.e. K3 surfaces.  
This example is simple enough not to be overwhelming but complex enough to showcase some of our findings.   
For example, we determine the gauge transformations between the families of opers and of flat non-abelian Hodge connections attached to the corresponding variations of Hodge structures of weight $2$ and rank $3$.

In the following, we consider families 
\begin{equation}\label{eq:k3dim1}
    \pi\colon \mathcal{X} \to \mathcal{B},\quad \dim(\mathcal{B})=1
\end{equation}
of K3 surfaces and mostly $\mathcal{B}\cong \mathbb{CP}^1-\{0,1,\infty\}$.
Many such examples are obtained from mirror symmetry of lattice polarized K3 surfaces (\cite{Dolgachev}). 
A lattice polarized (or $\check{M}$-polarized) K3 surface is a pair $(X,j)$ of a K3 surface and a primitive embedding $j\colon \check{M}\hookrightarrow \mathrm{Pic}(X)$ of a lattice $\check{M}$ into the Picard group of $X$.
\begin{ex}\label{ex:quarticpolarized}
Let $X$ be any smooth quartic surface in $\mathbb{CP}^3$.
Then $X$ is an algebraic K3 surface with $\mathrm{rk}(\mathrm{Pic}(X))=1$.
More precisely, $\mathrm{Pic}(X)$ is generated by (the isomorphism class of) a line bundle $L\in \mathrm{Pic}(X)$ with $\int_X c_1(L)\wedge c_1(L)=4$.
In particular, $\check{M}:=\langle L\rangle$ is isomorphic to the even lattice $\langle 4 \rangle$ and $j\colon \check{M}\hookrightarrow \mathrm{Pic}(X)$ is primitive.
Hence$(X,j)$ is a $\langle 4\rangle$-polarized K3 surface. 
\end{ex}
Mirror symmetry of lattice polarized K3 surfaces is a statement about pairs of complete families\footnote{Every tangent space $T_u \mathcal{B}$ of a complete family $\mathcal{X}\to \mathcal{B}$ is naturally isomorphic to the space of infinitesimal deformations of the lattice polarized K3 surface $(X_u,j_u)$. In particular, a complete family varies non-trivially if $\dim(\mathcal{B})>0$.} 
\begin{equation}\label{eq:XBXcheckBcheck}
    \check{\pi}\colon\check{\mathcal{X}}\to \mathcal{B}_{\check{M}} \mbox{ and } \pi\colon \mathcal{X} \to \mathcal{B}_M
\end{equation}
of $\check{M}$- and $M$-polarized K3 surfaces respectively which are dual, or ``mirror'', to each other in a precise sense (see the introduction of \cite{Dolgachev}). 
Here $\mathcal{B}_{\check{M}}$ and $\mathcal{B}_{M}$ is the moduli space of $\check{M}$- and $M$-polarized K3 surfaces respectively. 

The lattice $M$ is obtained from the lattice $\check{M}$ as follows. 
For any algebraic K3 surface $X$, $H^2(X,\Z)$ with the natural intersection pairing is isomorphic to the orthogonal sum
\begin{equation}
    L_{K3}:=U^{\perp 3}\perp E_8^{\perp 2}
\end{equation}
which has signature $(t_+,t_-)=(3,19)$.
Here $U$ is the lattice of signature $(1,1)$ and $E_8$ is the lattice whose intersection product is determined by the $E_8$-root system and hence of signature $(0,8)$.
The orthogonal complement $\check{M}^\perp$ of $\check{M}$ in $L_{K3}$ splits (non-canonically) into $\check{M}^\perp=U\perp M$.
The lattice $M$ is uniquely determined up to isomorphism and is called the mirror of $\check{M}$.
It satisfies $\check{\check{M}}=M$.
Moreover, if $\check{M}$ is of signature $(1,t)$, then $M$ is of signature $(1,18-t)$.
 
The completeness condition on the families \eqref{eq:XBXcheckBcheck} implies  
\begin{equation*}
    T_{\check{u}} \check{\mathcal{B}}\cong H^{1,1}(\check{X}_{\check{u}})/j_{\check{u}}(\check{M}), \quad T_u\mathcal{B}\cong H^{1,1}(X_u)/j_u(M).
\end{equation*}
This makes sense because $\mathrm{Pic}(X)\subset \left(H^{1,1}(X)\cap H^2(X,\Z)\right)\subset H^{1,1}(X)$ for any algebraic K3 surface $X$. 
If $\check{M}$ is of rank one, which implies that $\check{M}$ is isomorphic to one of the even one-dimensional lattices $\langle 2 n \rangle$, $n\in \Z$, then 
\begin{equation*}
    \dim(\mathcal{B})=\dim H^{1,1}(X_u)-\mathrm{rk}(M)=20-19=1.
\end{equation*}
Constructions from mirror symmetry (cf. \S\ref{ss:ms}) therefore produce many families $\pi: \mathcal{X} \rightarrow \mathcal{B}$ with $\dim_{\mathbb{C}} \mathcal{B}=1$ as mirror families of $\langle 2n \rangle$-polarized K3 surfaces for $n\in \Z$.

\begin{ex}\label{ex:FirstAppearanceOfMirrorQuartic}
Let $\check{\pi}\colon \check{\mathcal{X}} \to \check{\mathcal{B}}_{\check{M}}$ be a complete family of quartic K3 surfaces so that $\check{M}=\langle 4 \rangle$, cf. \ref{ex:quarticpolarized}, and 
\begin{equation*}
M:=\langle 4 \rangle^\vee\cong U\perp E_8\perp E_8 \perp \langle -2n\rangle,
\end{equation*}
see \cite[\S 7]{Dolgachev}. 
Then a mirror family $\pi:\mathcal{X}\to \mathcal{B}_M$, simply called \emph{mirror quartic}, is a family of $M$-polarized K3 surfaces. 
In fact, $\mathcal{B}\cong \mathbb{CP}^1-\{ 0,1,\infty\}$, see \cite[\S 7]{Dolgachev}.
\end{ex}

\subsection{Hodge decomposition and filtration}\label{ss:hodgek3}
Let $X$ be an algebraic K3 surface so that we have the Hodge structure 
\begin{equation}
H^{2}(X,\mathbbm{C}) = H^{2,0} (X) \oplus H^{1,1}(X) \oplus H^{0,2}(X), \quad \overline{H^{2,0}}=H^{0,2}
\end{equation}
of weight $2$, cf. \S \ref{s:VHStt}.

Let $\check{M}=\langle 2n\rangle$ and $M:=\check{M}$ be its mirror. 
If $X$ is $M$-polarized, then the full Hodge structure on $H^2(X,\Z)$ is too large for our purposes.
Instead, we consider 
\begin{equation}
    T(X):= M^\perp= U\perp \langle 2n \rangle \subset H^2(X,\Z).
\end{equation}
This defines a Hodge substructure of $H^2(X,\Z)$ of weight $2$ and rank $3$ wit Hodge decomposition space $T^{p,q}(X)$.
Since $T^{2,0}(X)=H^{2,0}(X)$ and $T^{1,1}(X)\subset H^{1,1}(X)$, we write 
\begin{equation}
 T^{p,q}(X)=H^{p,q}(X)   
\end{equation}
by abuse of notation throughout this section. 
The equivalent data of a Hodge filtration is denoted by 
\begin{equation*}
   F^3T(X)=\{0\} \subset  F^2(X)=H^{2,0}(X)\subset F^1(X)\subset F^0(X).
\end{equation*}
It satisfies $T^{p,q}(X) = F^pT(X)/F^{p+1}T(X)$ for $p = 0, 1, 2$. 
The Hodge structure $T(X)$ carries a polarization given by the non-degenerate bilinear form
\begin{align} 
Q: T(X) \times T(X) \rightarrow \mathbb{Z}\,, \quad Q(\alpha,\beta) = - \int_{X} \alpha \wedge \beta\, .
\end{align}
which extends to $T(X,\C):=T(X)\otimes_{\Z}\C$.
The Weil operator $W \in \mathrm{End}(T(X,\C))$ acts on $T^{p,q}$ by multiplication with a constant:
\begin{equation}
\left. W \right|_{H^{p,q}} = i^{p-q}\,.
\end{equation}
Together with the polarization, it induces the non-degenerate pairing
\begin{align}
\eta : T(X,\mathbb{C}) \times T(X,\mathbb{C}) \rightarrow \mathbb{C}\,,\quad \eta(\omega_i,\omega_j)=Q(W(\omega_i),\omega_j)\,, 
\end{align}
as well as the Hermitian (Hodge) metric, 
\begin{align}
h : T(X,\mathbb{C}) \times T(X,\mathbb{C}) \rightarrow \mathbb{C}\,,\quad h(\omega_i,\omega_j)=Q(W(\omega_i),\overline{\omega_j})\,,
\end{align}
for $ \omega_i,\omega_j \in T(X,\C)\,$.


\subsection{Variation of Hodge structure}\label{ss:vhsintro}
We next consider a complete family 
\begin{equation*}
    \pi\colon \mathcal{X}\to \mathcal{B}:=\mathbb{CP}^1-\{0,1,\infty\}
\end{equation*}
of $M$-polarized K3 surfaces with $\mathrm{rk}(M)=18$. 
This setup generalizes Example \ref{ex:FirstAppearanceOfMirrorQuartic} is the most common in mirror symmetry for such lattice polarized K3 surfaces. 
The variation of the Hodge filtrations over the base manifold $\mathcal{B}$ is governed by the variation of Hodge structures
$$
(\mathcal{H}_{\mathbb{Z}},\nabla, Q, F^{\bullet}\mathcal{H}_{\mathcal{O}}).
$$
Here $\mathcal{H}_{\mathcal{O}}= \mathcal{H}_{\mathbb{Z}} \otimes \mathcal{O}_{\mathcal{B}}$ for the locally constant sheaf $\mathcal{H}_{\Z}$ with stalks $H^2(X_u,\Z)$ and $\nabla$ is the induced Gau\ss--Manin connection.
The decreasing filtration $F^{\bullet} \mathcal{H}_{\mathcal{O}}$ of holomorphic sub-bundles $F^i\mathcal{H}_{\mathcal{O}}\,, i=2,1,0$, is defined by taking fiberwise the previously defined Hodge filtration.
It satisfies Griffiths' transversality:
$\nabla F^p \mathcal{H}_{\mathcal{O}} \subset F^{p-1} \mathcal{H}_{\mathcal{O}} \otimes \Omega^1_{\mathcal{B}}\,,$ 
see \S\ref{s:VHStt} for more details.
Moreover, we write $\mathcal{H}^{p,q} = F^{p}\mathcal{H}_{\Oo}/F^{p+1}\mathcal{H}_{\Oo}$ as in the fiberwise case.

Let $\mathcal{L}=F^2\mathcal{H}_{\mathcal{O}}$ be the line bundle over $\mathcal{B}$ whose fibers are the spaces $H^{2,0}(X_u)$ and $\mathcal{L}^{-1}$ be its dual. 
Further let $\omega_0$ be a local frame.  
By the properties of a VHS, this form satisfies the following Picard--Fuchs equation in cohomology:
\begin{equation}\label{cohPF}
\nabla^3_{\frac{\partial}{\partial u}} \omega_0 =  -b_2 \nabla^2_{\frac{\partial}{\partial u}} \omega_0-b_1(u) \nabla_{ \frac{\partial}{\partial u}} \omega_0  -b_0 \omega_0   \,,
\end{equation}
with rational functions $b_i(u)\,,i=0,1,2$.

This relation in cohomology becomes a third order ODE, the Picard--Fuchs equation, with regular singular points for the (multi-valued) periods $\pi^i\,, i=0,1,2$  obtained by integrating $\omega_0$ over a basis of cycles $\gamma^0,\gamma^1,\gamma^2\in H_2(X_u,\mathbb{Z})$. 
\begin{equation}\label{eq:pfK3}
\left( \frac{d^3}{du^3} + b_2 \frac{d^2}{du^2} +  b_1 \frac{d}{du} + b_0\right) \pi^i (u) =0\,, \quad i=0,1,2\,.
\end{equation}
In this example, the rational functions $b_i$ are related to the following Hodge-theoretic pairing: 
\begin{dfn}\label{def:gy}
The Griffiths--Yukawa coupling $c \in \Gamma(\mathcal{L}^{-2} \otimes ((T^*\mathcal{B})^2)$ is defined by 
\begin{equation}
    c(\omega,\omega'):=\eta(\omega, \nabla^2\omega')\in \Gamma(T^*\mathcal{B})^2),\quad \omega,\omega'\in \mathcal{L}.
\end{equation}
If we have fixed a local frame $\omega_0$ and $\tfrac{\partial}{\partial u}$ of $\mathcal{L}$ and $T\mathcal{B}$ respectively, then we denote by 
\begin{equation}\label{eq:defc}
    c_{uu}:=\eta(\omega_0,\nabla^2_u\omega_0)\,,\quad \nabla_u:=\nabla_{\partial/ \partial u}\,,
\end{equation}
the coordinate expression of $c$.
\end{dfn}
The next proposition gives the relation to the functions $b_i$ in the Picard--Fuchs equation.
\begin{prop}
The coordinate expression $c_{uu}$ of the Griffiths--Yukawa coupling satisfies the differential equation:
\begin{equation}\label{eq:gyeq}
\partial_u c_{uu} = -\frac{2}{3} b_2\,c_{uu}\,,
\end{equation}
where $b_2$ is the rational function appearing in the Picard--Fuchs equation.
\end{prop}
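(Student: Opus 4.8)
The plan is to reduce everything to the flat polarization $Q$ and to exploit the Hodge--Riemann relations together with the Picard--Fuchs equation \eqref{cohPF}. First I would record the two structural inputs. Since $Q$ is flat for the Gau\ss--Manin connection, for any local sections $\alpha,\beta$ of $\mathcal{H}_{\mathcal{O}}$ one has $\partial_u Q(\alpha,\beta) = Q(\nabla_u\alpha,\beta) + Q(\alpha,\nabla_u\beta)$; and since the weight is even, $Q$ is symmetric. Combining Griffiths transversality $\nabla_u F^p \subset F^{p-1}$ with the first Hodge--Riemann bilinear relation $Q(F^p,F^q)=0$ for $p+q>2$, and using $\omega_0 \in F^2$, $\nabla_u\omega_0 \in F^1$, $\nabla_u^2\omega_0 \in F^0$, I obtain the vanishing identities
\begin{equation}\label{eq:vanish}
Q(\omega_0,\omega_0)=0, \qquad Q(\omega_0,\nabla_u\omega_0)=0.
\end{equation}
Finally, because $\omega_0$ is a section of $\mathcal{L}=F^2\mathcal{H}_{\mathcal{O}}=\mathcal{H}^{2,0}$, the Weil operator acts by $W\omega_0 = i^{2}\omega_0 = -\omega_0$, so that $c_{uu} = \eta(\omega_0,\nabla_u^2\omega_0) = Q(W\omega_0,\nabla_u^2\omega_0) = -Q(\omega_0,\nabla_u^2\omega_0)$. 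It therefore suffices to prove the claimed equation for $f := Q(\omega_0,\nabla_u^2\omega_0)$, the overall sign being irrelevant to the homogeneous ODE.

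Next I would differentiate $f$ and feed in the Picard--Fuchs equation. By flatness,
\begin{equation}\label{eq:diffraw}
\partial_u f = Q(\nabla_u\omega_0,\nabla_u^2\omega_0) + Q(\omega_0,\nabla_u^3\omega_0).
\end{equation}
Substituting \eqref{cohPF} for $\nabla_u^3\omega_0$ and using \eqref{eq:vanish} to kill the $b_1$- and $b_0$-terms gives $Q(\omega_0,\nabla_u^3\omega_0) = -b_2\, f$, hence
\begin{equation}\label{eq:step1}
\partial_u f = Q(\nabla_u\omega_0,\nabla_u^2\omega_0) - b_2\, f.
\end{equation}

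The remaining, and key, step is to rewrite the cross term $Q(\nabla_u\omega_0,\nabla_u^2\omega_0)$ back in terms of $\partial_u f$, which closes the equation. Differentiating the second identity in \eqref{eq:vanish} and using flatness yields $Q(\nabla_u\omega_0,\nabla_u\omega_0) = -Q(\omega_0,\nabla_u^2\omega_0) = -f$. Differentiating this once more, and invoking the symmetry of $Q$, gives $2\,Q(\nabla_u\omega_0,\nabla_u^2\omega_0) = -\partial_u f$, i.e. $Q(\nabla_u\omega_0,\nabla_u^2\omega_0) = -\tfrac12\partial_u f$. Plugging this into \eqref{eq:step1} produces $\partial_u f = -\tfrac12\partial_u f - b_2 f$, that is $\tfrac32\partial_u f = -b_2 f$, so that $\partial_u f = -\tfrac23 b_2 f$; translating back through $c_{uu}=-f$ gives \eqref{eq:gyeq}.

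I expect the only genuinely delicate points to be bookkeeping rather than conceptual: getting the signs right in the Weil-operator normalization and in the symmetry $Q(\alpha,\beta)=Q(\beta,\alpha)$ for even weight, and making sure the vanishing \eqref{eq:vanish} is applied with the correct Hodge filtration degrees. The conceptual heart is the self-referential differentiation that expresses the cross term through $\partial_u f$ itself; once that is in place, the stated coefficient $-\tfrac23$ drops out automatically.
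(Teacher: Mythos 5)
Your proof is correct and follows essentially the same route as the paper's: differentiate the coupling, substitute the Picard--Fuchs equation to handle the $\nabla_u^3$ term, and close the equation by twice differentiating the vanishing identity $Q(\omega_0,\nabla_u\omega_0)=0$ to express the cross term $Q(\nabla_u\omega_0,\nabla_u^2\omega_0)$ as $-\tfrac12\partial_u f$. The only difference is presentational: the paper phrases the same steps via the integrals $\int_{X_u}\cdot\wedge\cdot$ and ``type considerations,'' while you use the flat polarization $Q$, the Hodge--Riemann relations, and the symmetry of $Q$ in even weight, which is the same computation in abstract language.
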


\begin{proof}
We have
\begin{equation} \label{cder}
\partial_u c_{uu} = \int_{X_u} \nabla_u \omega_0 \wedge \nabla^2_u \omega_0 + \int_{X_u} \omega_0 \wedge \nabla^3_u \omega_0\,,
\end{equation}
and furthermore
\begin{equation*}
\int_{X_u} \omega_0 \wedge \nabla_u \omega_0 =0\,,
\end{equation*}
by type considerations. By differentiating this last equation it follows that \begin{equation}\label{eq:defc2}
    c_{uu}= - \int_{X_u} \nabla_u \,\omega_0 \wedge \nabla_u\, \omega_0
\end{equation} and hence $$ \partial_u c_{uu} =- 2 \int_{X_u} \nabla_u \omega_0 \wedge \nabla^2_u \omega_0\,,$$
the claim follows by substituting this last expression for the first term on the RHS of \eqref{cder} and the Picard--Fuchs equation for the second term.
\end{proof}


\subsection{Frame for the bundle and Hermitian metric}

We next construct a local frame $\omega= \left( \omega_0\,, \omega_1\,, \omega_2 \right)$ for the bundle $\mathcal{H}$ with fibers $H^2(X_u,\mathbb{C})$ such that $\omega_i\in \mathcal{H}^{2-i,i}$. 
We choose a local frame $\omega_0$ of $\mathcal{H}^{2,0}=\mathcal{L}$ and define
\begin{equation} \label{eq:h00} e^{-K}:= h_{0\bar{0}} = h(\omega_0,\omega_0)= \int_{X_u} \omega_0 \wedge \overline{\omega}_0\,,  \end{equation}
where $K:\mathcal{B} \to \mathbb{R}$ defines a K\"ahler potential for the projective special K\"ahler metric $G$ on $\mathcal{B}$. We then proceed to construct $\omega_1$ and $\omega_2$ from $\omega_0$ using the Gau\ss--Manin connection. By Griffiths transversality we obtain 
$$(\nabla_u\, \omega_0 ) \de u  \in F^1H^2(X_u,\C) \otimes T^* \mathcal{B}\, , \quad \nabla_u:=\nabla_{\partial/\partial u} $$ 
and hence we make the ansatz
\begin{equation*}
\nabla_u \, \omega_0 \de u =  A_u  \de u \omega_0 +\omega_1\,,
\end{equation*}
with $A_u \de u \in \Gamma(T^* \mathcal{B})$ and $\omega_1 \in H^{1,1}(X_u)\otimes T^*\mathcal{B}$. 
Thus we obtain $A_u= - \partial_u K =:- K_u$ from 
$$ \partial_u e^{-K}= - K_u e^{-K}=  \int_{X_u} \nabla_u \omega_0 \wedge \bar{\omega}_0 = A_u \int_{X_u} \omega_0 \wedge \overline{\omega}_0\,,$$
since $-K_u= (h_{0\bar{0}})^{-1} \partial_u h_{0\bar{0}}$, this is also the coordinate expression for the Chern connection one-form, and we have:
\begin{equation}\label{eq:omega1}\omega_1  = ((\nabla_u-D_u)\, \omega_0 )\, \de u \in H^{1,1}(X_u,\mathbb{C}) \otimes T^*\mathcal{B}\,.\end{equation}
We proceed by constructing $\omega_2 \in H^{0,2}(X_u) \otimes (T^* \mathcal{B})^2$ similarly:
\begin{equation}\label{eq:omega2} \omega_2\, = ((\nabla_u - D_u ) \, (\omega_1 ))  \de u \, ,
\end{equation}
with $D_u \omega_1 = h_{1\bar{1}}^{-1} \partial_u h_{1\bar{1}}\, \omega_1$.  We then obtain similarly:
\begin{equation}\label{VHSclosure}
 ((\nabla_u - D_u ) \, \omega_2 )  =0\,, 
 \end{equation}
with $$D_u\, \omega_2 =h_{2\bar{2}}^{-1} \partial_u h_{2\bar{2}}\,  \omega_2 \,.$$ 
We get the following:
\begin{prop}\label{hermetric}
The expression for the Hermitian metric $h=h_{a\bar{b}} dt^a\otimes d\bar{t}^{\bar{b}}\,, a,b=0,1,2$ in the frame $(\omega_0, \omega_1,\omega_2)$ is given by
\begin{equation}\label{eq:hinex}
(h_{a\bar{b}})= \left(\begin{array}{ccc} 
e^{-K} &0 &0 \\
0& e^{-K}G_{u\overline{u}} \de u \de \bar{u} & 0 \\
0 &0 &\e^{-K} G_{u \ubar}^2 \de u^2 \de \bar{u}^2   
\end{array} \right) \,.
\end{equation}
where $G_{u \ubar}= \del_u \del_{\ubar} K$. The 
 K\"ahler metric $G_{u\bar{u}} \de u \,\de \bar{u}$ further satisfies:
\begin{equation} \label{eq:Kahlerinex} e^{-K} \, G_{u\bar{u}} = c_{uu}\, e^{K}\, G^{u\bar{u}} \,\overline{c_{uu}} \,,
\end{equation}
where $G^{u\bar{u}}=G_{u \ubar}^{-1}$ are the components of the inverse metric. The expression for the complex pairing $\eta=\eta_{ab}dt^a dt^b\,, a,b=0,1,2$ in this frame is:
\begin{equation} 
(\eta_{ab})= c_{uu} \left(\begin{array}{ccc} 
0 &0 &1 \\
0& 1  & 0 \\
1 &0 & 0   
\end{array} \right) \de u^2 \,.
\end{equation}

\end{prop}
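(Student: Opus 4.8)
The plan is to read off all three assertions from the first and second Hodge--Riemann bilinear relations together with the flatness of $\nabla$, handling the three line bundles $\mathcal{H}^{2,0},\mathcal{H}^{1,1},\mathcal{H}^{0,2}$ one at a time. Throughout I write $\widetilde{\omega}_1,\widetilde{\omega}_2$ for the honest sections of $\mathcal{H}^{1,1},\mathcal{H}^{0,2}$ obtained by stripping the factors $\de u,\de u^2$ from $\omega_1,\omega_2$, so that by \eqref{eq:omega1}--\eqref{VHSclosure} one has $\nabla_u\omega_0=-K_u\omega_0+\widetilde{\omega}_1$, $\nabla_u\widetilde{\omega}_1=D_u\widetilde{\omega}_1+\widetilde{\omega}_2$ and $\nabla_u\widetilde{\omega}_2=D_u\widetilde{\omega}_2$. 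Since $W$ acts as the scalar $\I^{p-q}$ on $\mathcal{H}^{p,q}$ (i.e.\ as $-1,+1,-1$ on $\omega_0,\widetilde{\omega}_1,\widetilde{\omega}_2$), and since $h(\alpha,\beta)=Q(W\alpha,\overline{\beta})$ while $\eta(\alpha,\beta)=Q(W\alpha,\beta)$, the first Hodge--Riemann relation (that $Q$ pairs $\mathcal{H}^{p,q}$ only with $\mathcal{H}^{q,p}$) makes the matrix of $h$ diagonal and the matrix of $\eta$ anti-diagonal: $h_{i\bar{j}}\propto Q(\omega_i,\overline{\omega_j})$ is nonzero only for $i=j$, and $\eta_{ij}\propto Q(\omega_i,\omega_j)$ only for $i+j=2$. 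The entry $h_{0\bar{0}}=e^{-K}$ is the definition \eqref{eq:h00}. For $\eta$, the type computation already used for \eqref{eq:defc2} gives $\eta(\widetilde{\omega}_1,\widetilde{\omega}_1)=Q(\widetilde{\omega}_1,\widetilde{\omega}_1)=-\int_{X_u}\widetilde{\omega}_1\wedge\widetilde{\omega}_1=c_{uu}$, while integrating by parts the identically vanishing $(3,1)$-integral $\int_{X_u}\omega_0\wedge\widetilde{\omega}_1$ yields $\eta(\omega_0,\widetilde{\omega}_2)=\int_{X_u}\omega_0\wedge\widetilde{\omega}_2=-\int_{X_u}\nabla_u\omega_0\wedge\widetilde{\omega}_1=-\int_{X_u}\widetilde{\omega}_1\wedge\widetilde{\omega}_1=c_{uu}$, and the same identity with the factors commuted gives $\eta(\widetilde{\omega}_2,\omega_0)=c_{uu}$. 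Reinstating the form factors produces the stated $\eta$ matrix.

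Next I would compute the metric entry $h_{1\bar{1}}$ by the standard Weil--Petersson/special-K\"ahler argument, which is driven by flatness. Writing $N=e^{-K}=\int_{X_u}\omega_0\wedge\overline{\omega_0}$ and using $\nabla_{\bar{u}}\omega_0=0$ (holomorphicity of $\omega_0$), I get $\partial_u N=\int_{X_u}\nabla_u\omega_0\wedge\overline{\omega_0}=-K_u N$. Differentiating once more, the crucial flatness identity $\nabla_{\bar{u}}\nabla_u\omega_0=\nabla_u\nabla_{\bar{u}}\omega_0=0$ kills one term, and after discarding cross terms of the wrong Hodge type I obtain $\partial_u\partial_{\bar{u}}N=|K_u|^2N+\int_{X_u}\widetilde{\omega}_1\wedge\overline{\widetilde{\omega}_1}$. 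Inserting this into $G_{u\bar{u}}=\partial_u\partial_{\bar{u}}K=-\partial_u\partial_{\bar{u}}\log N$ gives $\int_{X_u}\widetilde{\omega}_1\wedge\overline{\widetilde{\omega}_1}=-e^{-K}G_{u\bar{u}}$, hence $h_{1\bar{1}}=h(\widetilde{\omega}_1,\widetilde{\omega}_1)=Q(\widetilde{\omega}_1,\overline{\widetilde{\omega}_1})=-\int_{X_u}\widetilde{\omega}_1\wedge\overline{\widetilde{\omega}_1}=e^{-K}G_{u\bar{u}}$, as claimed (positivity being the second Hodge--Riemann relation).

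The crux, which I expect to be the main obstacle, is the relation \eqref{eq:Kahlerinex} together with the last entry $h_{2\bar{2}}$. The difficulty is that the naive flatness bookkeeping relating $\widetilde{\omega}_1$ and $\widetilde{\omega}_2$ is contaminated by the fact that $\widetilde{\omega}_1,\widetilde{\omega}_2$ are smooth, not holomorphic, frames, so Chern coefficients are not log-derivatives of the $h_{i\bar{i}}$. The clean way around this is to exploit that in the rank $3$, weight $2$ situation $\mathcal{H}^{1,1}$ is one-dimensional and defined over $\R$ (it is conjugation-invariant). Thus $\widetilde{\omega}_1=\mu\,v$ for a real generator $v$ and a function $\mu$, so $\int_{X_u}\widetilde{\omega}_1\wedge\widetilde{\omega}_1=\mu^2\int_{X_u}v\wedge v$ and $\int_{X_u}\widetilde{\omega}_1\wedge\overline{\widetilde{\omega}_1}=|\mu|^2\int_{X_u}v\wedge v$ have equal modulus; since $\int_{X_u}v\wedge v<0$ by the second Hodge--Riemann relation this gives $|c_{uu}|=h_{1\bar{1}}=e^{-K}G_{u\bar{u}}$, i.e.\ $G_{u\bar{u}}=e^{K}|c_{uu}|$, which upon squaring and using $G^{u\bar{u}}=G_{u\bar{u}}^{-1}$ is exactly \eqref{eq:Kahlerinex}.

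Finally, for $h_{2\bar{2}}$ I would use that $\mathcal{H}^{0,2}=\C\,\overline{\omega_0}$ is likewise one-dimensional, so $\widetilde{\omega}_2=\lambda\,\overline{\omega_0}$. Comparing $\eta(\omega_0,\widetilde{\omega}_2)=\lambda\,\eta(\omega_0,\overline{\omega_0})=\lambda\,h(\omega_0,\omega_0)=\lambda e^{-K}$ with the value $\eta(\omega_0,\widetilde{\omega}_2)=c_{uu}$ found above forces $\lambda=c_{uu}e^{K}$. Since the definitions give $h(\overline{\omega_0},\overline{\omega_0})=Q(W\overline{\omega_0},\omega_0)=e^{-K}$ (using $W\overline{\omega_0}=-\overline{\omega_0}$), I conclude $h_{2\bar{2}}=|\lambda|^2h(\overline{\omega_0},\overline{\omega_0})=|c_{uu}|^2e^{K}$, which by \eqref{eq:Kahlerinex} equals $e^{-K}G_{u\bar{u}}^2$. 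Reinstating the tensor factors $\de u\,\de\bar{u}$ and $\de u^2\,\de\bar{u}^2$ then completes the matrix of $h$ and finishes the proof.
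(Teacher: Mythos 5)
Your proof is correct, and most of its architecture coincides with the paper's own: the diagonality of $h$ and anti-diagonality of $\eta$ from the first Hodge--Riemann relation, the Weil--Petersson-type computation for $h_{1\bar{1}}$ (your double differentiation of $N=e^{-K}$ using flatness is the paper's direct expansion of $-\int_{X_u}(\nabla_u\omega_0+K_u\omega_0)\wedge\overline{(\nabla_u\omega_0+K_u\omega_0)}$ in different clothing), and the use of $\dim\mathcal{H}^{0,2}=1$ to write $\widetilde{\omega}_2=\lambda\,\overline{\omega_0}$ and pin down $\lambda=c_{uu}e^{K}$ --- your comparison via $\eta(\omega_0,\widetilde{\omega}_2)$ is exactly the paper's comparison of $\int_{X_u}\omega_0\wedge\omega_2$ computed two ways, with $\lambda$ being the paper's coefficient $D_{uu}$. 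The one step where you genuinely deviate is \eqref{eq:Kahlerinex}: the paper writes $\omega_1=\overline{\omega_1}\,L_u^{\bar{u}}\,\de u\otimes\partial_{\bar{u}}$, solves $L_u^{\bar{u}}=e^{K}G^{u\bar{u}}c_{uu}$ by computing $-\int_{X_u}\omega_1\wedge\omega_1$ in two ways, and substitutes back into $-\int_{X_u}\omega_1\wedge\overline{\omega_1}$, whereas you take a \emph{real} generator $v$ of the conjugation-stable line $\mathcal{H}^{1,1}$, observe that $\int_{X_u}\widetilde{\omega}_1\wedge\widetilde{\omega}_1$ and $\int_{X_u}\widetilde{\omega}_1\wedge\overline{\widetilde{\omega}_1}$ have equal modulus, and invoke the second Hodge--Riemann relation to get $e^{-K}G_{u\bar{u}}=|c_{uu}|$, which squares to \eqref{eq:Kahlerinex}. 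The two arguments are equivalent --- the paper's relation is precisely the statement $|L_u^{\bar{u}}|=1$, and both sides of \eqref{eq:Kahlerinex} are manifestly real and positive, so no information is lost in passing through the modulus --- but yours makes the positivity input explicit and avoids introducing the coefficient $L_u^{\bar{u}}$, while the paper's version hands you the complex identity directly in the form in which it is later used for the differential ring relations.
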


\begin{proof}
The metric is diagonal in the frame $(\omega_0, \omega_1, \omega_2)$ because (1) the basis respects the Hodge decomposition and (2) the polarization of the VHS obeys the Riemann--Hodge bilinear relations \ref{eq:RHi}, \ref{eq:RHii} given in \S\ref{ss:VHS}. We have defined $h(\omega_0, \omega_0)=e^{-K}$
in \eqref{eq:h00}, for the other entries we have
\begin{align*}
    h_{1\bar{1}}&= h(\omega_1,\omega_1)= -\int_{X_u} \omega_1 \wedge \overline{\omega_1} \\
 &= -\int_{X_u} (\nabla_u \omega_0 + K_{u} \omega_0) \wedge (\nabla_{\bar{u}} \overline{\omega_0} + K_{\bar{u}} \overline{\omega_0})   \de u\, \de \bar{u} \\
 &= \left(- \partial_u \partial_{\bar{u}} e^{-K} - K_{\bar{u}} \partial_u e^{-K} - K_{u} \partial_{\bar{u}} e^{-K} - K_u K_{\bar{u}} e^{-K}\right) \de u \, \de \bar{u} \\
 &= e^{-K}G_{u\bar{u}} \de u\, \de \bar{u}\,.
 \end{align*}

To obtain $h_{2\bar{2}}:= h(\omega_2,\omega_2)$ we first note that by construction we have $\omega_2 \in H^{0,2} \otimes (T^*\mathcal{B})^2$. 
Since $H^{0,2}$ is spanned by $\overline{\omega_0}$, we make the ansatz
\begin{equation}\label{eq:ansatzomega2}
\omega_2 = \overline{\omega_0}\, D_{uu}\,\de u^2\,,
\end{equation}
with $D_{uu}\de u^2 \in  (T^*\mathcal{B})^2$. 
On the one hand, the polarization on $H^2(X_u,\Z)$ together with the Riemann--Hodge bilinear relations give  
\begin{align*}
    \int_{X_u} \omega_0 \wedge \omega_2 &= \int_{X_u} \omega_0 \wedge (\nabla_u-D_u)^2 \omega_0 \de u^2 \\
    &= \int_{X_u} \omega_0 \wedge \nabla_u^2\, \omega_0 \,\de u^2 = c_{uu}\, \de u^2\,.
\end{align*}
On the other hand, using the ansatz \eqref{eq:ansatzomega2}, we obtain
\begin{align*}
    \int_{X_u} \omega_0 \wedge \omega_2 &= \int_{X_u} \omega_0 \wedge \overline{\omega_0}\,D_{uu} \de u^2 \\
    &= e^{-K}\,D_{uu} \de u^2 \,.
\end{align*}
Comparing the previous two equalities gives $D_{uu}= c_{uu} e^{K}$ and hence
$$ h_{2\bar{2}}= \int_{X_u} \omega_2 \wedge \overline{\omega_2} = e^{2K} c_{uu} \overline{c_{uu}} \left(\int_{X_u} \overline{\omega_0} \wedge \omega_0\right) \de u^2 \, \de \bar{u}^2 = e^{K}c_{uu}\overline{c_{uu}} \de u^2 \,\de \bar{u}^2\,. $$

To obtain the expression relating $G_{u \ubar}, K, c_{u u}$ in \eqref{eq:Kahlerinex} we note that $\omega_1 \in H^{1,1} \otimes T^*\mathcal{B}$; since $H^{1,1}$ is one-dimensional we must have:
\begin{equation} \label{eq:ansatz2} \omega_1 = \overline{\omega_1}\, L_{u}^{\bar{u}}\,\de u \otimes \frac{\partial}{\partial \bar{u}}\,,\end{equation}
with $L_{u}^{\bar{u}}\,\de u \otimes \frac{\partial}{\partial \bar{u}} \in  T^*\mathcal{B} \otimes \overline{T\mathcal{B}}$. To determine $L_u^{\ubar}$, we compute $\int_{X^1} \omega_1 \wedge \omega_1$ in two ways. First,
\begin{align*}
    -\int_{X_u} \omega_1 \wedge \omega_1 &= -\left(\int_{X_u} (\nabla_u-D_u)\omega_0 \wedge (\nabla_u-D_u) \omega_0\right) \de u^2 \\
    &= -\left(\int_{X_u} \nabla_u \,\omega_0 \wedge \nabla_u\,\omega_0\right) \,\de u^2 \overset{\eqref{eq:defc2}}{=} c_{uu} \,\de u^2\,,
\end{align*}
on the other hand, using the ansatz in \eqref{eq:ansatz2} we obtain:
\begin{align*}
    -\int_{X_u} \omega_1 \wedge \omega_1 &= - \left(\int_{X_u} \omega_1 \wedge \overline{\omega_1}\right) \,L_{u}^{\bar{u}} \de u\otimes \frac{\partial}{\partial \bar{u}}\\
    &=  e^{-K} G_{u\bar{u}} \,L_{u}^{\bar{u}} \de u^2\,, 
\end{align*}
and hence $L_{u}^{\bar{u}}= e^{K}G^{u\bar{u}} c_{uu}\,.$
We have:
$$ e^{-K}G_{u\bar{u}} \de u \de \bar{u}= - \int_{X_u} \omega_1 \wedge \overline{\omega_1}= - e^{2K}G^{u\bar{u}} G^{u\bar{u}} c_{uu}\overline{c_{uu}} \int_{X_u} \overline{\omega_1} \wedge \omega_1 =e^{K}G^{u\bar{u}}c_{uu}\overline{c_{uu}} \de u \, \de \bar{u}\,,$$
this also gives the form the the entry $h_{2\bar{2}}$ in \eqref{eq:hinex}.
 The entries of $\eta$ follow from \eqref{eq:defc},\eqref{eq:defc2}.
\end{proof}

\begin{rem}\label{rem:metric}
In \eqref{eq:hinex} we use the fact that $\nabla$ is an oper to identify $\mathcal{H}^{p,q}$ with $\mathcal{H}^{p-1,q+1}\otimes TB$ via the map $\nabla-D$ (whose $(1,0)$-part is the Higgs field, see Section \ref{s:VHSHiggs}). For example, $e^{-K}G_{u\bar{u}}\de u \de \bar{u}$ is a metric on $\mathcal{H}^{2,0}\otimes TB  \cong \mathcal{H}^{1,1}$. The entry $h_{2\bar{2}}=e^{-K}G_{u\bar{u}}^2 \de u^2 \, \de \bar{u}^2$ is a metric on $\mathcal{H}^{2,0}\otimes TB^{\otimes 2}\cong \mathcal{H}^{0,2}$. 
\end{rem}

\subsection{Holomorphic Gau\ss--Manin and non-abelian Hodge/\texorpdfstring{$tt^*$}{tt*} flat connection}\label{sec:opergm}

The variation of Hodge structure provides the data of a holomorphic Gau\ss-Manin connection which is reflected by the Picard--Fuchs equation describing the variation of Hodge structure in \eqref{cohPF},\eqref{eq:pfK3}. 
Assume that $\omega_0$ is a cyclic vector, i.e. 
$$\omega_{\mathrm{GM}} = \left( \omega_0\,, \nabla_u \omega_0 \,\de u \,, \nabla^2_u \omega_0\, \de u^2 \right)\,, $$
is a frame of $\mathcal{H}$  (cf. Definition \ref{dfn:cyclic})\footnote{In analogy to Remark \ref{rem:metric} we do not contract with a vector field here.}.
From \eqref{cohPF} we see that the Gau\ss--Manin connection $\nabla$ is given by
\begin{equation}\label{eq:GMK3}
\nabla_{\mathrm{GM}} = \de +\left( \begin{array}{ccc} 0&0&- b_0 \\
1&0&- b_1\\
0&1&- b_2
\end{array}\right) \de u \,. 
\end{equation}
in this frame.
Note that it is a holomorphic connection on $\mathcal{H}$.

The $\C^\times$-family of flat non-abelian Hodge (or $tt^*$)-connections can be explicitly given in terms of the previously defined entities. We denote the frame, which we have constructed in Proposition \ref{hermetric}, by 
\begin{equation*}
\omega_{\mathrm{NAH}}=(\omega_0\,,\omega_1\,,\omega_2)\,,
\end{equation*}
where $\omega_1=(\nabla_u - D_u)\omega_0$ and $\omega_2=(\nabla_u - D_u)\omega_1$.

\begin{prop}\label{NAHconnection}
The Higgs field $\varphi \in \mathrm{End}(H^{2}(X_u,\mathbb{C})) \otimes T^* \mathcal{B}$ in the holomorphic frame $\omega_{\mathrm{NAH}}$ is
\begin{equation} \label{eq:Higgsinex}
\varphi= \left(  \begin{array}{ccc}  0&0&0 \\ 1&0&0 \\ 0&1&0 \end{array} \right) \,
\end{equation}
so that the following relationship is satisfied:
\begin{equation}
(\nabla_u - D_u) \,\omega_{\mathrm{NAH}}\, \de u = \varphi(\omega_{\mathrm{NAH}}).
\end{equation}
The Hermitian metric $h=h_{a\bar{b}}\, dt^a\,d\bar{t}^{\bar{b}}\,, a,b=0,1,2$, defined in \eqref{eq:hinex}, in the frame $\omega^\hbar_{\mathrm{NAH}}$ is 
\begin{equation} \label{eq:hinexzeta}
(h_{a\bar{b}})= \left(\begin{array}{ccc} 
e^{-K} &0 &0 \\
0& \,e^{-K}G_{u\overline{u}} \de u \de \ubar  & 0 \\
0 &0 & \,e^{-K} G_{u\bar{u}} G_{u\bar{u}}   \de u^2 \de \ubar^2
\end{array} \right) \,.
\end{equation}
Thus the adjoint of the Higgs field with respect to $h$ 
is:
\begin{equation} \label{eq:HiggsTinex} 
\varphi^{\dagger}=  \left(  \begin{array}{ccc}  0&G_{u\bar{u}} \de u \de \ubar &0 \\ 0&0&G_{u\bar{u}}  \de u \de \ubar \\ 0&0&0 \end{array} \right)\,,
\end{equation}
so that the following relationship holds:
\begin{align}
&(\nabla_{\bar{u}} - D_{\bar{u}})\,  \omega_{\mathrm{NAH}}  \,\de \bar{u} = \varphi^{\dagger} (\omega_{\mathrm{NAH}})  \,. 
\end{align}
This gives a solution of Hitchin's equations (and the $tt^*$-equations)
\begin{equation}
\left[ D_u , D_{\bar{u}}\right] = -\left[ \varphi, \varphi^{\dagger}\right]\,, \quad D_{\bar{u}} \varphi=0\,,
\end{equation}
hence  we get the family of flat non-abelian Hodge ($tt^*$)-connections:
\begin{equation}\label{eq:nahconns}
\nabla_{\mathrm{NAH}}^{\zeta} =   \frac{1}{\zeta}\,\varphi + D+ \zeta\, \varphi^{\dagger}\,, \quad \zeta \in \C^{\times}\,,
\end{equation}
where 
$$ D= \de + h^{-1} \partial_u h \,\de u.$$
\end{prop}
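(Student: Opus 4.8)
The plan is to read all four assertions off the construction of the frame $\omega_{\mathrm{NAH}}$ and off Proposition~\ref{hermetric}, and then to deduce flatness of the $\zeta$-family from flatness of the Gau\ss--Manin connection. First I would fix the Higgs field. By Griffiths transversality the $(1,0)$-part $\nabla_u - D_u$ of $\nabla - D$ lowers the Hodge type by one, while the Chern connection $D$ preserves it; hence $\nabla_u - D_u$ is precisely the Higgs field. The frame was built by iterating this operator: $\omega_1 = (\nabla_u - D_u)\omega_0\,\de u$, $\omega_2 = (\nabla_u - D_u)\omega_1\,\de u$, and \eqref{VHSclosure} gives $(\nabla_u - D_u)\omega_2 = 0$. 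Read as the columns of a matrix in the frame $(\omega_0,\omega_1,\omega_2)$, these three relations are exactly \eqref{eq:Higgsinex} together with the identity $(\nabla_u - D_u)\omega_{\mathrm{NAH}}\,\de u = \varphi(\omega_{\mathrm{NAH}})$. The metric \eqref{eq:hinexzeta} is then nothing but \eqref{eq:hinex} of Proposition~\ref{hermetric}, rewritten via \eqref{eq:Kahlerinex} so that $h_{2\bar 2} = e^{-K}G_{u\bar u}^2\,\de u^2\,\de\bar u^2$; its diagonality is exactly the $h$-orthogonality of the Hodge pieces.

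Next I would produce $\varphi^\dagger$. Because $h$ is diagonal and $\varphi$ is the nilpotent shift, the $h$-adjoint $\varphi^\dagger = h^{-1}\bar\varphi^{\,T}h$ has as its only nonzero entries $(\varphi^\dagger)_{01} = h_{1\bar1}/h_{0\bar0}$ and $(\varphi^\dagger)_{12} = h_{2\bar2}/h_{1\bar1}$, both equal to $G_{u\bar u}\,\de u\,\de\bar u$, which is \eqref{eq:HiggsTinex}. The substantive point is to identify this $h$-adjoint with the type-raising $(0,1)$-part $\nabla_{\bar u} - D_{\bar u}$. For this I would use the reality of the VHS, by which the type-raising operator is the complex conjugate of the type-lowering one, $(\nabla_{\bar u} - D_{\bar u})\bar s = \overline{(\nabla_u - D_u)s}$, together with the conjugate relations already derived in Proposition~\ref{hermetric}, namely $\omega_2 = \overline{\omega_0}\,c_{uu}e^{K}\,\de u^2$ and $\omega_1 = \overline{\omega_1}\,e^{K}G^{u\bar u}c_{uu}\,\de u\otimes\tfrac{\partial}{\partial\bar u}$. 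Substituting these into the conjugate of the Higgs relation and simplifying with \eqref{eq:Kahlerinex} yields $(\nabla_{\bar u} - D_{\bar u})\omega_2 = G_{u\bar u}\,\de u\,\omega_1$ and $(\nabla_{\bar u} - D_{\bar u})\omega_1 = G_{u\bar u}\,\de u\,\omega_0$, while $\omega_0$ is annihilated since it already sits in the top Hodge piece. After multiplying by $\de\bar u$ this is precisely $(\nabla_{\bar u} - D_{\bar u})\omega_{\mathrm{NAH}}\,\de\bar u = \varphi^\dagger(\omega_{\mathrm{NAH}})$.

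Finally, with $\nabla = D + \varphi + \varphi^\dagger$ split into its type-lowering $(1,0)$, type-preserving, and type-raising $(0,1)$ parts, Hitchin's equations follow from $\nabla^2 = 0$, which holds because $\nabla$ is the Gau\ss--Manin connection. Since $\dim_{\C}\mathcal{B} = 1$ there are no nonzero $(2,0)$- or $(0,2)$-forms, so separating $\nabla^2 = 0$ by bidegree and Hodge-shift leaves only the type-lowering $(1,1)$-part, which gives $D_{\bar u}\varphi = 0$, and the type-preserving $(1,1)$-part, which gives $F_D + [\varphi,\varphi^\dagger] = 0$, i.e. $[D_u,D_{\bar u}] = -[\varphi,\varphi^\dagger]$. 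Grouping $(\nabla^\zeta_{\mathrm{NAH}})^2$ by powers of $\zeta$ then reproduces exactly these two equations while the remaining coefficients vanish identically, so $\nabla^\zeta_{\mathrm{NAH}} = \zeta^{-1}\varphi + D + \zeta\varphi^\dagger$ is flat for every $\zeta\in\C^\times$. The main obstacle is the identification in the preceding step: that the $(0,1)$-part of $\nabla - D$ is genuinely the metric adjoint $\varphi^\dagger$ rather than some unrelated raising operator. This is where the polarization enters decisively, through the conjugate relations of Proposition~\ref{hermetric} that encode the Riemann--Hodge bilinear relations, and it is what upgrades the tautological flatness of $\nabla$ into a bona fide solution of Hitchin's (and the $tt^*$-) equations.
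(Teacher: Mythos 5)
Your proposal is correct, and its first half --- reading \eqref{eq:Higgsinex} off the defining relations \eqref{eq:omega1}, \eqref{eq:omega2}, \eqref{VHSclosure}, and computing $\varphi^{\dagger}=h^{-1}\overline{\varphi}^{T}h$ entrywise from the diagonal metric --- is exactly what the paper does. You diverge from the paper in two places, both in the direction of making the argument self-contained. First, the paper does not verify $(\nabla_{\bar u}-D_{\bar u})\,\omega_{\mathrm{NAH}}\,\de\bar u=\varphi^{\dagger}(\omega_{\mathrm{NAH}})$ inside the proof at all: it delegates the identification of the type-raising part of $\nabla$ with the metric adjoint to the general theory of \S\ref{s:VHSHiggs} (Lemma \ref{l:orth}), whereas you prove it concretely from the reality of the VHS and the conjugate relations $\omega_2=\overline{\omega_0}\,c_{uu}e^{K}\de u^2$ and $\omega_1=\overline{\omega_1}\,e^{K}G^{u\bar u}c_{uu}\,\de u\otimes\tfrac{\partial}{\partial\bar u}$ combined with \eqref{eq:Kahlerinex}; this is a correct concrete instance of Lemma \ref{l:orth}. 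Second, and more substantially, for Hitchin's equations the paper argues computationally: it reduces $[D_u,D_{\bar u}]=-[\varphi,\varphi^{\dagger}]$ to the conditions $G_{u\bar u}=\partial_u\partial_{\bar u}K$ and the Liouville-type equation \eqref{eq:logG}, and then derives \eqref{eq:logG} by applying $\partial_{\bar u}$ to the differential ring relation \eqref{eq:diffring1} --- a forward reference to Proposition \ref{diffring}, whose proof is independent of the present statement. You instead obtain both equations conceptually from flatness of the smooth Gau\ss--Manin connection $\nabla=D+\varphi+\varphi^{\dagger}$, separating $\nabla^2=0$ by bidegree and Hodge shift; this is Simpson's argument, which the paper records only later and in general form as Lemma \ref{l:D}. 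Your route buys independence from the differential ring relations and works verbatim for any polarized VHS over a curve; the paper's route buys the explicit identification of Hitchin's equations with the constant-curvature equation for $G_{u\bar u}$ and its integrated form \eqref{eq:diffring1}, which is precisely the link to the differential rings exploited in Proposition \ref{prop:gauge} and the remark following Proposition \ref{diffring}.
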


For the general setup of this proposition, see Section \ref{s:VHSHiggs}. 

\begin{proof}
The basis elements $\omega_1$ and $\omega_2$ are defined in \eqref{eq:omega1} and \eqref{eq:omega2} in such a way that the difference of the Gau\ss--Manin and the Chern connection is
\begin{equation}
 (\nabla_u -D_u ) \left(\omega_0\,, \omega_1\,, \omega_2 \right) \de u= \left(   \omega_0\,,   \omega_1\,,  \omega_2  \right)\left(  \begin{array}{ccc} 0&0&0 \\ 1&0&0 \\ 0&1&0  \end{array}\right) \,.
\end{equation}
The matrix on the right hand side of this equation is the Higgs field in \eqref{eq:Higgsinex}. 
We then compute $\varphi^{\dagger} = h^{-1} \overline{\varphi}^T h$ for $h$ defined in \eqref{eq:hinexzeta}. The result matches with the expression in \eqref{eq:HiggsTinex}. 
The equation $D_{\ubar} \varphi = 0$ imposes no equations on the function $K$ and $G_{u \ubar}$ featured in $h$. Meanwhile, the equation $\left[D_u, D_\ubar\right] = - \left[\varphi, \varphi^\dagger \right]$ imposes that $G_{u \ubar}= \del_u \del_{\ubar} K$ and that $G_{u \ubar}$ satisfies
\begin{equation} \label{eq:logG}
    \del_u \del_{\ubar} \log G_{u \ubar} = G_{u \ubar}.
\end{equation}
The differential ring relation in \eqref{eq:diffring1} is  $\del_u \log G_{u \ubar} - \del_u K = - \frac{1}{3\hbar} b_2$. Taking $\del_\ubar$ of both sides in the first differential ring relation \eqref{eq:diffring1} and using that $\del_{\ubar} b_2=0$, we get \eqref{eq:logG}.
\end{proof}

We next construct a one--parameter family $\nabla_{\mathrm{GM}}^\hbar$ of flat connections, $\hbar\in \C^*$, from $\nabla_{\mathrm{GM}}$.
It will be explicitly related to $\nabla_{\mathrm{NAH}}^\zeta$ later on. 
For the construction, consider a solution $\omega_0^\hbar$ of the $\hbar$-deformed Picard--Fuchs equations
\begin{equation}\label{eq:PFK3hbar}
\hbar^3 \nabla^3_{\frac{\partial}{\partial u}} \omega_0^\hbar(u) =  -b_2\, \hbar^2 \nabla^2_{\frac{\partial}{\partial u}} \omega_0^\hbar(u)-b_1 \,\hbar \nabla_{ \frac{\partial}{\partial u}} \omega_0^\hbar(u)  -b_0 \,\omega_0^\hbar(u)
\end{equation}
for any $\hbar\in \C^*$.
Then we consider the frame
\begin{equation}\label{eq:omegaGMhbar}
\omega^{\hbar}_{\mathrm{GM}} := \left( \omega_0^\hbar\,, \hbar \nabla_u \omega_0^\hbar\, \de u \,, \hbar^2 (\nabla_u)^2 \omega_0^\hbar \, \de u^2 \right)\,.
\end{equation}
In this frame we define the connection
\begin{equation}\label{eq:GMopers}
\nabla^{\hbar}_{\mathrm{GM}} = \de +\frac{1}{\hbar}\left( \begin{array}{ccc} 0&0&- b_0 \\
1&0&- b_1\\
0&1&- b_2
\end{array}\right) \de u
\end{equation}
where $b_i = b_i(u)$ as before.

\begin{rem}
The study of the asymptotics in $\hbar$ of the periods of the $\hbar$-deformed $\omega_0^\hbar$ is subject of the higher rank WKB method, see e.g.~\cite{Hollands:2019wbr} and references therein.
\end{rem}

We note furthermore that the differential equation \eqref{eq:gyeq} obeyed by the coordinate expression of the Griffiths--Yukawa coupling becomes:
\begin{equation}\label{eq:gyeqhbar}
\partial_u c_{uu} = -\frac{2}{3\hbar} b_2 \,c_{uu}\,,
\end{equation}
after using the $\hbar$-deformed Picard-Fuchs equation \eqref{eq:PFK3hbar}.

\subsection{The differential ring relations}\label{sec:differentialring}
We next derive three differential ring relations between functions associated to the K\"{a}hler metric $G_{u\bar{u}}\, \de u\, \de \bar{u}$ and the coefficient functions of the Picard--Fuchs equation in \eqref{eq:pfK3}.
This will be very useful to explicitly relate the families $\nabla_{\mathrm{NAH}}^\zeta$ and $\nabla_{\mathrm{GM}}^\hbar$ of flat connections. 
At the end of this subsection we comment on how these differential ring relations are related to the ones found in \cite{Alim:2014vea}.  

\begin{prop}\label{diffring}
Let $\hbar\in\mathbb{C}\,,K_u := \partial_u K\,,\Gamma_{uu}^u:=G^{u\bar{u}} \partial_u G_{u\bar{u}} $, $\omega^\hbar_0 \in H^{2,0}(X_u,\mathbb{C})$ and $b_i(u) \,, i=0,1,2$ the coefficients of the $\hbar$-deformed Picard--Fuchs equation describing the variation of Hodge structure \eqref{eq:PFK3hbar}. The following relations hold:
\begin{enumerate}
\item
\begin{equation}\label{eq:diffring1}
\Gamma_{uu}^u - K_u = -\frac{1}{3\hbar} b_2\,, \\
\end{equation}
\item
\begin{equation}\label{eq:diffring2}
 \partial_u K_u = \frac{1}{2} K_u^2 -\frac{1}{3\hbar} b_2 K_u -\frac{1}{9\hbar^2} b_2^2 -\frac{1}{6\hbar} \partial_u b_2 +\frac{1}{2\hbar^2} b_1\,, \\
\end{equation}
\item
\begin{equation}\label{eq:diffring3}
\hbar^2 \,\partial_u^2 b_2 +2 b_2 \hbar\, \partial_u  b_2 -3 \hbar\,\partial_u b_1 +\frac{4}{9} b_2^3 - 2 b_1 b_2 + 6 b_0 =0\,.
\end{equation}
\end{enumerate}
\end{prop}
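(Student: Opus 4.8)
The plan is to compute the first three Gau\ss--Manin derivatives of $\omega_0^\hbar$ in the Hodge-adapted frame $(\omega_0,\omega_1,\omega_2)$ of \eqref{eq:omega1}--\eqref{eq:omega2}, substitute the $\hbar$-deformed Picard--Fuchs equation \eqref{eq:PFK3hbar}, and match the coefficients of $\omega_0,\omega_1,\omega_2$. Concretely I take $\omega_0:=\omega_0^\hbar$, set $e^{-K}:=h(\omega_0,\omega_0)$, $G_{u\bar u}:=\partial_u\partial_{\bar u}K$, and $\omega_{k+1}:=(\nabla_u-D_u)\omega_k$, so that the metric computation of Proposition~\ref{hermetric} applies verbatim and accounts for the $\hbar$-dependence of $K$. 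Two ingredients are then needed: the Chern connection coefficients and the closure relation. From the diagonal Hodge metric $h_{0\bar 0}=e^{-K}$, $h_{1\bar 1}=e^{-K}G_{u\bar u}$, $h_{2\bar 2}=e^{-K}G_{u\bar u}^2$ of Proposition~\ref{hermetric}, the logarithmic derivatives $h_{k\bar k}^{-1}\partial_u h_{k\bar k}$ give
\begin{equation*}
D_u\omega_0=-K_u\,\omega_0,\quad D_u\omega_1=(-K_u+\Gamma^u_{uu})\,\omega_1,\quad D_u\omega_2=(-K_u+2\Gamma^u_{uu})\,\omega_2,
\end{equation*}
while \eqref{VHSclosure} provides the closure $(\nabla_u-D_u)\omega_2=0$, since the Higgs field annihilates the bottom piece $H^{0,2}$.

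Writing $\nabla_u=D_u+(\nabla_u-D_u)$ these combine into the structure equations
\begin{align*}
\nabla_u\omega_0&=-K_u\,\omega_0+\omega_1,\\
\nabla_u\omega_1&=(-K_u+\Gamma^u_{uu})\,\omega_1+\omega_2,\\
\nabla_u\omega_2&=(-K_u+2\Gamma^u_{uu})\,\omega_2.
\end{align*}
Iterating, I would express $\nabla_u^2\omega_0$ and $\nabla_u^3\omega_0$ as $\mathcal{O}_{\mathcal{B}}$-linear combinations of $\omega_0,\omega_1,\omega_2$ whose coefficients are polynomial in $K_u$, $\Gamma^u_{uu}$ and their $u$-derivatives. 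Dividing \eqref{eq:PFK3hbar} by $\hbar^3$ puts the Picard--Fuchs relation in the form $\nabla_u^3\omega_0=-\tfrac{b_2}{\hbar}\nabla_u^2\omega_0-\tfrac{b_1}{\hbar^2}\nabla_u\omega_0-\tfrac{b_0}{\hbar^3}\omega_0$, so that all $\hbar$-dependence enters uniformly through the substitution $b_k\mapsto b_k/\hbar^{\,3-k}$.

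Matching the coefficient of $\omega_2$ gives $-3K_u+3\Gamma^u_{uu}=-b_2/\hbar$, which is \eqref{eq:diffring1}. Using this to eliminate $\Gamma^u_{uu}$, the coefficient of $\omega_1$ reduces to a first-order relation for $\partial_u K_u$, namely \eqref{eq:diffring2}. Substituting both \eqref{eq:diffring1} and \eqref{eq:diffring2} into the coefficient of $\omega_0$ then produces \eqref{eq:diffring3}.

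The step I expect to require the most care is this last one. A priori the $\omega_0$-coefficient involves $K_u$, $\partial_u K_u$ and $\partial_u^2 K_u$ (the latter appearing through $\partial_u(K_u^2-\partial_u K_u)$), but after inserting \eqref{eq:diffring1}--\eqref{eq:diffring2} every cubic, quadratic and linear term in $K_u$ must cancel, leaving the closed relation \eqref{eq:diffring3} among the $b_i$ alone. This cancellation is forced by consistency: the structure equations already determine the flat Gau\ss--Manin connection, and since $K_u$ is fixed only by the special-geometry equation \eqref{eq:logG} rather than by the holomorphic Picard--Fuchs data, the surviving identity can involve the $b_i$ only. Verifying it amounts to a careful bookkeeping of these $K_u$-powers, and it is the one genuinely nontrivial computation in the argument.
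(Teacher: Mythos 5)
Your proposal is correct, and it reaches the three relations by a genuinely different (though gauge-equivalent) route from the paper's own proof. The paper splits the derivation in two: relation \eqref{eq:diffring1} is obtained by differentiating the K\"ahler-type identity \eqref{eq:Kahlerinex} of Proposition \ref{hermetric} and inserting the Griffiths--Yukawa equation \eqref{eq:gyeqhbar}, while relations \eqref{eq:diffring2}--\eqref{eq:diffring3} come from the closure relation $\hbar^3(\nabla_u-D_u)\omega_2^\hbar=0$, rewritten (using \eqref{eq:diffring1}) as the factored operator identity
\begin{equation*}
\left(\hbar\nabla_u-\hbar K_u+\tfrac{2}{3}b_2\right)\left(\hbar\nabla_u+\tfrac{1}{3}b_2\right)\left(\hbar\nabla_u+\hbar K_u\right)\omega_0^\hbar=0,
\end{equation*}
expanded in the cyclic frame $(\omega_0,\hbar\nabla_u\omega_0,\hbar^2\nabla_u^2\omega_0)$ after substituting the Picard--Fuchs equation. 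You instead iterate the structure equations in the Hodge frame $(\omega_0,\omega_1,\omega_2)$ and match all three coefficients there: the $\omega_2$-coefficient yields \eqref{eq:diffring1}, the $\omega_1$-coefficient yields \eqref{eq:diffring2}, and the $\omega_0$-coefficient yields \eqref{eq:diffring3}. The two computations are conjugate under the triangular gauge transformation $\mathcal{A}^\hbar$ of Proposition \ref{prop:gauge}, so neither is more general; what yours buys is uniformity (one coefficient-matching argument produces all three relations) and economy (you never need the coupling $c_{uu}$, the identity \eqref{eq:Kahlerinex}, or its $\hbar$-deformed ODE \eqref{eq:gyeqhbar}), whereas the paper's route exposes the geometric origin of \eqref{eq:diffring1} as a K\"ahler/Griffiths--Yukawa identity and, having it in hand, works with a pre-simplified factorized operator so that fewer terms appear in the expansion.

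One caveat: your final step, the cancellation of all $K_u$-, $K_u^2$-, $K_u^3$- and $\partial_u K_u$-terms in the $\omega_0$-coefficient after inserting \eqref{eq:diffring1}--\eqref{eq:diffring2}, is asserted via a consistency heuristic ("$K_u$ is not holomorphic data") rather than proved; as stated that heuristic is not a rigorous argument, since nothing a priori prevents the residual equation from constraining $K_u$. However, the cancellation does occur --- carrying out the bookkeeping you describe, the residue is exactly $\tfrac{4}{9}b_2^3+2\hbar b_2\partial_u b_2-2b_1b_2+\hbar^2\partial_u^2 b_2-3\hbar\partial_u b_1+6b_0=0$ after clearing $6\hbar^3$ --- and this computation is precisely the paper's own final step of substituting \eqref{eq:diffring2} into the $\omega_0^\hbar$-coefficient. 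So the gap is one of presentation, not of substance.
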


\begin{proof}
The first relation follows from the relation \eqref{eq:Kahlerinex}, proven in Proposition \ref{hermetric}:
\begin{equation} e^{-K} \, G_{u\bar{u}} = c_{uu}\, e^{K}\, G^{u\bar{u}} \,\overline{c_{uu}} \,,
\end{equation}
by taking the derivative with respect to $u$ and then using the differential equation satisfied by the Griffiths--Yukawa coupling in \eqref{eq:gyeqhbar}.
We obtain the second and third relations by considering \eqref{VHSclosure}:
\begin{equation}
\hbar^3\,(\nabla_u -D_u) \omega_2^\hbar =0 \,.
\end{equation}
This innocent looking equation turns out to be very rich once its ingredients are spelled out, using the first relation as well as the explicit coordinate expressions for the Chern connection components we obtain the following equation:

\begin{align}
&\hbar^3 \left( \nabla_u + K_u - 2 \Gamma_{uu}^u \right) \left(\nabla_u + K_u -\Gamma_{uu}^u\right) \left(\nabla_u + K_u\right) \omega_0^\hbar =0 \,, \\
&\left( \hbar \nabla_u -\hbar K_u +\frac{2}{3} b_2 \right) \left(\hbar\nabla_u + \frac{1}{3} b_2\right) \left(\hbar\nabla_u + \hbar K_u\right) \omega_0^\hbar =0 \,,
\end{align}
in this equation we substitute for $\hbar^3 \nabla_u^3 \omega_0$ using the $\hbar$-deformed Picard--Fuchs equation \eqref{eq:PFK3hbar} and obtain three independent equations in cohomology, namely the coefficients of $\hbar^2 \nabla_u^2 \omega_0^\hbar, \hbar \nabla_u \omega_0^\hbar$ and of $\omega_0^\hbar$. The coefficient of $\hbar^2 \nabla_u^2 \omega_0^\hbar$ is zero by the algebraic equations, which were shown to hold in the first part of the proposition. The vanishing of the coefficient of $\hbar \nabla_u \omega_0^\hbar$ gives the second relation of this proposition. Substituting this relation into the coefficient of $\omega_0^\hbar$ gives the third part of the proposition. 
\end{proof}

\begin{rem} \hfill
\begin{itemize}
\item Note that first relation can be derived from the second relation by taking $\del_{\ubar}$ of both sides, and then dividing by $G_{u \ubar}$.  Similarly, by taking $\del_{\ubar}$ of the first relation we get $\del_{u} \del_{\ubar} \log G_{u \ubar} = G_{u \ubar}$. This means that $G_{u \ubar} \de u \de \ubar$ is a metric of constant curvature $-2$\footnote{Recall that the curvature of a metric $G_{u \ubar} \de u \de \ubar$ on a Riemann surface is $-\frac{2}{G_{u \ubar}} \del_{u} \del_{\ubar} \log G_{u \ubar}$.} on the Riemann surface $\mathcal{B}=\mathbb{P}^1-\{0, 1, \infty\}$.

\item The function $K$ is a K\"ahler potential for the metric $G$ determined by the choice of $\omega_0^\hbar$ by \eqref{eq:h00}. Given a choice of $\omega_0$, the coefficients of the Picard--Fuchs equation are determined and the differential ring gives algebraic relations between $K,\partial_u K$ and $\partial_u^2 K$ encoded in \eqref{eq:diffring2}. Different choices of $\omega_0^\hbar$ result in different potentials $K$, namely rescaling $\omega_0^\hbar$ by $\omega_0 \mapsto f \omega_0$ shifts the potential $K \mapsto K +\log |f|^2$.  The functions  $b_0, b_1, b_2$ change accordingly.
We note that the third relation doesn't feature the metric $G_{u \ubar} \de u \de \ubar$ at all.

\item We note here that the first and second parts of this proposition were derived in a different manner in \cite{Alim:2014vea}, namely by using the explicit expressions for the curvature of the Chern connection in terms of the Higgs field. The outcome in \cite{Alim:2014vea} are differential ring relations which are determined up to rational functions on the moduli space which are determined case-by-case. This is due to the fact that for obtaining the relations in \cite{Alim:2014vea} a successive integration of the $\partial_{\bar{u}}$ was used, which yields relations up to holomorphic functions. In the current derivation, the holomorphic functions are determined by using the Picard--Fuchs equation. As a qualitative example highlighting the difference of the approach of \cite{Alim:2014vea} to our current approach we note that the resulting Hitchin equation of the current setup \eqref{eq:logG}:
\begin{equation*} 
    \del_u \del_{\ubar} \log G_{u \ubar} = G_{u \ubar}\,,
\end{equation*}
can be integrated to give:
\begin{equation*}
    \Gamma_{uu}^u - K_u = \textrm{hol}\,,
\end{equation*}
where $\textrm{hol}$ is an undetermined holomorphic function, which is in the kernel of $\partial_{\bar{u}}$. This is in contrast to the relations in cohomology which use the Picard--Fuchs equations and have no undetermined data. In this example $\textrm{hol}(u)=-\frac{1}{3\hbar}b_2$. Moreover the third relation is completely novel to the current work. It is a constraint on the coefficients of the Picard--Fuchs equation which we interpret as the constraint that this Picard--Fuchs equation describes a polarized variation of Hodge structures.  
\end{itemize}

\end{rem}

\subsection{Gauge transformation and conformal limit}\label{sec:gauge}
Using the differential ring relations, we next give an explicit gauge transformation between the non-abelian Hodge connection $\nabla_{\mathrm{NAH}}^\hbar$ and the oper $\nabla_{\mathrm{GM}}^\hbar$ for any $\hbar\in \C^\times$.
As an application we determine the $\lambda$-conformal limit (\cite{Gaiotto:2014bza}, \cite{Dumitrescu2016a}) for any $\lambda \in \C^\times$ associated to the family $\nabla_{\mathrm{NAH}}^\zeta$ of non-abelian Hodge flat connections. 
For the next proposition, let $\omega_{\mathrm{NAH}}^\hbar = (\omega_0^\hbar, \varphi_u(\omega_0^\hbar), \varphi_u^2(\omega_0^\hbar))$.

\begin{prop}\label{prop:gauge}
For any $\hbar\in \C^\times$ fix $\omega_0^\hbar$ solving the $\hbar$-deformed Picard--Fuchs equation in cohomology (cf. \eqref{cohPF}).
Then $\nabla^\hbar_{\mathrm{NAH}} = \frac{1}{\hbar} \varphi  + D + \hbar \varphi^{\dagger}$ is gauge equivalent to $\nabla^{\hbar}_{\mathrm{GM}}$.
Concretely, let $\omega_{GM}^\hbar$ be the frame \eqref{eq:omegaGMhbar} of $\mathcal{H}^\hbar$.  
Define the gauge
transformation $\mathcal{A}^{\hbar}$ satisfying
\begin{equation}
  \omega_{\mathrm{GM}}^{\hbar}=  \omega_{\mathrm{NAH}}^{\hbar}\mathcal{A}^{\hbar}\,,
\end{equation}
which is concretely given by
\begin{equation}\label{eq:gauge}
(\mathcal{A}^{\hbar})^{-1}=\begin{pmatrix} 1& \hbar \,K_u\, \de u & \hbar^2 B_{uu} \de u^2 \\
0&1 & \hbar K_u +\frac{1}{3} b_2 \\
0&0& 1
\end{pmatrix}\,,
\end{equation}
with $$B_{uu}= \frac{1}{2} K_u^2 -\frac{1}{9\hbar^2} b_2^2 -\frac{1}{6\hbar}\partial_u b_2 + \frac{1}{2\hbar^2} b_1 \overset{\eqref{eq:diffring2}}{=} \del_{u} K_u + \frac{1}{3\hbar} b_2 K_u \,.$$
Then the gauge transformation $\mathcal{A}^{\hbar}$ transforms the $\hbar$-non-abelian Hodge flat connection $\nabla_{\mathrm{NAH}}^\hbar$ into the oper $\nabla_{\mathrm{GM}}^\hbar$, i.e.
\begin{equation}\label{eq:gaugetrafo}
\nabla^{\hbar}_{\mathrm{GM}} =  (\mathcal{A}^{\hbar})^{-1} \circ \nabla^{\hbar}_{\mathrm{NAH}} \circ \mathcal{A}^{\hbar}.
\end{equation}
\end{prop}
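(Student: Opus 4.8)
The plan is to verify the conjugation identity \eqref{eq:gaugetrafo} as an equality of matrix-valued one-forms. Writing $A_{\mathrm{NAH}}$ for the connection one-form of $\nabla^\hbar_{\mathrm{NAH}}=\frac1\hbar\varphi+D+\hbar\varphi^{\dagger}$ in the frame $\omega^\hbar_{\mathrm{NAH}}$ (with $\varphi$, $\varphi^{\dagger}$ and $D=\de+h^{-1}\partial_u h\,\de u$ given explicitly in Proposition \ref{NAHconnection}) and $A_{\mathrm{GM}}$ for the oper matrix of \eqref{eq:GMopers}, the claim \eqref{eq:gaugetrafo} is the standard gauge-transformation law
\begin{equation*}
A_{\mathrm{GM}}=(\mathcal{A}^\hbar)^{-1}A_{\mathrm{NAH}}\,\mathcal{A}^\hbar+(\mathcal{A}^\hbar)^{-1}\de\mathcal{A}^\hbar .
\end{equation*}
Thus the whole statement is a finite computation with $3\times 3$ matrices, and the real content is that the coefficient functions conspire to reproduce $A_{\mathrm{GM}}$.

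First I would justify the explicit form \eqref{eq:gauge} of $(\mathcal{A}^\hbar)^{-1}$ by computing the change of frame directly. Using $\nabla_u=D_u+\varphi_u$ together with the Chern coefficients $D_u\omega_0^\hbar=-K_u\omega_0^\hbar$ and $D_u(\varphi_u\omega_0^\hbar)=(\Gamma^u_{uu}-K_u)\varphi_u\omega_0^\hbar$ read off from the diagonal metric \eqref{eq:hinexzeta}, I expand $\hbar\nabla_u\omega_0^\hbar$ and $\hbar^2\nabla_u^2\omega_0^\hbar$ in the basis $(\omega_0^\hbar,\varphi_u\omega_0^\hbar,\varphi_u^2\omega_0^\hbar)$, after matching the $\de u$- and $\hbar$-weights that the two frames carry. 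The off-diagonal entries then come out as $\hbar K_u$, $\hbar K_u+\tfrac13 b_2$ and $\hbar^2 B_{uu}$, where the identification of the top entry with $B_{uu}$ is exactly the second differential ring relation \eqref{eq:diffring2}, and the middle entry uses the first relation \eqref{eq:diffring1}.

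Next I would split the matrix identity into its $(1,0)$- and $(0,1)$-parts. For the $(0,1)$-part, since $A_{\mathrm{GM}}$ has no $\de\bar u$-component, I must show that $(\mathcal{A}^\hbar)^{-1}(\hbar\varphi^{\dagger})\mathcal{A}^\hbar+(\mathcal{A}^\hbar)^{-1}\overline{\partial}\mathcal{A}^\hbar$ (together with the antiholomorphic part of $D$) vanishes. The only $\bar u$-dependence in $\mathcal{A}^\hbar$ sits in $K_u$, and $\partial_{\bar u}K_u=G_{u\bar u}$ because $G_{u\bar u}=\partial_u\partial_{\bar u}K$; combined with $\partial_{\bar u}b_i=0$ and relation \eqref{eq:diffring1}, the superdiagonal entries of $\hbar\varphi^{\dagger}$ (which equal $G_{u\bar u}$ by \eqref{eq:HiggsTinex}) are precisely cancelled by $\overline{\partial}\mathcal{A}^\hbar$. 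For the $(1,0)$-part, conjugating the nilpotent $\tfrac1\hbar\varphi$ by the unipotent $\mathcal{A}^\hbar$ preserves the subdiagonal $\tfrac1\hbar$'s of the oper, while the diagonal Chern term and the $\partial$-derivative of $\mathcal{A}^\hbar$ are rotated into the last column; matching this last column with $\tfrac1\hbar(-b_0,-b_1,-b_2)^{T}$ uses \eqref{eq:diffring1} and \eqref{eq:diffring2} for the $-b_2$- and $-b_1$-entries respectively, and the third relation \eqref{eq:diffring3} for the $-b_0$-entry.

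The main obstacle is bookkeeping rather than conceptual: one must keep careful track of the form-degrees $\de u^{\,k}$ and the powers of $\hbar$ that the two frames carry, so that the cancellations land in the correct entries, and one must invoke the third differential ring relation \eqref{eq:diffring3} — the constraint on $b_0,b_1,b_2$ coming from $\hbar^3(\nabla_u-D_u)\omega_2^\hbar=0$ — to produce the top-right entry $-b_0/\hbar$. Since all three relations were already established in Proposition \ref{diffring} and the identity $\partial_{\bar u}K_u=G_{u\bar u}$ is immediate from $G_{u\bar u}=\partial_u\partial_{\bar u}K$, no further input is needed and the verification closes.
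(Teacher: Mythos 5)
Your proposal is correct and follows essentially the same route as the paper's proof: a direct verification of the gauge-transformation law for the $3\times 3$ connection matrices in the frame $\omega^{\hbar}_{\mathrm{NAH}}$, split into $(1,0)$- and $(0,1)$-parts, with the three differential ring relations of Proposition \ref{diffring} together with $G_{u\ubar}=\del_u\del_{\ubar}K$ supplying exactly the cancellations you describe (in particular \eqref{eq:diffring3} producing the $-b_0/\hbar$ entry). Your preliminary derivation of the explicit form of $(\mathcal{A}^{\hbar})^{-1}$ from the change of frame, using \eqref{eq:diffring1} for the middle entry and \eqref{eq:diffring2} to identify the top entry with $B_{uu}$, is a detail the paper leaves implicit but is fully consistent with its computation.
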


\begin{proof}
Recall that in the frame $\omega_{GM}^\hbar$, the connection $\nabla_{GM}^\hbar$ is given by \begin{equation}
\nabla^{\hbar}_{\mathrm{GM}} = \de +\frac{1}{\hbar}\left( \begin{array}{ccc} 0&0&- b_0 \\
1&0&- b_1\\
0&1&- b_2
\end{array}\right) \de u \,. 
\end{equation}
Then the proof that $\nabla_{\mathrm{GM}}^{\hbar} = (\mathcal{A}^{\hbar})^{-1}\circ  \nabla_{\mathrm{NAH}}^{\hbar} \circ  \mathcal{A}^{\hbar}$ is a tedious but straightforward computation and follows from Propositions \ref{NAHconnection} and \ref{diffring}.  We give the details to illustrate how the differential ring relations in (\ref{eq:diffring1})-(\ref{eq:diffring3}) and the relation $G_{u\ubar}= \del_u \del_{\ubar} K$ are used.
We compute that 

\begin{equation}
    \left((\mathcal{A}^{\hbar})^{-1}\circ  \nabla_{\mathrm{NAH}}^{\hbar} \circ  \mathcal{A}^{\hbar} -  \nabla^{\hbar}_{\mathrm{GM}}\right)^{(0,1)} = \hbar \begin{pmatrix} 0 & *_{(1,2)} &*_{(1,3)} \\ 0 & 0 & *_{(2,3)}\\ 0 & 0 & 0 \end{pmatrix} \,,
\end{equation}
where
\begin{eqnarray*}
*_{(1,2)}&=& G_{u \ubar} - \del_u \del_{\ubar} K \,,\\
*_{(1,3)}&:=& -\del_{\ubar} B_{uu} + G_{u \ubar} K_u\,,\\
*_{(2,3)} &:=& G_{u \ubar} - \del_u \del_{\ubar} K\,.
\end{eqnarray*}
This vanishes since $G_{u \ubar} = \del_u \del_{\ubar} K$ by definition, and similarly $\del_{\ubar} B_{uu}:= \del_{\ubar}(\frac{1}{2}K_u^2)$. 
The $(1,0)$-part is given by
\begin{equation}
    \left((\mathcal{A}^{\hbar})^{-1}\circ  \nabla_{\mathrm{NAH}}^{\hbar} \circ  \mathcal{A}^{\hbar}  - \nabla^{\hbar}_{\mathrm{GM}} \right)^{(1,0)} \!\!= \!\begin{pmatrix} 0 &*_{(1,2)} &  *_{(1,3)} \\ 0 & *_{(2,2)}& *_{(2,3)} \\ 0 & 0 & *_{(3,3)} \end{pmatrix}\!\!,
\end{equation}
where
\begin{eqnarray*}
    *_{(2,2)}&:=&\frac{1}{3\hbar} b_2 + \Gamma^{u}_{uu} - K_u \\
    *_{(3,3)}&:=&  \frac{2}{3\hbar} b_2 + 2 \Gamma^{u}_{uu} - 2 K_u\\
   *_{(1,2)}&:=& \frac{1}{2\hbar} b_1 - \frac{1}{9\hbar} b_2^2 - \frac{1}{6} \del_u b_2 + \hbar \Gamma^{u}_{uu} K_u - \hbar\frac{1}{2} K_u^2 - \hbar\del_u K_u \\
   *_{(2,3)} &:=& \frac{1}{2\hbar} b_1- \frac{1}{6} \del_u b_2 - \frac{1}{3} b_2 K_u - \frac{\hbar}{2} K_u^2 + \frac{1}{3} \Gamma^u_{uu} b_2 + \hbar\Gamma^u_{uu} K_u - \hbar\del_u K_u \\
   *_{(1,3)} &=&\frac{1}{\hbar}b_0 - \frac{1}{6\hbar} b_1 b_2 + \frac{1}{27\hbar} b_2^3 - \frac{1}{2} \del_u b_1 + \frac{5}{18}b_2 \del_u b_2 + \frac{\hbar}{6} \del_{u}^2 b_2 + b_1 \Gamma^u_{uu} - \frac{2}{9}b_2^2 \Gamma^u_{uu} \\ 
   & &- \frac{\hbar}{3}\del_u b_2 \Gamma^u_{uu} - b_1 K_u + \frac{2}{9} b_2^2 K_u + \frac{\hbar}{3} \del_u b_2 K_u - \frac{\hbar}{3} b_2 \Gamma^u_{uu} K_u + \frac{\hbar}{6} b_2 K_u^2 + \frac{\hbar}{3} b_2 \del_u K_u.
\end{eqnarray*}
These five entries vanish because of the three differential ring relations.
Indeed, the $(2, 2)$-entry and $(3,3)$-entry vanish because $\Gamma^{u}_{uu} - K_u = - \frac{1}{3\hbar} b_2$. Similarly the $(1,2)$-entry vanishes because
\begin{equation*}
    *_{(1,2)} \overset{\eqref{eq:diffring2}}{=} \hbar(-K_u^2 + \Gamma_{uu}^u K_u) + \frac{1}{3} b_2 K_u \overset{\eqref{eq:diffring1}}{=} 0.
\end{equation*}
The $(2,3)$-entry vanishes because of \eqref{eq:diffring1} and \eqref{eq:diffring2}.
The $(1,3)$-vanishes because
\begin{equation*}
    *_{(2,3)} \overset{\eqref{eq:diffring1}, \eqref{eq:diffring2}}{=} \frac{b_0}{\hbar} - \frac{1}{3\hbar} b_1 b_2 + \frac{2}{27\hbar} b_2^3 - \frac{1}{2} \del_u b_1 + \frac{1}{3} b_2 \del_u b_2 + \frac{\hbar}{6} \del^2_{u} b_2  \overset{\eqref{eq:diffring3}}{=} 0.
\end{equation*}
\end{proof}

Having studied the relation between the non--abelian Hodge flat connection  $\nabla_{\mathrm{NAH}}^{\hbar}$ and the oper $\nabla_{\mathrm{GM}}^{\hbar}$, we will now study 
the relation between the one--parameter family of non--abelian Hodge flat connections $\nabla_{\mathrm{NAH}}^{\zeta}$ and the oper $\nabla_{\mathrm{GM}}^{\hbar}$.

Following \cite{Gaiotto:2014bza,Dumitrescu2016a}, we can extend the one--parameter family of $\nabla_{\mathrm{NAH}}^{\zeta}$ to a two--parameter family
\begin{equation} \label{eq:two-param-family}
  \nabla^{\zeta,R}_{\mathrm{NAH}} = \zeta^{-1} R \varphi  + D(h_R) + \zeta R \varphi^{\dagger_{h_R}} ,
\end{equation}
parameterized by $\zeta \in \C^\times$ and $R \in \R^+$.
The Hermitian metric $h_R$ solves the $R$-rescaled Hitchin's equations
\begin{equation}\label{eq:RrescaledHitchin}
F_{D(h_R)}
= -R^2\left[ \varphi, \varphi^{\dagger_{h_R}}\right]\,, \quad D_{\bar{u}} \varphi=0.\,
\end{equation}
The \emph{$\lambda$-conformal limit} of $\nabla_{\mathrm{NAH}}^{\zeta,R}$ is defined by taking $\zeta \to 0 $ and $R\to 0$ but fixing the ratio $\zeta/R$ to be equal to $\lambda\in \C^\times$, in particular $\lambda R = \zeta$. 
Gaiotto conjectured \cite{Gaiotto:2014bza} that $\lim_{R \to 0}\nabla_{\mathrm{\mathrm{NAH}}}^{\lambda R, R}$ is defined and is an oper. 
In this particular case, Proposition \ref{prop:gauge} will imply that the limit is gauge equivalent to 
\begin{equation}
\nabla^{\hbar}_{\mathrm{GM}} = \de +\frac{1}{\hbar}\left( \begin{array}{ccc} 0&0&- b_0 \\
1&0&- b_1\\
0&1&- b_2
\end{array}\right) \de u \,, 
\end{equation}
an oper (cf. Example \ref{ex:opers}) determined by the $\hbar$-deformed Picard--Fuchs equation
\eqref{eq:PFK3hbar}. 

\begin{cor}
Consider the two-parameter family of non--abelian Hodge flat connections $$\nabla^{\zeta,R}_{\mathrm{NAH}}=\frac{R}{\zeta} \varphi + D_R + R \zeta \varphi^{\dagger_{h_R}},$$
where $D_R$ is the Chern connection for the pair $(E, h_R)$.
Then its $\lambda$-conformal limit exists for any $\lambda \in \C^\times$ and is given by
\begin{equation*}
\lim_{R\rightarrow 0} \nabla^{\lambda R,R}_{\mathrm{NAH}}=\nabla^{\lambda}_{\mathrm{NAH}}= \frac{1}{\lambda} \varphi  + D_{h} + \lambda \varphi^{\dagger_{h}},  \qquad h=h_1\,.
\end{equation*}
Moreover,
$g_{\lambda, \hbar}^{-1} \circ \nabla^{\lambda}_{\mathrm{NAH}} \circ g_{\lambda, \hbar} = \nabla^{\hbar}_{\mathrm{NAH}}= \mathcal{A}^\hbar \circ \nabla_{\mathrm{GM}}^\hbar \circ (\mathcal{A}^\hbar)^{-1}$ where
\begin{equation}
 g_{\lambda, \hbar}= \begin{pmatrix} 1 & 0 & 0 \\ 0 & \hbar \lambda^{-1} & 0 \\0 & 0 & \hbar^2\lambda^{-2} \end{pmatrix},\,
\end{equation}
i.e. the $\lambda$-conformal limit is gauge equivalent to $\nabla_{GM}^\hbar$ for any $\hbar\in \C^\times$.
\end{cor}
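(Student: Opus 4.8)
The plan is to reduce both assertions to the $\C^\times$-equivariance of the harmonic bundle, after which everything becomes a short matrix computation in the frame $\omega_{\mathrm{NAH}}$.

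First I would pin down the $R$-dependence of the harmonic metric $h_R$. The Higgs bundle $(E,\varphi)$ is a fixed point of the scaling $\varphi\mapsto t\varphi$ via the constant diagonal gauge $g_s:=\mathrm{diag}(1,s,s^2)$, since $g_s^{-1}\varphi g_s=s^{-1}\varphi$. I therefore expect $h_R$ to be an algebraic rescaling of $h=h_1$ rather than the solution of a genuinely new PDE, and I would verify this directly by taking
$$h_R=\mathrm{diag}\!\left(e^{-K},\,R^{-2}e^{-K}G_{u\bar u},\,R^{-4}e^{-K}G_{u\bar u}^2\right).$$
Because $R$ is constant in $u$, the Chern connection form $h_R^{-1}\partial_u h_R$ coincides with $h^{-1}\partial_u h$, so $D(h_R)=D(h)=:D$; a one-line evaluation of $h_R^{-1}\overline{\varphi}^{T}h_R$ gives $\varphi^{\dagger_{h_R}}=R^{-2}\varphi^{\dagger_h}$. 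Substituting these into the $R$-rescaled Hitchin equation \eqref{eq:RrescaledHitchin} and using that $h_1$ already solves Hitchin's equations (Proposition \ref{NAHconnection}), the powers of $R$ cancel: $F_{D(h_R)}=F_{D}=-[\varphi,\varphi^{\dagger_h}]=-R^2[\varphi,\varphi^{\dagger_{h_R}}]$, while $D_{\bar u}\varphi=0$ is automatic. Hence this $h_R$ is the required solution.

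With $h_R$ in hand the two-parameter family collapses. Inserting $D(h_R)=D$ and $\varphi^{\dagger_{h_R}}=R^{-2}\varphi^{\dagger_h}$ into \eqref{eq:two-param-family} gives
$$\nabla^{\zeta,R}_{\mathrm{NAH}}=\tfrac{R}{\zeta}\varphi+D+\tfrac{\zeta}{R}\varphi^{\dagger_h}=\nabla^{\zeta/R}_{\mathrm{NAH}},$$
so the two-parameter family depends on $(\zeta,R)$ only through the ratio $\zeta/R$. Specializing to $\zeta=\lambda R$ yields $\nabla^{\lambda R,R}_{\mathrm{NAH}}=\nabla^{\lambda}_{\mathrm{NAH}}$ for every $R\in\R^+$; the $\lambda$-conformal limit is thus literally constant in $R$ and equals $\nabla^{\lambda}_{\mathrm{NAH}}$, which proves the first assertion. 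For the gauge statement I would then record how $g_s$ moves the spectral parameter: since $g_s$ is constant and diagonal, $g_s^{-1}Dg_s=D$, while $g_s^{-1}\varphi g_s=s^{-1}\varphi$ and $g_s^{-1}\varphi^{\dagger_h}g_s=s\,\varphi^{\dagger_h}$, whence $g_s^{-1}\nabla^{\zeta}_{\mathrm{NAH}}g_s=\nabla^{\zeta s}_{\mathrm{NAH}}$. Taking $\zeta=\lambda$ and $s=\hbar/\lambda$, i.e.\ $g_s=g_{\lambda,\hbar}$, gives $g_{\lambda,\hbar}^{-1}\nabla^{\lambda}_{\mathrm{NAH}}g_{\lambda,\hbar}=\nabla^{\hbar}_{\mathrm{NAH}}$, and invoking Proposition \ref{prop:gauge}, which identifies $\nabla^{\hbar}_{\mathrm{NAH}}=\mathcal{A}^{\hbar}\circ\nabla^{\hbar}_{\mathrm{GM}}\circ(\mathcal{A}^{\hbar})^{-1}$, concludes that the limit is gauge equivalent to the oper $\nabla^{\hbar}_{\mathrm{GM}}$ for every $\hbar\in\C^{\times}$.

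The only real content is the first step: identifying the exact $R$-scaling of $h_R$ and checking that it solves \eqref{eq:RrescaledHitchin}. This is where the $\C^\times$-fixed-point property of the Higgs bundle enters, and it is precisely what forces the $R$-dependence of $D(h_R)$ and $\varphi^{\dagger_{h_R}}$ to cancel against the explicit powers of $R$ in \eqref{eq:two-param-family}. Once that cancellation is established, both the triviality of the conformal limit and the passage to the oper are formal consequences of conjugation by the constant diagonal gauges $g_s$ together with Proposition \ref{prop:gauge}.
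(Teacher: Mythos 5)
Your proposal is correct and follows essentially the same route as the paper: both exploit the $\C^\times$-fixed-point property via constant diagonal gauges to identify $h_R$ as a rescaling of $h_1$ (whence $D(h_R)=D(h_1)$ and $\varphi^{\dagger_{h_R}}=R^{-2}\varphi^{\dagger_{h_1}}$), conclude that $\nabla^{\lambda R,R}_{\mathrm{NAH}}$ is independent of $R$, and then conjugate by $g_{\lambda,\hbar}$ and invoke Proposition \ref{prop:gauge}. Your explicit verification that the rescaled metric solves \eqref{eq:RrescaledHitchin}, and the observation that the family depends only on $\zeta/R$, just make explicit what the paper leaves as an immediate check.
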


\begin{proof}
This claim essentially follows from the fact that the Higgs bundle $(E, \xi\,\varphi)$ is isomorphic to $(E, \varphi)$ for any $\xi \in \C^\times$.
More precisely, let $h_R$ be the Hermitian metric which solves the $R$-rescaled Hitchin's equations in \eqref{eq:RrescaledHitchin}.
In the frame $\omega^\hbar_{\mathrm{NAH}}$ it is given by
\begin{equation}
    h_R = \begin{pmatrix} \e^{-K} & 0 & 0 \\
    0 & |R|^{-2}\e^{-K} G_{u\ubar} \de u \de \ubar & 0 \\ 0 & 0 & |R|^{-4}\e^{-K} G_{u \ubar} G_{u \ubar} \de u^2 \de \ubar ^2 \end{pmatrix}.
\end{equation}
Now define
\begin{equation}
    g_R = \begin{pmatrix} 1 & 0 & 0 \\ 0 & R^{-1} & 0 \\ 0 & 0 & R^{-2}\end{pmatrix}
\end{equation}
with respect to the direct sum decomposition of $\mathcal{E}$.
Then we check that
\begin{equation}
    R\varphi = g_R^{-1} \varphi g_R, \quad h_R = g_R^\dagger h_1 g_R.
\end{equation}
Hence it follows that $\lambda R^2 \varphi^{\dagger_{h_R}}= \lambda \varphi^{\dagger_{h_1}}$ and similarly $D(h_R) = D(h_1)$. 
Consequently, $\nabla_{\mathrm{NAH}}^{\lambda R, R}$ is independent of $R$ and the $\lambda$-conformal limit is equal to
\begin{equation}
\nabla^{\lambda}_{\mathrm{NAH}}= \frac{1}{\lambda} \varphi  + D_{h} + \lambda \varphi^{\dagger_{h}},
\end{equation}
for $h=h_1$.
One can immediately check that $g_{\lambda, \hbar}^{-1} \circ \nabla^{\lambda}_{\mathrm{NAH}} \circ g_{\lambda, \hbar} = \nabla^{\hbar}_{\mathrm{NAH}}$.
\end{proof}

\subsection{Quartic example}\footnote{Here and in the following concrete computations, we do not consider the $\hbar$- or $\zeta$-dependence of the solutions which is the starting point of their exact WKB analysis (see e.g. \cite{Hollands:2019wbr}). This will be discussed elsewhere.}
In the following we will provide the geometric data of the VHS $\mathcal{H}$ of weight $2$ attached to the mirror quartic family as a specific example of the previous discussion. We postpone the definition of the mirror quartic and the discussion of how to obtain the associated Picard--Fuchs equation to \ref{ex:mirrorquartic} in \S\ref{examples} because we will need some notions of mirror constructions to do this. The moduli space in this case is $\mathcal{B}= \mathbb{P}^{1}\setminus\left\{0,1,\infty\right\}$. The Picard--Fuchs equation is given in terms of a local coordinate $z\in \mathcal{B}$ centered around 0 as\footnote{We will use $z$ throughout this paper as a local coordinate whenever it is centered around a regular singular point or when it is obtained from the toric data of the mirror symmetry constructions of \S \ref{examples}. We will denote by $u$ either the local coordinate of generic point in the base manifold or, if the context is clear, some explicit coordinate system.}

\begin{equation}\label{PFquartic}
L_{PF}= \theta^3- z \prod_{i=1}^3\,(\theta +i/4) \,,\quad \theta=z\frac{d}{dz}\,,
\end{equation}
the discriminant of this operator is:
\begin{equation}
\Delta= 1-z\,.
\end{equation}
And the Griffiths--Yukawa coupling can be computed to be:
\begin{equation}
c_{zz} = \frac{\kappa}{z^2 \Delta}\,,
\end{equation}
with $\kappa$ an integration constant which we set to 1.

The solutions of the Picard--Fuchs (PF) equation, corresponding to the integrals of the holomorphic form $\omega_0$ over a basis of integral cycles, i.e. periods, are given by \cite{Alim:2014vea}:
\begin{eqnarray}
\pi^0 &=&{}_3 F_2\left( \frac{1}{4},\frac{1}{2},\frac{3}{4};1,1, z\right)\,,\\
\pi^1&=& i\frac{1}{2\pi^{3/2} \Gamma(1/4)\Gamma(3/4)}\,G^{2\,3}_{3\,3} \left(  \begin{array}{ccc|} \frac{1}{4} &\frac{1}{2} & \frac{3}{4}\\ 0&0&0 \end{array}  \,\,z \right)\,,\\
\pi^2&=& \frac{1}{2} (\pi^1)^2/\pi^0\,.
\end{eqnarray}
Here $\pi^0,\pi^1$ are given in terms of the hypergeometric functions $_3 F_2$ and Meijer G-functions respectively. We define:
\begin{equation}
\tau=\frac{\pi^1}{\pi^0}\, , \quad q=e^{2\pi i \tau}\,,
\end{equation}

The integrality and modular properties of the inverse mirror map $z(q)$ have been addressed in \cite{LianYau}. The periods $\pi^0,\pi^1$, mirror map and K\"ahler potential of this example can be expressed in terms of the differential ring of quasi-modular forms associated to the congruence subgroup $\Gamma_0(2)$ of $SL(2,\mathbbm{Z})$. A careful study of the Picard-Fuchs operator and of the monodromy of its solutions reveals that the monodromy group in this case is $\Gamma_0(2)_{+}$ \cite{LianYau,Hosono:2000eb}.
We will use the quasi-modular forms, reviewed in Appendix~\ref{modularappendix}:
\begin{eqnarray}
A(\tau) &=& (\theta_2(\tau) + \theta_3(\tau))^{1/2}\,,\\
B(\tau) &=& \theta_{4}^{2}(2\tau)\, \\
C(\tau) &=& \frac{1}{\sqrt{2}} \theta_{2}^{2}(\tau) \, \\
E(\tau) &=&\frac{1}{3}\left(2 E_2(2\tau) + E_2(\tau) \right) \,,
\end{eqnarray}
which obey the algebraic relation
$$ A^4=B^4+C^4\,.$$
We have moreover the differential ring relations for $\Gamma_0(2)$, given in Appendix \ref{modularappendix}:

\begin{align}\label{quarticdiffring}
\partial_\tau A &=\frac{1}{8} A(E + \frac{A^4-2B^4}{A^2})\,,\\ \nonumber
\partial_\tau B&= \frac{1}{8} B(E-A^2)\,,\\ \nonumber
\partial_\tau E&=\frac{1}{8}(E^2-A^4)\,.
\end{align}

We find an expression for the inverse mirror map
\begin{equation}
z(\tau)= \frac{4 B(\tau)^4 (A(\tau)^4 - B(\tau)^4)}{ A(\tau)^8}\,, 
\end{equation}
and moreover 
\begin{equation}
\pi^0(\tau) = A^2(\tau)\,.
\end{equation}
and we obtain for the K\"ahler potential and metric 
\begin{equation}
e^{-K}= 2 |\pi^0|^2 (\mathrm{Im} \tau)^2 = 2 A(\tau)^2 \, A(\bar{\tau})^2 (\mathrm{Im} \tau)^2\,, \quad G = \frac{1}{4 (\Im \tau)^2} \de \tau \de \bar{\tau}\,.
\end{equation}
We note that $G$ is the Poincar\'e metric on $\mathbb{H}/\Gamma_0(2)^+$ which is isomorphic to $\mathcal{B}$.

For the coefficients of the Picard--Fuchs equation for the holomorphic form $\omega_0$:
\begin{equation}
\nabla^3_{\frac{\partial}{\partial z}} \omega_0 =  -b_2(z) \nabla^2_{\frac{\partial}{\partial z}} \omega_0-b_1(z) \nabla_{ \frac{\partial}{\partial z}} \omega_0  -b_0(z) \omega_0   \,,
\end{equation}
we obtain from \eqref{PFquartic}:
\begin{equation}
b_0(z)=\frac{3}{32 (z-1) z^2} \,,\quad b_1(z)= \frac{51 z-16}{16 (z-1) z^2}\,,\quad b_2(z)=\frac{6-9 z}{2 z-2 z^2}\,.
\end{equation}
We can now verify the differential ring equations \ref{quarticdiffring} and map these using the modular expressions for $z, \pi^0$ and $e^{-K}$ to the differential ring relations of the quasi--modular forms of $\Gamma_0(2)$. 

Finally, we turn to the parabolic Higgs bundle $(\hat{E},\hat{\varphi})$ on $\hat{\mathcal{B}}=\mathbb{CP}^1$ with divisor $D=\{0,1,\infty\}$ defined by the VHS $\VH$ of weight $2$.
As reviewed in detail in \S\ref{s:VHSHiggs}, the filtered holomorphic bundle $(\VH_{\Oo},F^\bullet \VH_{\Oo})$ extends to a filtered holomorphic bundle $(\hat{\VH}_{\Oo}, \hat{F}^\bullet \hat{\VH}_{\Oo})$. 
Likewise the holomorphic Gau\ss--Manin connection $\nabla$ extends to a logarithmic connection $\hat{\nabla}$ on $\hat{\VH}_{\Oo}$ with logarithmic poles along $D$.

Define the holomorphic bundle $\hat{E}:=\hat{E}^{2,0}\oplus \hat{E}^{1,1}\oplus \hat{E}^{0,2}$ on $\hat{\mathcal{B}}$ for $\hat{E}^{p,q}=\hat{F}^{p}/\hat{F}^{p+1}$.
Together with Griffiths' transversality, $\hat{\nabla}$ induces the logarithmic Higgs field $\hat{\varphi}=\oplus_{p=0}^2 \hat{\varphi}^p$ with components \begin{equation*}
\hat{\varphi}^p:\hat{E}^{2-p,p}\to \hat{E}^{2-p-1,p+1}\otimes \Omega^1(\hat{\mathcal{B}},\log(D)).
\end{equation*}
As an application of Theorem \ref{thm:degEpq}, we make the bundle $\hat{E}$ explicit for this example.
\begin{ex}[Mirror quartic]
The exponents of the Picard--Fuchs equations at the points $d\in D=\{0,1,\infty\}$ (see \eqref{eq:indicial} below for a definition) are given by
\begin{center}
\begin{tabular}{l | l  l  l}
d & 0 & 1 & $\infty$ \\ \hline
$\mu_1^d$ & 0 & 0 & $\frac{1}{4}$ \\
$\mu_2^d$ & 0 & $\frac{1}{2}$ & $\frac{1}{2}$ \\
$\mu_3^d$ & 0 & 1 & $\frac{3}{4}$
\end{tabular}
\end{center}
Then the induced parabolic Higgs bundle $(\hat{E},\hat{\varphi})$ satisfies
\begin{equation*}
\hat{E}=\hat{E}^{2,0}\oplus \hat{E}^{1,1} \oplus \hat{E}^{0,2}\cong \Oo_{\mathbb{P}^1}\oplus \Oo_{\mathbb{P}^1}(-1) \oplus \Oo_{\mathbb{P}^1}(-1). 
\end{equation*}
This follows from Theorem \ref{thm:degEpq} and the classification of line bundles over $\mathbb{P}^1$.
Hence the logarithmic Higgs field $\hat{\varphi}$ has components 
\begin{gather*}
    \hat{\varphi}^0\in H^0(\mathbb{P}^1,\mathcal{O}_{\mathbb{P}^1})\cong \C, \\
    \hat{\varphi}^1\in H^0(\mathbb{P}^1, \mathcal{O}_{\mathbb{P}^1}(1))
\end{gather*}
since $\Omega^1(\mathbb{P}^1,\log(D))=\mathcal{O}_{\mathbb{P}^1}(1)$.
\end{ex}



\section{Variations of Hodge structures and their relation to opers}\label{s:VHStt}
The aim of this section is twofold. 
Firstly, we motivate and review variations of Hodge structures.
Secondly, we show that variations of Hodge structures over a Riemann surface $\cu$ with a so-called generic cyclic vector are equivalent to opers. 
 

\subsection{Variations of Hodge structures}\label{ss:VHS}

We begin with a single projective manifold $X\hookrightarrow \mathbb{CP}^N$ of $\dim_{\C}(X)=n$ with K\"ahler class $\omega \in H^2(X,\Z)$. 
For every $0\leq k \leq n$, the cohomology groups $H^k(X,\C)$ admit the Hodge decomposition 
\begin{equation}\label{eq:hodgedecomp}
H^k(X,\C)=\bigoplus_{p+q=k} H^{p,q}(X),\quad H^{p,q}(X)=\overline{H^{q,p}}(X).
\end{equation}

It is equivalent to the Hodge filtration $F^\bullet H^k(X,\C)$ defined by 
\begin{equation*}
F^pH^k(X,\C)=\bigoplus_{l\geq p} H^{l,k-l}(X),\quad F^p\cap \bar{F}^q=0 \mbox{ if }p+q=k+1,
\end{equation*}
via $H^{p,q}(X)=F^pH^k(X,\C)\cap \bar{F}^qH^k(X,\C)$. 
The group $H^k(X,\Z)$ carries an additional structure, namely the bilinear form $Q:H^k(X,\Z)\otimes_{\Z} H^k(X,\Z)\to \Z$, 
\begin{equation}\label{eq:intersectionform}
Q(\alpha,\beta)=(-1)^{\tfrac{1}{2}k(k-1)}~\int_X \alpha\wedge \beta \wedge \omega^{\wedge n-k}.
\end{equation}
It is symmetric if $k$ is even and skew-symmetric if $k$ is odd. 
To state the Riemann-Hodge bilinear relations (\cite{Huybrechts}) satisfied by $Q$, we introduce the Weil operator $W\in \mathrm{End}(H^k(X,\C))$ defined by
\begin{equation}\label{eq:WeilOp}
\left.W\right|_{H^{p,q}}=i^{p-q}. 
\end{equation}
The Riemann-Hodge bilinear relations are then given by
\begin{enumerate}[label=\Roman*)]
\item \label{eq:RHi}
$Q(H^{p,q},H^{r,s})=0$, $(r,s)\neq (q,p)$, or equivalently $Q(F^l, F^{k-l+1})=0$, 
\item \label{eq:RHii}
for every non-zero primitive cohomology class\footnote{Recall that a cohomology class $\alpha\in H^k(X,\Q)$ is primitive if $\alpha\wedge \omega^{\wedge n-k+1}=0$.}   $\alpha\in H^k_{\mathrm{\mathrm{prim}}}(X,\Q)$, 
\begin{equation}\label{eq:hdgmetric} 
h(\alpha,\alpha):=Q(W\alpha,\bar{\alpha})>0. 
\end{equation} 
\end{enumerate}
Abstracting these properties yields the following 

\begin{dfn}
An integral Hodge structure ($\Z$-Hodge structure) of weight $k$ is a pair $(H_{\Z}, F^\bullet H_{\C})$ consisting of a free abelian group $H_{\Z}$ of finite rank and a decreasing filtration $F^\bullet H_{\C}$ of $H_{\C}=H_{\Z}\otimes \C$ such that $F^pH_{\C}\cap \bar{F}^qH_{\C}=0$ if $p+q=k+1$. \\
A polarization on $(H_{\Z},F^\bullet H_{\C})$ is a bilinear map $Q:H_{\Z}\otimes H_{\Z}\to \Z$ such that \ref{eq:RHi} and \ref{eq:RHii} are satisfied. 
The Hermitian metric $h$ on defined by \eqref{eq:hdgmetric} is called Hodge metric. 
\end{dfn}
The notions of a rational or real Hodge structure ($\Q$-/$\R$-Hodge structure) and polarizations are defined analogously by replacing $\Z$ with $\Q$ or $\R$.

\begin{ex}
Using the Lefschetz decomposition of $H^k(X,\C)$, it is possible to construct a polarization $Q$ on all of $H^k(X,\C)$ and not just on the primitive part $H^k_{\mathrm{prim}}(X,\Z)$. 
However, the polarization is no longer defined over $\Z$ in general because the Lefschetz decomposition is only defined over $\Q$. 
Hence $H^k(X,\Q)$ is a polarizable rational Hodge structure of weight $k$. 
\end{ex}

\begin{ex}[Hodge structures of K3 surfaces]\label{ex:hodgeK3}
	Let $X$ be an algebraic K3 surface, i.e. a compact connected K\"ahler surface $X$ such that $\Omega_X^2\cong \Oo_X$, $H^{0,1}(X)=0=H^{1,0}(X)$ and $X$ admits an integral K\"ahler class $\omega\in H^2(X,\Z)$.
	Then $H^2(X,\Z)$ together with the intersection form \eqref{eq:intersectionform} is a polarized $\Z$-Hodge structure of weight $2$. 
	It has the property that 
	\begin{equation*}
	h^{0,0}=h^{2,2}=1, \quad h^{1,0}=h^{0,1}=0, \quad h^{2,0}=h^{0,2}=1, \quad h^{1,1}=20
	\end{equation*}
	and all other $h^{p,q}:=\dim_\C H^{p,q}(X)$ are zero. 
\end{ex}

The previous discussion works in the family case as well. 
More precisely, let $\pi: \mathcal{X}\to \mathcal{B}$ be a family of projective manifolds of dimension $n$ over the complex manifold $\mathcal{B}$. 
Then the polarized integral Hodge structures $(H^k_{\mathrm{prim}}(X_b,\Z), F^\bullet H^k_{\mathrm{prim}}(X_b,\C),Q_b)$ of weight $k$ vary nicely over $\mathcal{B}$ and determine a polarized integral variation of Hodge structures (\cite{GriffithsI}): 
\begin{dfn}
Let $\mathcal{B}$ be any complex manifold. 
An \emph{integral variation of Hodge structures ($\Z$-VHS) of weight $k$} is a tuple $\VH=(\VH_{\Z}, F^\bullet \VH_{\Oo})$ consisting of 
\begin{itemize}
\item 
a locally constant sheaf $\VH_{\Z}$ of free abelian groups of finite rank, 
\item 
a decreasing filtration $F^\bullet \VH_{\Oo}$ of the associated holomorphic bundle $\VH_{\Oo}=\VH_{\Z}\otimes \Oo_B$ in holomorphic subbundles.  
\end{itemize}
These are subject to the conditions 
\begin{enumerate}[label=\roman*)]
\item 
the fibers $\VH_{b}=(\VH_{\Z,b}, F^\bullet \VH_{\Oo,b})$ form an integral Hodge structure of weight $k$, and
\item
the filtration $F^\bullet \VH_{\Oo}$ satisfies Griffiths transversality with respect to the holomorphic Gau\ss--Manin connection $\nabla$: 
\begin{equation*}
\nabla F^p \VH_{\Oo}\subset F^{p-1} \VH_{\Oo}\otimes \Omega^1_B. 
\end{equation*}
\end{enumerate}
\medskip

A \emph{polarization} of a $\Z$-VHS $\VH$ is a morphism $Q:\VH_{\Z}\otimes \VH_{\Z}\to \underline{\Z}_{\mathcal{B}}$, for the constant sheaf $\underline{\Z}_\mathcal{B}$, such that its restriction $Q_b$ to the fiber over $b$ is a polarization of the integral Hodge structure $\VH_b$. 
A $\Z$-VHS together with a polarization $Q$ is called a \emph{polarized $\Z$-VHS}. 
It is called \emph{polarizable} if it admits a polarization. 
\end{dfn}
\begin{rem}
	By working over $R=\Q,\R$ instead of $\Z$, we obtain the notion of an $R$-VHS. 
	A polarization of an $R$-VHS is defined analogously. 
	In \S\ref{s:VHSHiggs} we further review complex VHS. 
	Clearly, every polarized $\Z$-VHS $(\VH,Q)$ on $\mathcal{B}$ determines a polarized $R$-VHS by tensoring with the constant sheaf $\underline{R}_{\mathcal{B}}$ for $R=\Q,\R$.
	
    All of the following result in this section hold true for $R$-VHS for $R=\Z,\Q,\R$.
    We concentrate on $\Z$-VHS because these arise from our geometric examples. 
\end{rem}

Given a polarized $\Z$-VHS $(\VH,Q)$ of weight $w$ with Hodge filtration $F^\bullet \VH_{\Oo}$, we define the Hodge bundles
\begin{equation}\label{eq:hdgbdls}
\VH^{p,q}= F^p\VH_{\Oo} \cap \bar{F}^q\VH_{\Oo}\cong F^p\VH_{\Oo}/F^{p+1}\VH_{\Oo}. 
\end{equation}
The last isomorphism is only a $C^\infty$-isomorphism. 
However, the bundles on the right-hand side of  (\ref{eq:hdgbdls}) are holomorphic bundles.
Hence each $\VH^{p,q}$ is naturally endowed with a holomorphic structure. 
The smooth splitting 
\begin{equation}\label{eq:hpqbdl} 
\VH_{\Oo}=\bigoplus_{p+q=w} \VH^{p,q}
\end{equation} 
therefore endows $\VH_{\Oo}$ with another holomorphic structure.
For better distinction, we denote the resulting holomorphic bundle as
\begin{equation}\label{eq:epq}
E=\bigoplus_{p+q=w} E^{p,q}. 
\end{equation}
As for a single polarized Hodge structure, the polarization $Q$ induces the Hodge metric $h(v,w)=Q(W(v),w)$ on $E$ and the decomposition (\ref{eq:epq}) is orthogonal with respect to $h$. 
 
Besides the Hodge metric, we further obtain the non-degenerate pairing
\begin{equation*}
\eta(v,w)=Q(W(v),w). 
\end{equation*}
It is related to the Hodge metric $h$ by $h(v,w)=\eta(v,\tau(w))$ for the complex conjugation $\tau(v)=\bar{v}$ with respect to $\VH_{\Z}$. 
The tuple $(\VH,h,\eta)$ is an example of a $tt^*$-geometry of Cecotti--Vafa \cite{Cecotti:1991vb} (see \cite{Hertlingtt} for a mathematical account).


In the following, we concentrate on any $\Z$-VHS over a not necessarily compact Riemann surface $\cu$, generalizing the setup of Section \ref{ss:vhsintro}.

\subsection{From \texorpdfstring{$\Z$}{Z}-VHS to opers and back} 
We next explain how VHS on a Riemann surface $C$ are related to ($GL(r,\C)$-)opers (\cite{beilinson2005opers}).

\begin{dfn}
A $(GL(r,\C))$-\emph{oper} over the Riemann surface $C$ is a pair $(F^\bullet \mathcal{V}, \nabla)$ consisting of 
\begin{itemize} 
\item 
a holomorphic bundle $\mathcal{V}$ of rank $r$ with a decreasing filtration 
\begin{equation*}
\mathcal{V}=F^0\mathcal{V}\supset \dots \supset F^{r-1}\mathcal{V}\supset F^{r}\mathcal{V}=0
\end{equation*}
such that $\mathrm{rk}(Gr_F^k\mathcal{V})=1$ for $Gr_F^k\mathcal{V}=F^k\mathcal{V}/F^{k+1}\mathcal{V}$ and all $0\leq k \leq r-1$, 
\item
a holomorphic connection $\nabla$ such that $F^\bullet\mathcal{V}$ satisfies Griffiths transversality
\begin{equation*}
\nabla F^k \mathcal{V} \subset F^{k-1} \mathcal{V}\otimes \Omega^1_C.
\end{equation*}
Moreover, the $\Oo_C$-linear morphisms $Gr^k_F\mathcal{V} \to Gr^{k-1}_F\mathcal{V}\otimes \Omega^1_C$ induced by $\nabla$ are isomorphisms for all $0 \leq k \leq r-1$.  
\end{itemize} 
We call such a filtration of $(\mathcal{V},\nabla)$ an \emph{oper filtration}. 
An oper $(F^\bullet \mathcal{V}, \nabla)$ is an $SL(r,\C)$-oper if $\nabla$ induces the trivial connection on $\det \mathcal{V}$. 
\end{dfn} 

\begin{ex}\label{ex:opers}
	Locally, for every oper $(\mathcal{V},\nabla)$ there is a frame of $\mathcal{V}$ such that the corresponding connection $1$-form of $\nabla$ has the form
	\begin{equation*}
	\begin{pmatrix} * & * & \cdots & \cdots & * \\
		+ & * & \cdots & \cdots & * \\
		0 & \ddots & \ddots & \ddots & \vdots \\
		\vdots & \ddots & \ddots & \ddots & \vdots \\
		0 & \cdots & 0 & + & * 
	\end{pmatrix}.
	\end{equation*}
	Here $+$ are nowhere vanishing entries and $*$ are arbitrary ones. 
	
	A global standard example is given as follows: 
	let $C$ be a compact Riemann surface of genus $\geq 2$ and $\mathcal{L}$ a spin bundle, i.e. $\mathcal{L}^2\cong K_C$.  
	Let 
	\begin{equation}\label{eq:extK}
	\begin{tikzcd}
	0 \ar[r] & \mathcal{L} \ar[r] & \mathcal{V} \ar[r] & \mathcal{L}^{*} \ar[r] & 0
	\end{tikzcd}
	\end{equation}
	be the non-trivial extension. 
	Let $g$ be a Riemannian metric in the conformal class of $C$. 
	Then the Levi-Civita connection of $g$ defines a holomorphic connection $\partial^{\mathcal{L}}$ on $\mathcal{L}$.
	With respect to the smooth splitting of \eqref{eq:extK}, define the holomorphic connection 
	\begin{equation*}
	\nabla=\begin{pmatrix}
	\partial^{\mathcal{L}} & 0 \\
	1 & \partial^{\mathcal{L}^*} 
	\end{pmatrix}.
	\end{equation*}
	Note that $1$ makes sense here because $K_C\otimes \mathrm{Hom}(\mathcal{L},\mathcal{L}^*)\cong \Oo_C$.
	Then $(\mathcal{V},\nabla)$ is an oper, in fact an $SL(2,\C)$-oper. 
\end{ex}

Opers are closely related to VHS.
The only missing datum is a compatible integral (or rational/real) structure, i.e. a locally constant sheaf $\mathcal{V}_{\Z}\subset \mathcal{V}$ of free abelian groups of rank $r$ such that $\mathcal{V}_{\Z}\otimes_{\Z}\Oo_C\cong \mathcal{V}$ and $\nabla$ coincides with the canonical connection on the left-hand side under this isomorphism. 
Moreover, we require that $F^\bullet$ is a Hodge filtration on $\mathcal{V}_{\Z}\otimes \Oo_C\cong \mathcal{V}$.
In this case, $(\mathcal{V}_{\Z}, F^\bullet \mathcal{V})$ is a $\Z$-VHS of weight $w=r-1$ and type $(1,\dots, 1)$, i.e. $rk(Gr^p_F \mathcal{V})=1$ for all $p\in \{0,\dots, r-1 \}$.
\begin{ex}[Families of elliptic curves]\label{ex:ell1}
	Let $\mathcal{X}\to C$ be a family of elliptic curves over the Riemann surface $C$ and let $(\VH_{\Z},F^\bullet\VH_{\Oo})$ be the induced polarizable $\Z$-VHS of weight $1$ over $C$. 
	It defines the period map $\mathcal{P}: C\to \mathbb{H}/\Gamma$ (see \cite[\S 4.5]{CarlsonEtAl2}) where $\Gamma\subset \mathrm{Aut}(\mathbb{H})$ is the monodromy group of the family $\mathcal{X}$.
	Here we have identified the period domain $\mathcal{D}$ for Hodge structure of weight $1$ and rank $2$, i.e. the space of all Hodge filtrations $F^1\subset H_{\C}\cong \C^2$, with $\mathbb{H}\subset \mathbb{CP}^1$. 
	
	The condition that $(F^\bullet \VH_{\Oo}, \nabla)$ is an oper is rephrased as a condition on $\mathcal{P}$ as follows.
	The tangent space $T_u \mathbb{H}$ to the period domain $\mathcal{D}=\mathbb{H}$ is canonically identified with $T_u \mathbb{H}\cong \mathrm{Hom}(F_u^1,F_u^0/F_u^1)$. 
	Then the derivative $d\mathcal{P}_u: T_u C \to T_u \mathbb{H}$ of the period map is identified with 
	\begin{equation}\label{eq:dP}
	T_u C \to \mathrm{Hom}(F_u^1,F_u^0/F_u^1),\quad v\mapsto (\alpha \mapsto \nabla_v \alpha \mbox{ mod } F_u^1),
	\end{equation}
	see  (\cite[Lemma 5.3.2.]{CarlsonEtAl2}).
	Hence $(F^\bullet \VH_{\Oo}, \nabla)$ is an oper if, and only if, $d\mathcal{P}_u$ is an isomorphism if, and only if,  $\mathcal{P}$ is a local isomorphism. 
	If the last condition is satisfied, then the family $\mathcal{X}\to C$ is called complete. 
\end{ex}

\begin{ex}[Families of K3 surfaces]\label{ex:K3}
	Let $(\VH_{\Z},F^\bullet \VH_{\Oo})$ be a $\Z$-VHS of weight $2$ which is determined by a family of algebraic K3 surfaces over a Riemann surface $\cu$. 
	In this case $\VH_{\Z}$ is always of rank $22$, compare Example \ref{ex:hodgeK3} and 
	\begin{equation*}
	\mathrm{rk}(Gr_F^2)=1, \quad \mathrm{rk}(Gr^1_F)=20, \quad \mathrm{rk}(Gr^0_F)=1. 
	\end{equation*}
	Therefore $(F^\bullet \VH_{\Oo}, \nabla)$ cannot be an oper for dimension reasons. 
	However, by working with complete families of $M$-polarized K3 surfaces for $\check{M}=\langle 2n \rangle$, we obtain integral variations of Hodge structures with a generic cyclic vector, cf. Section \ref{ss:hodgek3}. 
\end{ex}
\begin{rem}
The previous two examples show that complete families $\mathcal{X}\to C$ of elliptic curves and certain lattice polarized K3 surfaces determine an oper over $C$.
It seems plausible that an analogous statement is true for all complete families of Calabi--Yau $d$-folds even over higher-dimensional bases.
\end{rem}

To explain how to pass from $\Z$-VHS to opers, we need the notion of a (generic) cyclic vector: 
\begin{dfn}\label{dfn:cyclic}
Let $(\mathcal{V}, \nabla)$ be a holomorphic bundle of rank $r$ on the Riemann surface $C$ with a holomorphic connection $\nabla$. 
A \emph{cyclic vector} of $(\mathcal{V},\nabla)$ is a holomorphic section $\omega\in H^0(C,\mathcal{V})$ such that for each $u\in C$ and every holomorphic vector field $X$ with $X(u)\neq 0$, 
\begin{equation}
\nabla^k_{X} \omega, \quad k=0,\dots, r-1
\end{equation}
is a local frame of $\mathcal{X}$ around $u$. 
A \emph{generic cyclic vector} of $(\mathcal{V},\nabla)$ is a non-zero meromorphic section $\omega\in \mathcal{M}(C,\mathcal{V})$ such that $\omega_{|C'}$ is a cyclic vector where $C'=C-D_\omega$ is the complement of the divisor $D_\omega$ defined by $\omega$.
\end{dfn}

\begin{ex}
	In Example \ref{ex:ell1} we have seen that the filtered holomorphic bundle $(F^\bullet \VH_{\Oo},\nabla)$ with holomorphic connection determined by the family $\mathcal{X}\to C$ of elliptic curves is an oper if, and only if, the period map $\mathcal{P}:C\to \mathbb{H}/\Gamma$ is a local isomorphism. 
	This is in turn equivalent to the existence of a generic cyclic vector: 
	
	Let $\omega\in \mathcal{M}^0(C, F^1)$ be a non-zero meromorphic section and $C'=C-D_\omega\subset C$ the complement of its zeros and poles. 
	Fix $u\in C'$ and identify $F_u^0/F_u^1\cong H^{0,1}_u$.
	In the bases $\omega_u\in F^0_u=H^{1,0}_u$ and $\bar{\omega}_u\in H^{0,1}_u$ the homomorphism $d\mathcal{P}_u(v)\in \mathrm{Hom}(H^{1,0}_u,H^{0,1}_u)$ for $v\in T_u C$ is represented by 
	\begin{equation}
	\frac{Q(\omega_u, \nabla_v \omega_u)}{Q(\omega_u,\bar{\omega}_u)}\in \C.
	\end{equation}
	This is non-zero for $v\neq 0$ if, and only if,  $(\omega, \nabla_V \omega)$ is a local frame of  $\VH_{\Oo}$ around $u$ (for an holomorphic vector field $V$ extending $v$), i.e. if, and only if,  $\omega$ is a generic cyclic vector.
	
	A similar discussion holds true for a family $\mathcal{X}\to C$ of compact Calabi-Yau threefolds over a Riemann surface $C$ which is complete, i.e. the Kodaira-Spencer map $\kappa_u: T_u C\to H^1(X_u,T_{X_u})$ is an isomorphism for each $u\in C$.
	We refer \cite[(1.4)]{BryantGriffiths} for details. 
\end{ex}

Every generic cyclic vector $\omega\in \mathcal{M}(C,\mathcal{V})$ defines an oper filtration $F^\bullet_\omega$. 
Define $F^{w-l}_\omega$, $w=r-1$, as the smallest subbundle of $\mathcal{V}$ which contains 
\begin{equation*}
\nabla_X^k \omega, \quad k=0,\dots, l
\end{equation*}
for every local holomorphic vector field $X$. 
Since $\omega$ is a generic cyclic vector, $rk(F_\omega^{w-l})=l+1$ and $rk(Gr_{F_\omega}^k)=1$ for $k\in \{ 0,\dots, w\}$.

\begin{prop}\label{lem:cyclicoper}
	The filtration $F_\omega^\bullet$ of $(\mathcal{V},\nabla)$ is an oper filtration. 
	Conversely, if $(\VH_{\Oo},\nabla)$ carries an oper filtration $F^\bullet \VH_{\Oo}$, then $(\VH_{\Oo},\nabla)$ admits a generic cyclic vector $\omega\in \mathcal{M}(C,\mathcal{V})$ such that $F_\omega^\bullet=F^\bullet$. 
	
	In particular, if a $\Z$-VHS $(\VH_{\Z},F^\bullet \VH_{\Oo})$ of rank $r$ and weight $w=r-1$ admits a generic cyclic vector $\omega\in \mathcal{M}(C,F^{w})$,  then $(F^\bullet\VH_{\Oo},\nabla)$ is an oper with $F^\bullet=F^\bullet_\omega$. 
	Hence it is of type $(1,\dots, 1)$.
\end{prop}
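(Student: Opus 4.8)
The plan is to reduce everything to a local computation in a coordinate $u$ on $C$, using that on a curve every local holomorphic vector field is $f\partial_u$ for a holomorphic function $f$. First I would record the identity $\nabla_{f\partial_u}^l\omega = f^l\,\nabla_{\partial_u}^l\omega + (\text{lower-order }\nabla_{\partial_u}\text{-derivatives of }\omega)$, proved by induction on $l$ from $\nabla_{f\partial_u}=f\nabla_{\partial_u}$ and the Leibniz rule. This shows $\mathrm{span}\{\nabla_X^k\omega : k\le l,\ X\text{ local}\}=\mathrm{span}\{\nabla_{\partial_u}^k\omega : k\le l\}$, so that $F_\omega^{w-l}$ is locally the saturation of $\langle \omega,\nabla_{\partial_u}\omega,\dots,\nabla_{\partial_u}^l\omega\rangle$ and, on $C'=C-D_\omega$, is exactly this rank-$(l+1)$ subbundle. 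Griffiths transversality for $F_\omega^\bullet$ is then immediate: applying $\nabla$ to $\sum_{j\le l}g_j\nabla_{\partial_u}^j\omega$ only produces terms $\nabla_{\partial_u}^i\omega$ with $i\le l+1$, so $\nabla F_\omega^{w-l}\subseteq F_\omega^{w-l-1}\otimes\Omega^1_C$; and the induced graded map sends the generator $\overline{\nabla_{\partial_u}^l\omega}$ of $\mathrm{Gr}^{w-l}_{F_\omega}$ to $\overline{\nabla_{\partial_u}^{l+1}\omega}\otimes\de u$, which generates $\mathrm{Gr}^{w-l-1}_{F_\omega}$ on $C'$. Hence each graded map is an isomorphism on $C'$, establishing that $F_\omega^\bullet$ is an oper filtration over $C'$ (transversality, being the vanishing of an $\Oo_C$-linear bundle map on the dense set $C'$, extends over all of $C$).

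For the converse I would run this backwards. Given an oper filtration $F^\bullet\mathcal{V}$, choose a nonzero meromorphic section $\omega$ of the line bundle $F^{r-1}=F^{w}$. Away from its zeros and poles $\omega$ generates $F^{w}$, and since the graded map $\mathrm{Gr}^{w-k}_F\to\mathrm{Gr}^{w-k-1}_F\otimes\Omega^1_C$ is an isomorphism, an induction on $k$ shows that $\omega,\nabla_{\partial_u}\omega,\dots,\nabla_{\partial_u}^k\omega$ generate $F^{w-k}$ with $\nabla_{\partial_u}^k\omega$ projecting to a generator of $\mathrm{Gr}^{w-k}_F$. For $k=w$ this is a local frame of $\mathcal{V}=F^0$, so $\omega$ is cyclic on the complement of its divisor; thus $\omega$ is a generic cyclic vector, and by construction $F_\omega^\bullet=F^\bullet$.

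The ``in particular'' statement is the substantive one. Starting from $\omega\in\mathcal{M}(C,F^w)$, Griffiths transversality for the VHS gives $\nabla_{\partial_u}^k\omega\in F^{w-k}$, so each $\nabla_{\partial_u}^k\omega$ has a class in $\mathrm{Gr}^{w-k}_F=\VH^{w-k,k}$. The heart of the argument is that these classes are all nonzero on $C'$. If $\overline{\nabla_{\partial_u}^{k_0}\omega}=0$, i.e. $\nabla_{\partial_u}^{k_0}\omega\in F^{w-k_0+1}$, then Griffiths transversality propagates this extra degree, giving $\nabla_{\partial_u}^{m}\omega\in F^{w-m+1}$ for all $m\ge k_0$; taking $m=w$ forces all $w+1$ frame vectors $\nabla_{\partial_u}^0\omega,\dots,\nabla_{\partial_u}^w\omega$ into $F^1$, whence $F^1=\VH_{\Oo}$ and $h^{0,w}=0$. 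But the real structure gives $\VH^{0,w}=\overline{\VH^{w,0}}$, so $h^{0,w}=h^{w,0}=\mathrm{rk}(F^w)\ge 1$ because $\omega$ is a nonzero section of $F^w$, a contradiction. Hence every class $\overline{\nabla_{\partial_u}^k\omega}$ is nonzero, so $\mathrm{rk}(\mathrm{Gr}^{w-k}_F)\ge 1$ for each $k$; since these $w+1=r$ graded pieces have ranks summing to $r$, each has rank exactly one and the VHS is of type $(1,\dots,1)$. Finally, $\nabla_{\partial_u}^k\omega\in F^{w-k}$ together with equality of ranks forces $F_\omega^{w-k}=F^{w-k}$ on $C'$, and two saturated subsheaves agreeing on a dense open set agree everywhere, so $F^\bullet=F_\omega^\bullet$ and $(F^\bullet\VH_{\Oo},\nabla)$ is an oper by the first part.

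The main obstacle is the nonvanishing required at the divisor $D_\omega$. The graded maps of $(\VH_{\Oo},\nabla)$ are $\Oo_C$-linear morphisms of line bundles that I can show are isomorphisms on $C'$, but they are intrinsic to the VHS and could in principle degenerate at a point of $D_\omega$, where $\omega$ has a zero or pole and the naive frame $\nabla_{\partial_u}^k\omega$ ceases to be a frame; there the argument above only yields an oper over $C'=C-D_\omega$. Upgrading to an oper over all of $C$ requires knowing that these intrinsic graded maps, equivalently the derivative of the period map, are nowhere vanishing, which is exactly the completeness/immersivity property satisfied in the geometric examples (cf. Examples \ref{ex:ell1} and \ref{ex:K3}). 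I would therefore either phrase the oper conclusion over $C'$ or invoke completeness; the Hodge-theoretic non-degeneracy argument above is what reduces the remaining question to the purely local behaviour along $D_\omega$.
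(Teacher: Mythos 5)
Your proposal coincides with the paper's own proof everywhere except along the divisor $D_\omega$, and that is exactly where the real issue sits. On $C'=C-D_\omega$ the paper argues as you do (cyclicity of $\omega$ gives the oper axioms by construction), and its converse direction is the same as yours: a nonzero meromorphic section of the line bundle $F^{r-1}$ is generically cyclic by the oper axioms. For the ``in particular'' claim the paper only says it ``follows from Griffiths' transversality''; your Hodge-symmetry argument (that $F^1=\VH_{\Oo}$ would force $h^{0,w}=0$, contradicting $h^{0,w}=h^{w,0}\geq 1$) is a correct and genuinely useful way to fill this in, with the one caveat that the propagation step $\nabla^{k_0}\omega\in F^{w-k_0+1}\Rightarrow \nabla^{m}\omega\in F^{w-m+1}$ is valid for sections (identical vanishing of the graded class), not pointwise; read this way it yields $\mathrm{rk}(Gr^{w-k}_F)\geq 1$ for every $k$, which is all you need for type $(1,\dots,1)$ and for $F^\bullet_\omega=F^\bullet$.

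Where you stop, the paper does not: at $d\in D_\omega$ it writes $\omega=z^k\omega'$ with $\omega'(d)\neq 0$, expands $\omega'$ in a flat frame, and invokes Remark \ref{rem:exponents} to assert that the exponents of $\omega'$ at $d$ are $0,1,\dots,r-1$, which is the same as saying that $\omega'$ is cyclic at $d$; this closes the proof. So the paper's route through $D_\omega$ is precisely the nonvanishing you flagged as the obstacle. Your suspicion that this cannot be had for free is justified: Remark \ref{rem:exponents} is stated without proof, and at a point of $D_\omega$ its content is exactly the assertion being proved there, so citing it is circular. In fact, with Definition \ref{dfn:cyclic} taken literally the statement fails. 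Take $C=\Delta$, $\VH_{\Z}=\Z s_1\oplus\Z s_2$ with the flat connection and standard symplectic polarization, $\tau(z)=i+z^2$, and the weight-one polarized $\Z$-VHS $F^1=\Oo_\Delta\cdot(s_2+\tau(z)s_1)$ (the pullback of the elliptic-modular VHS under a map with a critical point at $z=0$). Then $\omega=z\,(s_2+\tau(z)s_1)$ satisfies $D_\omega=\{0\}$ and $\omega\wedge\nabla_{d/dz}\omega=2z^3\,s_2\wedge s_1$, so $\omega$ is a generic cyclic vector in the sense of Definition \ref{dfn:cyclic}; but the graded map $F^1\to(\VH_{\Oo}/F^1)\otimes\Omega^1_\Delta$ sends the frame $s_2+\tau s_1$ to $2z\,[s_1]\otimes\de z$ and vanishes at $z=0$, so $(F^\bullet,\nabla)=(F^\bullet_\omega,\nabla)$ is not an oper. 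Correspondingly, the solutions of the Picard--Fuchs equation are spanned by $z$ and $z^3$, so the exponents at $0$ are $\{1,3\}$ rather than the $\{1,2\}$ predicted by Remark \ref{rem:exponents}.

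So your proof and the paper's proof have the same gap; yours has the virtue of making it explicit rather than hiding it in an unproved remark. The two remedies you propose are the correct ones: either state the conclusion over $C'$ only, or add the hypothesis that a local frame of $F^w$ remains cyclic along $D_\omega$ --- in the geometric setting this is exactly the completeness/immersivity of the period map as in Examples \ref{ex:ell1} and \ref{ex:K3}, which is why the mirror-symmetry examples in the paper are unaffected.
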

\begin{proof}
	Let $C'=C-D$ be the complement of the pole and zero divisor $D$ of $\omega$.
	By the construction of $F_{\omega}^\bullet$ since $\omega$ is a cyclic vector, $(F_\omega^\bullet, \nabla)$ is an oper on $C'$. 
	
	Now let $d\in D$ and choose a local coordinate $z$ centered at $d$. 
	Then $\omega=z^k \omega'$ for a holomorphic section $\omega'$ with $\omega'(0)\neq 0$. 
	Let $s_i$ be a local flat frame of $\mathcal{V}$ around $d$. 
	Then the section $\omega'$ is given by 
	\begin{equation*}
	\omega'= \sum_{j=1}^r f_j s_j.
	\end{equation*}
	We denote by $\mu_i^d=v_d(f_i)$ the vanishing order of $f_i$ at $d$ and assume without loss of generality that $\mu_1^d\leq \mu_2^d \leq \dots \leq \mu_r^d$. 
	By Remark \ref{rem:exponents} below, we know that 
	\begin{equation}
	\mu_j^d=j-1, \quad 1\leq j \leq r.
	\end{equation} 
	This implies that the $k$-th derivatives $f^{(k)}_j$ satisfy 
	\begin{equation}
	f^{(k)}_j(0)\neq 0 \mbox{ for }k=j-1,\qquad   f^{(k)}_j(0)=0 \mbox{ for }k\geq j. 
	\end{equation}
	As a consequence,  
	\begin{equation}
	\nabla^k_{d/dz}\omega'=\sum_{j=1}^{k+1} f_j^{(k)}(0) s_j,\quad k=0,\dots,l
	\end{equation}
	is a basis of $F_\omega^{(r-1)-l}$ at $0$. 
	Therefore $\omega'$ is a cyclic vector around $d$ and hence $(F_\omega^\bullet \VH_{\Oo},\nabla)$ is an oper. 
	
	Conversely, if $(F^\bullet \VH_{\Oo},\nabla)$ is an oper, then there exists a non-zero meromorphic section $\omega\in \mathcal{M}(C,F^{r-1})$. 
	By the properties of an oper, $\omega$ is a generic cylic vector. 
	
	The last claim follows from the fact that the generic cyclic vector $\omega$ is a meromorphic section of $F^w$ and Griffiths' transversality. 
\end{proof}
Hence opers with a compatible integral structure are equivalent to $\Z$-VHS with a generic cyclic vector. 

\subsection{From \texorpdfstring{$\Z$}{Z}-VHS to Picard--Fuchs equations}\label{ss:FromVHStoPF}

We next recall the relation between polarizable $\Z$-VHS with a generic cyclic vector and Picard--Fuchs equations. 

Let $(\VH_{\Z},Q, F^\bullet)$ be a polarized $\Z$-VHS of weight $w=r-1$ on the punctured disk $\Delta^*\subset \Delta$. 
Then the monodromy $T$ around $0\in \Delta$ is quasi-unipotent by a result of Borel (\cite[Lemma 4.5]{Schmid})
We assume that there exists a cyclic vector $\omega \in H^0(\Delta^*, \Fc^{w})$.
This implies the existence of $a_j\in \Oo(\Delta^*)$ satisfying
\begin{equation}\label{eq:prePF}
\nabla_{d/dz}^r \omega + a_{r-1}(z) \nabla_{d/dz}^{r-1} \omega + \dots + a_0(z) \omega=0. 
\end{equation}

\begin{dfn}
The point $0\in \Delta^*$ is...
\begin{itemize}
	\item ... called a \emph{regular point} of $\nabla$ if $a_j$ extends to holomorphic functions at $0$;
	
	\item ... a \emph{regular singular point} of $\nabla$ if $b_j:=z^{j}a_j$ extends to a holomorphic function at $0$. 
\end{itemize}
\end{dfn}
The Gau\ss--Manin connection is known to be regular singular (\cite[Theorem 4.13]{Schmid}), i.e. the limiting point $0\in \Delta$ is either a regular point or a regular singular point. 

Assume that $0\in \Delta$ is a regular singular point so that 
\begin{equation}\label{eq:prePF2}
\nabla_{z d/dz}^r \omega + b_{r-1}(z) \nabla_{z d/dz}^{r-1} \omega + \dots + b_0(z) \omega=0
\end{equation}
for $b_j\in \Oo(\Delta)$.
If $\gamma\in H^0(\Delta^*, \VH_{\Z}^\vee)$ is a (multi-valued) section, then the (multi-valued) function $f:=Q( \gamma, \omega)$ satisfies the scalar differential equation
\begin{equation}\label{eq:PF}
L_{\mbox{\tiny{PF}}}f:=\theta^r f + b_{r-1}(z) \theta^{r-1} f+ \dots + b_0(z) f=0, \quad \theta= z \frac{d}{dz}. 
\end{equation}
It is called the Picard--Fuchs equation associated with $\omega$ and is an ordinary differential equation with a regular singularity at $0$. 
Its solutions, the \emph{periods}, form a local system which is denoted by $Sol(L_{\mbox{\tiny{PF}}})$. 
The next lemma is immediate. 
\begin{lem}\label{lem:LS}
Let $\omega\in H^0(\Delta^*,F^w)$ be a cyclic vector as before. 
Then the morphism
\begin{equation}\label{eq:sol}
\VH_{\Z} \mapsto Sol(L_{\mbox{\tiny{PF}}}), \quad \gamma\mapsto Q( \gamma, \omega) 
\end{equation}
is an isomorphism of local systems.
In particular, $\omega=\sum_{i=1}^r f_i s_i$ for a multi-valued frame $s_i$ of $\VH_{\Z}$ on $\Delta^*$ and $f_i$ corresponds to the $\gamma_j$ such that $Q(\gamma_j,s_i)=\delta_{ij}$ under the isomorphism \eqref{eq:sol}.
\end{lem}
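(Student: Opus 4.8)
The plan is to check the three assertions in turn---that the pairing map lands in $Sol(L_{\mathrm{PF}})$, that it is a morphism of local systems, and that it is an isomorphism---using repeatedly that both the section $\gamma$ and the polarization $Q$ are flat for the Gau\ss--Manin connection $\nabla$. First I would verify that $f:=Q(\gamma,\omega)$ solves the Picard--Fuchs equation. Since $\gamma$ is a (multi-valued) flat section and $Q$ is $\nabla$-flat, differentiation passes through the pairing: $\theta\, Q(\gamma,\omega)=Q(\gamma,\nabla_{z d/dz}\omega)$, and iterating gives $\theta^k Q(\gamma,\omega)=Q(\gamma,\nabla_{z d/dz}^k\omega)$ for all $k$. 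Applying the operator $L_{\mathrm{PF}}$ of \eqref{eq:PF} termwise and pulling $Q(\gamma,-)$ out then yields $L_{\mathrm{PF}} f = Q\bigl(\gamma,\ \nabla_{z d/dz}^r\omega+b_{r-1}\nabla_{z d/dz}^{r-1}\omega+\cdots+b_0\omega\bigr)$, which vanishes by the Picard--Fuchs relation \eqref{eq:prePF2} satisfied by $\omega$. Hence $f\in Sol(L_{\mathrm{PF}})$.

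Next I would observe that the map \eqref{eq:sol} is a morphism of local systems precisely because it is assembled from flat data, so it intertwines the monodromy of $\VH_\Z$ with that of $Sol(L_{\mathrm{PF}})$. To upgrade this to an isomorphism I would argue by rank and injectivity. Both local systems have rank $r$: for $\VH_\Z$ this is the hypothesis $w=r-1$, while $Sol(L_{\mathrm{PF}})$ has rank $r$ as the local solution space of an order-$r$ linear ODE. It thus suffices to prove injectivity. If $Q(\gamma,\omega)\equiv 0$, then $Q(\gamma,\nabla_{z d/dz}^k\omega)=\theta^k Q(\gamma,\omega)\equiv 0$ for $k=0,\dots,r-1$; but $\omega$ being a cyclic vector means $\{\nabla_{z d/dz}^k\omega\}_{k=0}^{r-1}$ is a frame of $\VH_\Oo$ on $\Delta^*$, so non-degeneracy of $Q$ forces $\gamma=0$. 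The map is therefore injective, hence an isomorphism by the rank count.

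Finally, for the \emph{in particular} clause I would fix a flat frame $s_i$ of $\VH_\Z$ over $\Delta^*$ and its $Q$-dual frame $\gamma_j$, defined by $Q(\gamma_j,s_i)=\delta_{ij}$; writing $\omega=\sum_i f_i s_i$, bilinearity gives $Q(\gamma_j,\omega)=f_j$, so under \eqref{eq:sol} each $\gamma_j$ maps to the coefficient $f_j$, identifying the $f_i$ with the periods. The step most in need of care---though hardly a genuine obstacle---is the flatness of $Q$ for $\nabla$, since that is exactly what lets $\theta$ commute with the pairing in the first step; once this is granted the remaining verifications are formal, which is why the lemma can be called immediate.
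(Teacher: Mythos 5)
Your proof is correct and is exactly the verification the paper has in mind: the paper offers no argument at all, simply declaring the lemma ``immediate,'' and your three steps (flatness of $Q$ and of $\gamma$ lets $\theta$ pass through the pairing so that $L_{\mbox{\tiny{PF}}}$ annihilates $Q(\gamma,\omega)$; cyclicity of $\omega$ plus non-degeneracy of $Q$ give injectivity; the rank count closes the argument, and the dual-frame computation gives the ``in particular'' clause) are the standard unwinding of that claim. The only cosmetic caveat, inherited from the paper's own formulation, is that the ``isomorphism'' should be read after complexifying, i.e.\ as $\VH_{\Z}\otimes_{\Z}\C \to Sol(L_{\mbox{\tiny{PF}}})$, since the target is a local system of $\C$-vector spaces while the source is a local system of $\Z$-modules.
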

\begin{rem}
If $\omega$ is not cyclic but only non-vanishing on $\Delta^*$, then (\ref{eq:sol}) is only a surjection. 
\end{rem}
In particular, if $T$ is the local quasi-unipotent monodromy of $\VH_{\Z}$ around a puncture $d\in D$, then the local monodromy of the periods is the dual $T^\vee$. 
If we represent $T$ as a matrix $A$ with respect to a basis, then $T^\vee$ corresponds to $(A^{-1})^t$ in the dual basis. 
Under the isomorphism $\VH_{\Z}\cong \VH_{\Z}^\vee$ induced by $Q$ we therefore identify $T$ with $T^\vee$. 
In particular, the eigenvalues $\lambda_i$ of $T$ do not only satisfy $\lambda_i\in U(1)\subset \C^\times$ but also $\lambda_1\cdots \lambda_r=1$. 

These eigenvalues are related to the Gau\ss--Manin connection: in the frame $\nabla^k_{z d/dz} \omega$, $k=0,\dots, r-1$, the connection $1$-form is given by 
\begin{equation}
B(z) \tfrac{dz}{z}:=\begin{pmatrix} 0 & 0 & 0 & -b_0(z) \\
1 & 0 & \ddots & -b_1(z) \\
\vdots & \ddots & \ddots & \vdots \\
0 & \cdots & 1 & -b_{r-1}(z) \end{pmatrix}  \tfrac{dz}{z},
\end{equation}
cf. Example \ref{ex:opers}.
It is known (\cite[1.17.2.]{Deligne}) that $\exp(2\pi i B(0))$ has the same eigenvalues $\lambda_j$ as $T$. 
The eigenvalues of $\exp(2\pi  i B(0))$ are in turn of the form $\lambda_j=\exp(2\pi i \mu_j)$ for the eigenvalues $\mu_j$ of $B(0)$.
These are the roots of the polynomial
\begin{equation}\label{eq:indicial}
p(X):=X^r+b_{r-1}(0) X^{r-1}+ \dots + b_0(0)\in \C[X]
\end{equation}
(and $B(0)$ is its companion matrix). 
Since $\lambda_j=\exp(2\pi i \mu_j)\in U(1)$, we must have $\mu_j\in \R$ and we choose the ordering $\mu_1\leq \dots \leq \mu_r$. 

On the other hand, $\mu_j$ are called \emph{exponents} of the Picard--Fuchs equation (\ref{eq:PF}).
They determine the structure of solutions to \eqref{eq:PF}.
Let $\mu_{i_1}< \dots < \mu_{i_s}$ be the pairwise distinct exponents where $\mu_{i_j}$ has multiplicity $m_{j}$. 
The Frobenius method (\cite{Frobenius}, \cite[\S 3]{CoddingtonLevinson}) shows that a basis of solutions to (\ref{eq:PF}) is given by multi-valued functions of the form
\begin{equation}\label{eq:basismultivalued}
\sum_{k=1}^{m_j} z^{\mu_{i_j}} (\log z)^{k-1} g^j_k(z), \quad j=1,\dots, s,
\end{equation}
for holomorphic functions $g_k^j$ on $\Delta$ with $g^j_k(0)\neq 0$. 

\begin{rem}\label{rem:exponents}
	If $0\in \Delta$ is a regular point of $\nabla$, then the monodromy $T$ is trivial and the $\Z$-VHS canonically extends to $0$. 
	In this case the exponents $\mu_j$ are defined as well and are given as follows. 
	If $k=v_0(\omega)$ is the order of $\omega$ at $0$, then $\mu_j=k+(j-1)$. 
\end{rem}

The previous discussion globalizes: 
let $C^\circ \subset C$ be the complement of a reduced divisor $D_s\subset C$  in a compact Riemann surface $C$. 
If $(\VH_{\Z},F^\bullet,Q)$ is a polarized $\Z$-VHS of weight $w=r-1$ on $C^\circ$ with a generic cyclic vector $\omega\in \mathcal{M}(C^\circ, F^w)$, then the exponents $\mu_1^u\leq \dots \leq \mu_r^u$ are defined for any $u\in C$ by the local discussion above. 
Note that $\mu_j^u$ are independent of a local chart around each $u\in C$ and only depend on $\omega$. 
Moreover, if $f\in \mathcal{M}(C)$ is a non-zero meromorphic function, then the exponents $\tilde{\mu}_j^u$ defined by $\tilde{\omega}=f\omega$ are given by $\tilde{\mu}_j^u=\mu_j^u+v_u(f)$ for the order $v_u(f)$ of $f$ at $u$. 

Therefore $\Z$-VHS together with a generic cylic vector determine local Picard-Fuchs equations and their exponents $\mu_j^u$.
The later will play a crucial role in the relation between $\Z$-VHS with a generic cyclic vector and (parabolic) Higgs bundles as we explain next. 

\section{From \texorpdfstring{$\Z$}{Z}-VHS to (parabolic) Higgs bundles}\label{s:VHSHiggs}
In the beginning of this section, we review the relationship between (complex) variations of Hodge structures over a compact Riemann surface $\cu$ and Higgs bundles $(E,\varphi)$ such that $(E,\varphi)$ is isomorphic to $(E,\zeta \varphi)$ for any $\zeta\in \C^{\times}$.
These are called systems of Hodge bundles, cf. \cite{SimpsonUbiquity} (also see \cite[Chapter 13]{CarlsonEtAl2}). 
Afterwards we explain how $\Z$-VHS determine parabolic Higgs bundles. 
Finally, we determine, in the presence of a generic cyclic vector, the degrees of the resulting parabolic Higgs bundles in terms of the exponents introduced in the last section. 

\subsection{From \texorpdfstring{$\Z$}{Z}-VHS to Higgs bundles with harmonic metric}
Let $\VH=(\VH_{\Z}, Q,  F^\bullet\VH_{\Oo})$ be a polarized $\Z$-VHS of weight $w$ over the Riemann surface $\cu$. 
The local system $\VH_{\Z}$ induces the flat \emph{smooth} Gau\ss--Manin connection $\nabla_{\C}$ on the smooth bundle $\VH_{sm}$ underlying $\VH_{\Oo}$. 
Griffiths' transversality and the smooth decomposition (\ref{eq:hpqbdl}) implies that 
\begin{equation}\label{eq:smGM}
 \nabla_{\C}= \mathsf{D}+ \varphi + \psi
\end{equation}
where\footnote{Here $\Omega^k_{sm}(\VH^{p,q})$ stands for smooth $k$-forms with values in the smooth bundle $\VH^{p,q}$ (dropping the subscript `sm' for $\VH^{p,q}$).}  $\mathsf{D}:\Omega^0_{sm}(\VH^{p,q})\to \Omega^1_{sm}(\VH^{p,q})$ is a connection preserving the $(p,q)$-types and
\begin{align*}
&\varphi:\Omega^0_{sm} (\VH^{p,q}) \to \Omega_{sm}^{1,0}(\VH^{p-1,q+1}), \\
&\psi: \Omega^0_{sm} (\VH^{p,q}) \to \Omega_{sm}^{0,1}(\VH^{p+1,q-1}).
\end{align*}
Note that the holomorphic Gau\ss--Manin connection $\nabla$ is just the $(1,0)$-part of $\nabla_{\C}$, 
\begin{equation*}
\nabla=\nabla_{\C}^{1,0}=\mathsf{D}^{1,0}+\varphi.
\end{equation*}
In particular, we recover the filtered holomorphic bundle $F^\bullet \VH_{\Oo}$ with holomorphic connection $\nabla$.
\begin{lem}\label{l:orth}
The connection $\mathsf{D}$ is the Chern connection for the Hodge metric $h$ and the direct sum holomorphic bundle $E=\bigoplus_{p+q=w} E^{p,q}$ (with respect to the holomorphic structure $\mathsf{D}^{0,1}$).
Moreover,
\begin{equation*}
h(\varphi(s_0),s_1)=h(s_0,\psi(s_1)) 
\end{equation*}
for all (local) sections $s_0,s_1$ of $E$ so that $\psi=\varphi^{\dagger_h}$. 
\end{lem}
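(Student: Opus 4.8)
The plan is to exploit three structural properties of the smooth Gau\ss--Manin connection $\nabla_{\C}$ appearing in \eqref{eq:smGM}: it is flat; it is \emph{real}, i.e. $\overline{\nabla_{\C}s}=\nabla_{\C}\bar{s}$, since it is induced by the underlying real local system; and it is compatible with the polarization, $d\,Q(s_0,s_1)=Q(\nabla_{\C}s_0,s_1)+Q(s_0,\nabla_{\C}s_1)$, because $Q$ is locally constant. Throughout I would use the first Riemann--Hodge relation \ref{eq:RHi}, which forces $Q(\VH^{a,b},\VH^{c,d})=0$ unless $(c,d)=(b,a)$; in particular the decomposition \eqref{eq:hpqbdl} is $h$-orthogonal, so by bilinearity every identity reduces to sections of pure Hodge type.

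First I would pin down the holomorphic structure. Splitting into types gives $\nabla_{\C}^{0,1}=\mathsf{D}^{0,1}+\psi$, and $\psi$ strictly raises the Hodge filtration degree, so $\nabla_{\C}^{0,1}$ preserves each $F^p\VH_{\Oo}$ and the structure it induces on the graded piece $E^{p,q}=F^p/F^{p+1}$ is precisely $\mathsf{D}^{0,1}$ (the $\psi$-part lands in $F^{p+1}$). Hence $\mathsf{D}^{0,1}$ is the holomorphic structure of $E=\bigoplus_{p+q=w}E^{p,q}$.

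Next I would verify $h$-compatibility of $\mathsf{D}$. For $s_0,s_1\in\VH^{p,q}$ (enough by orthogonality) write $h(s_0,s_1)=i^{p-q}Q(s_0,\overline{s_1})$ and differentiate using $Q$-compatibility and reality of $\nabla_{\C}$. Since $\overline{s_1}\in\VH^{q,p}$, relation \ref{eq:RHi} kills every component of $\nabla_{\C}s_0$ except the type-preserving part $\mathsf{D}s_0$, and symmetrically only $\overline{\mathsf{D}s_1}$ survives in the second term; this gives $d\,h(s_0,s_1)=h(\mathsf{D}s_0,s_1)+h(s_0,\mathsf{D}s_1)$. Together with the previous paragraph and the uniqueness of the Chern connection, $\mathsf{D}$ is the Chern connection of $(E,h)$.

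Finally, for $\psi=\varphi^{\dagger_h}$ I would take $s_0\in\VH^{p,q}$ and $s_1\in\VH^{p-1,q+1}$ and differentiate the \emph{identically zero} pairing $Q(s_0,\overline{s_1})=0$ (zero by \ref{eq:RHi}, as $\overline{s_1}\in\VH^{q+1,p-1}$). Selecting the surviving Hodge components yields $Q(\varphi s_0,\overline{s_1})+Q(s_0,\overline{\psi s_1})=0$. Converting to $h$ via the Weil operator, using $W\varphi s_0=i^{(p-1)-(q+1)}\varphi s_0=-i^{p-q}\varphi s_0$ and $Ws_0=i^{p-q}s_0$, the factor $i^{-2}=-1$ cancels the minus sign in the differentiated relation and produces $h(\varphi(s_0),s_1)=h(s_0,\psi(s_1))$. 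The one genuinely delicate step, which I would write out carefully, is exactly this type-and-phase bookkeeping: identifying which graded component of $\nabla_{\C}$ pairs nontrivially under $Q$ and matching the powers of $i$ coming from $W$; everything else follows mechanically from flatness, reality, and local constancy of $Q$.
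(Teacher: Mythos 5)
Your proposal is correct and follows essentially the same route as the paper's proof: differentiate the $Q$-pairings of pure-type sections using local constancy of $Q$ and the decomposition \eqref{eq:smGM}, kill all but the type-compatible components via the bilinear relation \ref{eq:RHi}, and convert to $h$ via the Weil operator. If anything, you are more careful than the paper on the final step — the paper's displayed conclusion $0=h(\varphi(s_0),s_1)+h(s_0,\psi(s_1))$ suppresses the Weil-operator factor $i^{(p-1)-(q+1)}=-i^{p-q}$ whose sign flip turns the differentiated $Q$-relation into the asserted equality $h(\varphi(s_0),s_1)=h(s_0,\psi(s_1))$, a point your proposal makes explicit.
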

\begin{proof}
First of all, the holomorphic structure on $E$ coincides with $\mathsf{D}^{0,1}$.
We prove that $\mathsf{D}$ is compatible with $h$. 
Let $s_0, s_1$ be local sections of $E^{p,q}$. 
Then we compute (the sign is determined by (\ref{eq:WeilOp}))
\begin{align*}
dh(s_0,s_1)&=\pm i ~ dQ(s_0,\bar{s}_1)\\
&=\pm i \left(Q(\nabla_{\C}s_0,\bar{s}_1)+Q(s_0,\nabla_{\C}\bar{s}_1)\right)    & \mbox{(flatness of $Q$)}\\
&=\pm i \left( Q(\mathsf{D}s_0,\bar{s}_1)+Q(s_0,\mathsf{D}\bar{s}_1)\right)  & \mbox{(by \eqref{eq:smGM})} \\ 
&=h(\mathsf{D}s_0,\bar{s}_1)+h(s_0,\mathsf{D}\bar{s}_1), 
\end{align*}
i.e. $D$ is $h$-unitary. Thus, $D$ is the Chern connection.

Now let $s_0$ and $s_1$ be (local) sections of $E^{p,q}$ and $E^{p',q'}$ respectively with $p\neq p'$ (hence $q\neq q'$). 
A similar computation as before together with $h(s_0,s_1)=0$ shows 
\begin{equation*}
0=h(\varphi (s_0),s_1)+h(s_0,\psi( s_1))
\end{equation*}
which concludes the proof. 
\end{proof}

The smooth Gau\ss--Manin connection and the previous lemma motivates a weaker notion than an $R$-VHS for $R=\Z,\Q,\R$: 
\begin{dfn}
	A \emph{complex variation of Hodge structure} ($\C$-VHS) of weight $w$ is a pair $(V,\nabla_{\C})$ consisting of a smooth bundle $V$ over $C$ with a decomposition 
	\begin{equation}\label{eq:Vpq}
	V=\oplus_{p+q=w} V^{p,q}
	\end{equation}
	and a flat connection $\nabla_{\C}$ admitting a decomposition as in \eqref{eq:smGM}. 
	A \emph{polarization} is a Hermitian metric $h$ on $V$ such that \eqref{eq:Vpq} is orthogonal with respect to $h$ and $(-1)^p h(s,s)>0$ for any non-zero (local) section $s$ of $V^{p,q}$.
\end{dfn}
In particular, every (polarized) $R$-VHS, $R=\Z,\Q,\R$ induces a (polarized) $\C$-VHS. 
The converse is false in general. 
There might not be an underlying locally constant sheaf of $R$-modules inducing a Hodge filtration. 

\bigskip

Every $\C$-VHS gives rise to a Higgs bundle:
\begin{lem}[\cite{SimpsonUbiquity}]\label{l:D}
Let $(V,\nabla_\C,h)$ be a polarized $\C$-VHS of weight $w$ with decomposition $V=\oplus_{p+q=w} V^{p,q}$ and $\nabla_{\C}=\mathsf{D}+\varphi+\varphi^{\dagger_h}$.
Then the pair 
\begin{equation}
(\mathcal{V}:=(V,\mathsf{D}^{0,1}), \varphi)
\end{equation}
is a Higgs bundle on $\cu$.
It is the \emph{system of Hodge bundles} associated to $\C$-VHS.
In particular, $(\mathcal{V},\varphi,h)$ is a harmonic Higgs bundle on $\cu$. 
It is $\C^\times$-invariant (up to isomorphism), i.e. $(\mathcal{V},\varphi)$ is isomorphic to $(\mathcal{V},\lambda \varphi)$ for each $\lambda\in \C^\times$. 
Conversely, every Higgs bundle with this property is of this form. 
\end{lem}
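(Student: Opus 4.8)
The plan is to read off all the required structure from the flatness $\nabla_{\C}^2=0$ together with the special geometry of a curve, and to treat the converse separately via the $\C^\times$-fixed-point analysis. First I would establish that $(\mathcal{V},\varphi)$ is a Higgs bundle. The holomorphic structure $\bar{\partial}_{\mathcal{V}}:=\mathsf{D}^{0,1}$ is automatically integrable because $\cu$ is a curve (there are no $(0,2)$-forms), so $\mathcal{V}=(V,\mathsf{D}^{0,1})$ is a genuine holomorphic bundle; moreover, since $\mathsf{D}$ preserves the $(p,q)$-grading, each $E^{p,q}$ is a holomorphic subbundle and $\mathcal{V}=\oplus_{p+q=w}E^{p,q}$ holomorphically. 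To see that $\varphi$ is holomorphic, I would decompose $\nabla_{\C}^2=0$ according to the shift it induces on the Hodge grading: as $\mathsf{D}$ preserves the grading, $\varphi$ lowers $p$ by one, and $\varphi^{\dagger_h}$ raises $p$ by one, the five homogeneous components (shifts $+2,+1,0,-1,-2$) must vanish independently. The shift-$(-1)$ component is $\mathsf{D}\varphi=0$; its $(2,0)$-part vanishes for dimension reasons on a curve, so what survives is $\mathsf{D}^{0,1}\varphi=\bar{\partial}_{\mathcal{V}}\varphi=0$. The integrability $\varphi\wedge\varphi=0$ is automatic since $\varphi\wedge\varphi$ is a $(2,0)$-form. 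Hence $(\mathcal{V},\varphi)$ is a Higgs bundle, and by construction it is the system of Hodge bundles attached to the $\C$-VHS.

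For harmonicity I would extract Hitchin's equations from the remaining components. The grading-preserving (shift-$0$) component, restricted to $(1,1)$-forms, reads $F_{\mathsf{D}}+[\varphi,\varphi^{\dagger_h}]=0$ (the $(2,0)$- and $(0,2)$-parts vanishing on a curve), while Lemma \ref{l:orth} identifies $\mathsf{D}$ with the Chern connection of $(\bar{\partial}_{\mathcal{V}},h)$ and $\psi$ with $\varphi^{\dagger_h}$. Together with $\bar{\partial}_{\mathcal{V}}\varphi=0$ this is precisely \eqref{sd-eqs}, so $h$ is harmonic and $(\mathcal{V},\varphi,h)$ is a harmonic Higgs bundle. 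The $\C^\times$-invariance I would exhibit explicitly: because the Hodge decomposition is holomorphic, the gauge transformation $g_\lambda:=\oplus_{p+q=w}\lambda^{p}\,\mathrm{id}_{E^{p,q}}$ is a holomorphic automorphism of $\mathcal{V}$, and a direct computation on $E^{p,q}$ (using that $\varphi$ drops $p$ by one) gives $g_\lambda^{-1}\,\varphi\,g_\lambda=\lambda\varphi$, so $(\mathcal{V},\varphi)\cong(\mathcal{V},\lambda\varphi)$ for every $\lambda\in\C^\times$.

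The main obstacle is the converse, which I would handle by the $\C^\times$-fixed-point argument of Simpson (\cite{SimpsonUbiquity}). Suppose $(\mathcal{V},\varphi)$ is a (poly)stable Higgs bundle with $(\mathcal{V},\varphi)\cong(\mathcal{V},\lambda\varphi)$ for all $\lambda$, realized by holomorphic isomorphisms $f(\lambda)$ with $f(\lambda)\,\varphi\,f(\lambda)^{-1}=\lambda\varphi$. Stability makes $f(\lambda)$ unique up to scalar, so after normalization $\lambda\mapsto f(\lambda)$ integrates to an algebraic $\C^\times$-action and yields a weight decomposition $\mathcal{V}=\oplus_\alpha\mathcal{V}_\alpha$ on which $f(\lambda)$ acts by $\lambda^\alpha$. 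The intertwining relation forces $\varphi(\mathcal{V}_\alpha)\subset\mathcal{V}_{\alpha-1}\otimes K_{\cu}$, so relabelling $\alpha$ by a Hodge index $p$ produces exactly the grading of a system of Hodge bundles with $\varphi$ of Hodge type $(-1,+1)$. The remaining point is the polarization: one invokes the non-abelian Hodge correspondence to equip $(\mathcal{V},\varphi)$ with its harmonic metric, and the uniqueness of that metric forces it to be $\C^\times$-invariant, hence compatible with the grading and satisfying $(-1)^{p}h(s,s)>0$. This recovers the data of a polarized $\C$-VHS inducing $(\mathcal{V},\varphi)$. I expect the delicate steps to be the algebraicity of the $\C^\times$-action on the fixed Higgs bundle and the invariance of the harmonic metric, both of which rest on stability and the uniqueness in the Hitchin--Kobayashi correspondence rather than on any curve-specific simplification.
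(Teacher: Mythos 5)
Your proposal is correct and takes essentially the same route as the paper: the paper likewise decomposes the flatness $\nabla_{\C}^2=0$ by Hodge type to obtain $(\mathsf{D}^{0,1})^2=0$, $\mathsf{D}\varphi=0$ and $F^{\mathsf{D}}+[\varphi,\varphi^{\dagger_h}]=0$ (using Lemma \ref{l:orth} to identify $\mathsf{D}$ with the Chern connection and $\psi$ with $\varphi^{\dagger_h}$), and then disposes of the $\C^\times$-invariance and its converse by the standard Simpson fixed-point argument, which it only cites as ``standard'' via eigenspace decompositions. Your write-up simply fills in the details (the explicit gauge transformation $g_\lambda$ and the stability/uniqueness argument) that the paper delegates to \cite{SimpsonUbiquity}.
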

\begin{rem}
	If $C$ is compact, then every $\C^\times$-fixed Higgs bundle is induced by a complex VHS under the non-abelian Hodge correspondence. 
	We emphasize that such Higgs bundles are often referred to as VHS in the literature. 
	However, in our context it is crucial to distinguish between VHS and systems of Hodge bundles (as originally done by Simpson (\cite{SimpsonUbiquity})).
\end{rem}
\begin{proof}
Type considerations together with  the decomposition $\nabla_{\mathbb{C}} = D + \varphi + \varphi^{\dagger_h}$ in (\ref{eq:smGM}) imply that the flatness condition $\nabla_{\C}^2=0$ decomposes into the following equations
\begin{equation}\label{eq:ttlike}
\begin{gathered} 
(\mathsf{D}^{0,1})^2=0 =(\mathsf{D}^{1,0})^2, 
\\
\mathsf{D}\varphi=0=  \mathsf{D}\varphi^{\dagger_h},  
\\
F^{\mathsf{D}}+[\varphi,\varphi^{\dagger_h}]=0. 
\end{gathered}
\end{equation}
For example, $\mathsf{D}\varphi$ is the only summand in $\nabla_{\C}^2=0$ that maps 
\begin{equation*}
\VH^{p,q}\to \Omega_{sm}^1(\VH^{p-1,q+1})). 
\end{equation*}
and hence has to be zero. 
Its $(0,1)$-part gives $\mathsf{D}^{0,1}\varphi=0$ as claimed. 

The statement about the $\C^\times$-invariance follows by a standard argument using a decomposition of $\mathcal{V}$ into the generalized eigenspaces of $\varphi$. 
\end{proof}

\begin{rem}\label{rem:tthit}
The equations \eqref{eq:ttlike} are clearly equivalent to the $tt^*$-equations \eqref{tt-eqs} by setting $D=\mathsf{D}$ and $\mathsf{C}=\varphi+\varphi^{\dagger_h}$.
Note that the equations $[\mathsf{C}',\mathsf{C}']=[\varphi,\varphi]=0$, and analogously for $\mathsf{C}''=\varphi^{\dagger_h}$, are here trivially satisfied for type and dimension reasons. 
\end{rem}
The Higgs field $\varphi$ can be expressed in terms of the holomorphic Gau\ss--Manin connection: 
under the isomorphism (\ref{eq:hdgbdls}), its $(w-q,q)$-components (all other components are zero) correspond to 
\begin{gather}\label{eq:KS}
\begin{split}
\varphi^{q}\colon &\Fc^{w-q}/\Fc^{w-q+1} \to \Fc^{w-q+1}/\Fc^{w-q}\otimes \Omega^1_C ,\\
&s \, \, \mathrm{mod}\, \Fc^{w-q+1}\mapsto \nabla s \, \, \mathrm{mod}\, \Fc^{w-q}. 
\end{split}
\end{gather}

\subsection{Parabolic Higgs bundles from Deligne's canonical extension}\label{ss:deligne}
Let $D\subset \cu$ be a reduced divisor on the compact Riemann surface $\cu$ and $\cuo:=\cu-D$ be its complement. 
Further let $\VH=(\VH_{\Z}, Q, \Fc^\bullet)$ be a polarized integral VHS on $\cuo$. 
\begin{ex}
Such examples naturally arise from geometry. 
For example, let $\pi:X\to C$ be a compact elliptic surface. 
Then the middle cohomology groups $H^1(X_u,\Z)$ of the smooth fibers $X_u$, $u\in \cu$, define a polarized $\Z$-VHS of weight $1$ over the smooth locus $\cuo\subset \cu$ of $\pi$ and $D$ is the divisor of singular fibers. 
\end{ex}
We give one example of an extension of the VHS $\VH$ on $\cuo$ to $\cu$ as a filtered holomorphic bundle with logarithmic connection.
This is Deligne's canonical extension $(\hat{\VH},\hat{\nabla})$ (\cite[\S II, Proposition 5.4]{Deligne}) which eventually determines a parabolic Higgs bundle.  

Since an extension across $u\in D$ is a local question, it suffices to consider the case $\cuo=\Delta^*\subset \cu=\Delta$. 
Let
\begin{equation*}
e:\mathbb{H}\to \Delta^*,\quad e(\tau)=\exp(2\pi i \tau)
\end{equation*}
be the universal covering of $\Delta^*$ and $T\in \mathrm{Aut}(\VH_{\Z,z_0})$ be the monodromy of $\VH_{\Z}$ for a fixed $z_0\in \Delta^*$. 
Recall that $T$ is necessarily quasi-unipotent\footnote{All what follows works for $\C$-VHS if we assume that the monodromy around the punctures is quasi-unipotent. Since we are interested in geometric examples, we phrase everything in terms of $\Z$-VHS.}. 
Hence any multi-valued section $s$ of $\VH_{\Z}$ on $\Delta^*$ satisfies
\begin{equation}\label{eq:mon}
e^* s(\tau+1)=T(e^*s)(\tau). 
\end{equation}
If $T=T_s T_u$ is the Jordan decomposition of $T$ into its semisimple part $T_s$ and unipotent part $T_u$, then we define
\begin{gather*}
N=N_u+N_s, \\
N_u=\frac{1}{2\pi i} \log T_u, \quad N_s=\frac{1}{2\pi i} \log T_s. 
\end{gather*}
Here $\log T_s$ is the logarithm of $T_s$ which is determined by requiring that its eigenvalues $\nu$ satisfy
\begin{equation*}
-1< \nu \leq 0.
\end{equation*}
In particular, $\exp(2\pi i \nu)\in S^1\subset \C^\times$ are the eigenvalues of $T_s$. 
Since $\exp(2\pi i N)=T$, the section 
\begin{equation}\label{eq:shat}
\hat{s}(\tau):=\exp(-2\pi i N \tau) e^*s(\tau), \quad s\in \VH_{\Z}(\Delta^*)
\end{equation}
is invariant under monodromy and descends to $\Delta^*$. 
Sections of the form $\hat{s}$ define Deligne's canonical extension $\hat{\VH}$ on $\Delta$. 
Deriving equation (\ref{eq:shat}) implies that the holomorphic Gau\ss--Manin connection $\nabla$ extends to the logarithmic connection $\hat{\nabla}$ with residue
\begin{equation*}
\mathrm{res}_0(\hat{\nabla})=-N. 
\end{equation*}
In particular, the eigenvalues of the residue $-N$ lie in $[0,1)$ which uniquely determines $\hat{\VH}$. 
Finally, the holomorphic subbundles $\Fc^p\subset \VH_{\Oo}$ extend to holomorphic subbundles $\hat{\Fc}^p\subset \hat{\VH}_{\Oo}$  such that 
\begin{equation}
    \hat{\nabla}:\hat{F}^p \to \hat{F}^{p-1}\otimes \Omega^1_C(\log D). 
\end{equation}

This due to \cite[Theorem 4.13]{Schmid} for unipotent monodromy and was generalized by \cite[\S 2.5 (iii)]{Kollar} to quasi-unipotent monodromy.

The extension $(\hat{\VH},\hat{\nabla})$ of $(\VH,\nabla)$ determines a parabolic Higgs bundle on $\cu$ as follows. 
The underlying holomorphic bundle is
\begin{equation}\label{eq:Ehatpq}
\hat{E}=\ \bigoplus_{p+q=w} \hat{E}^{p,q},\quad \hat{E}^{p,q}:= \hat{\Fc}^p/\hat{\Fc}^{p+1},
\end{equation}
on $\cu$.
The logarithmic connection $\hat{\nabla}$ induces the maps 
\begin{equation}\label{eq:phihat}
\hat{\varphi}^{q}:\hat{E}^{w-q,q} \to \hat{E}^{w-q,q}\otimes \Omega^1_C(\log D)
\end{equation}
analogously constructed as in (\ref{eq:KS}). 
Hence they define the meromorphic Higgs field
\begin{equation*}
\hat{\varphi}:=\oplus_{q}\, \hat{\varphi}^{q}:\hat{E}\to \hat{E}\otimes \Omega^1_C(\log D). 
\end{equation*}
To construct the parabolic structure on $\hat{E}_d$ at $d\in D$, we only look at $D=\{0\} \subset \cu=\Delta$ to simplify notation. 
Let \footnote{This convention will become clear in when we consider the exponents of Picard--Fuchs equations, see in particular (\ref{eq:nuj}) and (\ref{eq:kj}).} 
\begin{equation*}
0 \leq -\nu_1 \leq -\nu_2 \leq \dots \leq -\nu_r < 1
\end{equation*}
be the eigenvalues of $N_s$.
Moreover, we let $1 \leq i_1< \dots < i_s \leq r$ be the indices of pairwise distinct eigenvalues with multiplicity $m_j$, $j=1,\dots, s$. 
For any $\nu=\nu_j$ the subspace
\begin{equation*}
\VH_{z_0}^\nu=\{ v\in \VH_{z_0}~|~ (\exp{2\pi i \nu}-T)^N v=0 \quad \mbox{for some }N\in \Z\}
\end{equation*}
is the generalized eigenspaces of $T$ for the eigenvalue $\exp(2\pi i \nu)$. 
The span of $\hat{s}(0)\in \hat{\VH}_0$ for $s\in \VH_{z_0}^\nu$ defines the subspace $\hat{\VH}_{0}^\nu$ for $\nu\in (-1,0]$ and we set
\begin{equation}\label{eq:hnu}
    \hat{\VH}_{0}(\nu) =\bigoplus_{\alpha \geq \nu} \hat{\VH}_{0}^\alpha.
\end{equation}

These subspaces define 
\begin{equation*}
\hat{E}_{0}(\nu)=\bigoplus_{p} \frac{\hat{\VH}_{0}(\nu)\cap \hat{F}_0^p}{\hat{\VH}_{0}(\nu) \cap \hat{F}^{p+1}_0}\subset \hat{E}_0
\end{equation*}
and hence determine the parabolic structure (see \eqref{eq:parabolicstr})
\begin{align}
 \hat{E}_0 = \hat{E}_{0}(-\nu_{i_1}) & \supsetneq \hat{E}_{0}(-\nu_{i_2})  \supsetneq \dots \supsetneq  \hat{E}_{0}(-\nu_{i_s})  \supsetneq 0
\end{align}
with parabolic weights $0 \leq -\nu_{i_1} < -\nu_{i_2} < \dots < -\nu_{i_s} < 1$.

\begin{prop}
Let $\VH$ be a VHS of weight $w$ on $\cu^\circ$ and $\hat{\VH}$ Deligne's canonical extension to $\cu$. 
Then $(\hat{E},\hat{\varphi})$ with the parabolic structure determined by monodromy at each $u\in D$ is a parabolic Higgs bundle. 

\end{prop}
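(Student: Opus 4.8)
The plan is to verify, at each point of the divisor $D$, the four defining properties of a parabolic Higgs bundle for $(\hat{E},\hat{\varphi})$ together with the parabolic structure constructed above: (i) $\hat{E}$ is a holomorphic bundle on $\cu$; (ii) $\hat{\varphi}$ is a logarithmic Higgs field, i.e.\ an $\Oo_{\cu}$-linear map $\hat{E}\to \hat{E}\otimes \Omega^1_{\cu}(\log D)$ with $\hat{\varphi}\wedge\hat{\varphi}=0$; (iii) the parabolic weights define a genuine decreasing flag of each fiber $\hat{E}_d$ by subspaces with weights in $[0,1)$; and (iv) the residue $\res_d(\hat{\varphi})$ is compatible with, i.e.\ preserves, this flag. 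Since $D$ is a finite set of points and all of the data is constructed locally, I would argue at a single puncture, taking $\cuo=\Delta^*\subset\cu=\Delta$ with $D=\{0\}$, and then assemble the pointwise statements.

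Properties (i)--(iii) are essentially bookkeeping built into the construction. The bundle $\hat{E}=\bigoplus_{p+q=w}\hat{\Fc}^p/\hat{\Fc}^{p+1}$ in \eqref{eq:Ehatpq} is holomorphic because Deligne's extension $\hat{\VH}_{\Oo}$ is a holomorphic bundle and each $\hat{\Fc}^p$ is a holomorphic subbundle. For (ii), the logarithmic Griffiths transversality $\hat{\nabla}\colon\hat{\Fc}^p\to\hat{\Fc}^{p-1}\otimes\Omega^1_{\cu}(\log D)$ recorded above induces, on passing to graded quotients, the maps $\hat{\varphi}^q$ of \eqref{eq:phihat}; their $\Oo_{\cu}$-linearity is exactly the statement that the Leibniz term of $\hat{\nabla}$ vanishes modulo the next step of the filtration, and integrability $\hat{\varphi}\wedge\hat{\varphi}=0$ is automatic because $\cu$ is a curve, so $\Omega^2_{\cu}(\log D)=0$. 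For (iii), the weights $-\nu_{i_1}<\dots<-\nu_{i_s}$ lie in $[0,1)$ by the normalization $-1<\nu\leq 0$ imposed on the eigenvalues of $N_s$, and the subspaces $\hat{E}_0(\nu)$ are nested by the definition \eqref{eq:hnu} of $\hat{\VH}_0(\nu)$.

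The heart of the argument is the compatibility (iv). The key input is that the residue $\res_0(\hat{\nabla})=-N=-(N_s+N_u)$ commutes with the semisimple part $T_s$ of the monodromy: indeed $N_s$ is a function of $T_s$ and $N_u$ is a function of the unipotent part $T_u$, and both commute with $T_s$ by the Jordan decomposition. Hence $-N$ preserves every generalized eigenspace $\VH_{z_0}^\nu$ of $T$, and therefore preserves the filtration $\hat{\VH}_0(\nu)=\bigoplus_{\alpha\geq\nu}\hat{\VH}_0^\alpha$. Now take $v\in\hat{\VH}_0(\nu)\cap\hat{\Fc}^p_0$. Griffiths transversality on residues gives $-Nv\in\hat{\Fc}^{p-1}_0$, while the previous observation gives $-Nv\in\hat{\VH}_0(\nu)$, so $-Nv\in\hat{\VH}_0(\nu)\cap\hat{\Fc}^{p-1}_0$. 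Since $\res_0(\hat{\varphi})$ is precisely the map induced by $-N$ on the $\hat{\Fc}^\bullet$-graded pieces, this shows that $\res_0(\hat{\varphi})$ carries $\hat{E}_0(\nu)=\bigoplus_p(\hat{\VH}_0(\nu)\cap\hat{\Fc}^p_0)/(\hat{\VH}_0(\nu)\cap\hat{\Fc}^{p+1}_0)$ into itself. Running the same argument at every $d\in D$ then establishes (iv) and completes the proof.

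The main obstacle I anticipate is not any single hard estimate but the careful tracking of two distinct gradings at once: the passage from the logarithmic connection $\hat{\nabla}$ to the Higgs field $\hat{\varphi}$ is a graded operation for the Hodge filtration $\hat{\Fc}^\bullet$, whereas the parabolic flag is defined through the monodromy eigenspaces $\hat{\VH}_0^\nu$. The argument works cleanly only because one needs just two clean facts — that $\res_0(\hat{\varphi})=\mathrm{gr}_{\hat{\Fc}}(-N)$ and that $-N$ commutes with $T_s$ — so the residue preservation of $\hat{\VH}_0(\nu)$ descends to the $\hat{\Fc}^\bullet$-graded, rather than the full strictness machinery of the limiting mixed Hodge structure; the delicate point is to phrase it so that these two filtrations interact correctly without invoking more than is actually required.
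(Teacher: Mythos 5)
Your proposal is correct and takes essentially the same route as the paper's own (much terser) proof: both come down to the single observation that in Deligne's frame $\res_d(\hat{\nabla})=-N$ and that $N$ commutes with (the semisimple part of) the monodromy $T$, so the induced residue of $\hat{\varphi}$ preserves the generalized-eigenspace flag $\hat{E}_d(-\nu_{i_j})$. The additional verifications you include (holomorphicity of $\hat{E}$, $\Oo_{\cu}$-linearity and integrability of $\hat{\varphi}$, normalization of the weights) are exactly the bookkeeping the paper builds into the construction preceding the proposition and therefore omits from the proof itself.
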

\begin{proof}
Let $T$ be the local monodromy around a puncture $u\in D$ and $s_i$ be a local multi-valued frame of $\VH$.
In the local frame $\hat{s}_i$ of $\hat{\VH}$, we have $Res_u(\hat{\nabla})=-N$. 
Since $[T,N]=0$, it follows that $\hat{\varphi}(\hat{E}_{u}(-\nu_{i_j}))\subset \hat{E}_{u}(-\nu_{i_j})$. 
\end{proof}


\subsection{Parabolic degrees from Picard--Fuchs equations}

As before, let $\cu$ be a compact Riemann surface and $\cuo=C-D \subset C$ a Zariski-dense subset which carries a polarizable $\Z$-VHS $\VH$ of weight $w=r-1$. 
We assume that $\VH$ admits a generic cyclic vector $\omega\in H^0(C^\circ, F^w)$,
In this section we express the degree of the extended Hodge bundle $\hat{\Fc}^{w}\subset \hat{\VH}$ of Deligne's canonical extension in terms of the previously defined exponents $\mu_1^u \leq  \dots \leq \mu_r^u$ for $u\in \cu$.

To state the results, it is convenient introduce the divisor $D_\omega\subset C^\circ$ defined by $\omega$. 
By Remark \ref{rem:exponents} it is given by 
\begin{equation}
D_\omega=\{ u\in C^\circ ~|~\mu_1^u\in \Z-\{ 0\} \}.
\end{equation}
Note that $C'=C^\circ-D_\omega$ is the largest open subset of $C^\circ$ on which $\omega$ is a cyclic vector. 
We further define $\hat{D}:=D_\omega+D\subset D$ the divisor in $C$ on which the extension $\hat{\omega}$ of $\omega$ to $C$, cf. the next proof, possibly vanishes or has poles. 

\begin{prop}\label{prop:deg}
Let $\VH$ be a polarizable $\Z$-VHS of weight $w=r-1$ on $\cuo$ and $\omega\in H^0(\cuo, F^{w})$ be a generic cyclic section.
Further let $\mu_1^u \leq \dots \leq   \mu_{r}^u$, $u\in \cu$, be the exponents of $\omega$. 
Then the degree of the extended Hodge line bundle $\hat{F}^{w}$ of Deligne's canonical extension $\hat{\VH}$ to $\cu$ is given by 
\begin{equation}\label{eq:deg}
\deg (\hat{F}^{w})=\sum_{u\in \cu} \lfloor \mu_1^u \rfloor=\sum_{u\in \hat{D}} \lfloor \mu_1^u \rfloor.
\end{equation}
\end{prop}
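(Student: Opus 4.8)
The plan is to compute $\deg(\hat{F}^{w})$ as the total order of one global meromorphic section of the line bundle $\hat{F}^{w}$, taking that section to be the canonical extension $\hat\omega$ of the generic cyclic vector $\omega$, and then to match its order at each point of $\cu$ with $\lfloor \mu_1^u\rfloor$. First I would record that $\hat{F}^{w}$ is genuinely a line bundle: by Proposition \ref{lem:cyclicoper} the presence of a generic cyclic vector forces type $(1,\dots,1)$, so $F^{w}=F^{r-1}$ has rank one on $\cuo$, and its Deligne extension is a line bundle on $\cu$. Since $\omega\in H^0(\cuo,F^{w})$ is holomorphic on $\cuo$, it extends to a nonzero meromorphic section $\hat\omega$ of $\hat{F}^{w}$, and because the degree of a line bundle on a compact Riemann surface equals the total order of any nonzero meromorphic section, $\deg(\hat{F}^{w})=\sum_{u\in \cu} v_u(\hat\omega)$. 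It then suffices to prove the local identity $v_u(\hat\omega)=\lfloor \mu_1^u\rfloor$ at every $u$, since only $u\in\hat{D}$ can give a nonzero contribution.

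At a regular point $u\in\cuo$ the argument is immediate: the monodromy is trivial, so $\hat{\VH}=\VH$ and $\hat\omega=\omega$ near $u$. By Remark \ref{rem:exponents} one has $\mu_j^u=v_u(\omega)+(j-1)$, whence $\mu_1^u=v_u(\omega)\in\Z$ and $v_u(\hat\omega)=v_u(\omega)=\mu_1^u=\lfloor\mu_1^u\rfloor$. These terms vanish off $D_\omega$, so they contribute exactly $\sum_{u\in D_\omega}\lfloor\mu_1^u\rfloor$.

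The heart of the proof is the regular singular case $u\in D$. Fix a local coordinate $z$ centered at $u$ and expand $\omega=\sum_i f_i\,s_i$ as in Lemma \ref{lem:LS}, where $s_i$ is a multivalued flat frame and the $f_i$ form a basis of solutions of the local Picard--Fuchs equation. Passing to Deligne's frame via $\hat s=z^{-N}s$, as in \eqref{eq:shat}, the coefficients of $\hat\omega$ in the holomorphic frame $\hat s$ are the single-valued, meromorphic entries of $z^{N}f$. By the Frobenius description \eqref{eq:basismultivalued}, the smallest power of $z$ occurring among the periods is $z^{\mu_1^u}$, and it is realized with nonzero leading coefficient by the solution attached to the smallest exponent. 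The operator $z^{N}$ has eigenvalues lying in the fundamental domain and congruent modulo $\Z$ to the exponents, so it multiplies the contribution with exponent $\mu_i^u$ by $z^{-\{\mu_i^u\}}$, shifting $z^{\mu_i^u}$ to $z^{\lfloor\mu_i^u\rfloor}$; the polynomial-in-$\log z$ factors produced by the unipotent part $N_u$ do not change the order in $z$. Taking the minimum over $i$, and using monotonicity of the floor together with $\mu_1^u\le\cdots\le\mu_r^u$, yields $v_u(\hat\omega)=\lfloor\mu_1^u\rfloor$, with no cancellation at leading order. Summing all local contributions gives $\deg(\hat{F}^{w})=\sum_{u\in\cu}\lfloor\mu_1^u\rfloor=\sum_{u\in\hat{D}}\lfloor\mu_1^u\rfloor$.

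The main obstacle I expect is precisely this regular singular computation: controlling the interaction of the Deligne normalization with quasi-unipotent monodromy. One must match the exponents $\mu_j^u$ with the residue eigenvalues of $\hat\nabla$ modulo $\Z$, check that the normalization shifts each leading power to its integer part (producing the floor rather than some other rounding), and verify that the $\log z$ terms and Jordan blocks neither lower the leading order nor create cancellations among the distinct Frobenius solutions. Once the bookkeeping of these shifts is pinned down, the global degree formula follows formally from the local orders.
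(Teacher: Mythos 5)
Your proposal is correct and takes essentially the same route as the paper's proof: you expand $\omega$ in a multi-valued flat frame, pass to Deligne's frame so the coefficients become the single-valued functions $z^{N}f$, use Lemma \ref{lem:LS} together with the Frobenius form \eqref{eq:basismultivalued} to see that the $j$-th coefficient has order $\mu_j^u+\nu_j^u=\lfloor \mu_j^u\rfloor$, and read off the local order $\lfloor \mu_1^u\rfloor$ of the extended section, summing to get the degree. The only differences are presentational (you separate regular from singular points and invoke degree-as-total-order of a meromorphic section explicitly), and the Jordan-block/$\log z$ bookkeeping you flag as the main obstacle is treated in the paper at the same level of detail.
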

Even though the exponents $\mu_j^u$ clearly depend on $\omega$, the sum on the right-hand side of (\ref{eq:deg}) is independent of $\omega$. 
In particular, rescaling by a non-zero meromorphic function does not change the result. 
\begin{proof}
As before we order the exponents $\mu_1^u\leq \dots \leq  \mu_r^u$ for each $u\in C$. 
Let $\cu'\subset \cuo$ be the dense subset such that $\omega$ is cyclic. 
In particular, $\omega(u)\neq 0$ and $\mu_1^u= 0$ for all $u\in \cu'$, cf. Remark \ref{rem:exponents}. 
If $u\in \cu-\cu'$,  we choose $\nu^u_j\in (-1,0]$, as in the construction of $\hat{\VH}$,  such that\footnote{Note that \eqref{eq:nuj} does not imply $\nu_i^u\leq \nu_j^u$ for $i\leq j$ in general. However, this is not important at the moment because we do not consider the parabolic structures at $u$ here.}
\begin{equation}\label{eq:nuj}
\exp(2\pi i \nu_j^u)=\lambda_j^u= \exp(-2\pi i \mu_j^u).
\end{equation}
Here $\lambda_j^u$ are the eigenvalues of the monodromy $T(u)$ of $\VH_{\Z}$ around $u$. 
Moreover, we denote by $N(u)$ the logarithm of $T(u)$ determined by $\nu_j^u$. 

From now on, we drop $u$ from the notation. 
Let $(s_1,\dots, s_r)$ be a multi-valued frame of $\VH_{\Z}$ 
Then we express
\begin{equation*}
\omega= (s_1, \dots, s_r) \begin{pmatrix}
f_1 \\ 
\vdots \\ 
f_r
\end{pmatrix}= 
(\hat{s}_1, \dots, \hat{s}_r) \exp(2\pi i N \tau) \begin{pmatrix}
f_1 \\ 
\vdots \\
f_r
\end{pmatrix}
=: 
(\hat{s}_1, \dots, \hat{s}_r)
\begin{pmatrix}
g_1 \\
\vdots 
\\ 
g_r
\end{pmatrix}. 
\end{equation*}
Note that $f_1,\dots, f_r$ form a basis of solutions to (\ref{eq:PF}) by Lemma \ref{lem:LS} and that $g_1, \dots, g_r$ are single-valued holomorphic functions. 
It follow from (\ref{eq:basismultivalued}) that they are of the form
\begin{equation*}
g_j(z)=z^{\mu_j+\nu_j} g_j'(z)
\end{equation*}
with holomorphic $g_j'$ such that $g_j'(0)\neq 0$. 
In order for $g_j$ to be holomorphic, we must have 
\begin{equation}\label{eq:kj}
k_j:=\mu_j+\nu_j\in \Z.
\end{equation}
By the choice of $\nu_j$, this implies $k_j=\lfloor \mu_j \rfloor$. 
Thus we can write 
\begin{equation*}
\begin{pmatrix}
g_1 \\ 
\vdots \\
g_r
\end{pmatrix}= 
z^{\lfloor \mu_1 \rfloor} \begin{pmatrix}
g_1'' \\
\vdots \\
g_r''
\end{pmatrix}
\end{equation*}
with $g_1'(0) \neq 0$ so that $\omega= z^{\lfloor \mu_1 \rfloor} \omega'$ for a nowhere vanishing section $\omega'\in H^0(\Delta, \hat{F}^{w})$. 
Therefore we conclude the claim: 
\begin{equation*}
\deg(\hat{F}^{w})=\sum_{u \in C} \lfloor \mu_1^u \rfloor= \sum_{u\in \hat{D}} \lfloor \mu_1^u \rfloor.
\end{equation*}
\end{proof}

Proposition \ref{prop:deg} enables us to compute the degrees of the bundles $\hat{E}^{p,q}$ constructed from Deligne's canonical extension, cf. (\ref{eq:Ehatpq}).
To give the formula, we decompse $\hat{D}=\hat{D}_a+\hat{D}_s$.
Here the divisor
\begin{equation*}
\hat{D}_a=\{u\in \hat{D}~|~\mu_j^u\in \Z \mbox{ for all }j=1,\dots,r  \}=D_a+D_\omega
\end{equation*}
is the union of the divisor of apparent singularities $D_a\subset D$ of the corresponding Picard--Fuchs equations and the divisor $D_\omega\subset C^\circ$ defined by $\omega$. 
The divisor $\hat{D}_s\subset D$ is the divisor of regular singularities of $\hat{\nabla}$. 

\begin{thm}\label{thm:degEpq}
Consider the line bundle $\hat{E}^k:=\hat{E}^{w-k,k}$, $l=0,\dots, w$, where $w=r-1$ is the weight of the $\Z$-VHS $(\VH_{\Z},F^\bullet)$ with generic cyclic vector $\omega$. 
Then its degree is given by\footnote{We denote by $|D|$ the number of irreducible components of a divisor $D\subset C$.} 
\begin{equation}\label{eq:deghatEpq}
\deg(\hat{E}^{k})=\sum_{u\in \hat{D}} \lfloor \mu_{k+1}^u \rfloor - k \left(|\hat{D}_a|+|D|+ (2g_C-2) \right)
\end{equation}
for the genus $g_C$ of $C$.
\end{thm}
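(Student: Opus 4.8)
The plan is to compute $\deg(\hat E^k)$ by induction on $k$, using the graded pieces of the logarithmic Gau\ss--Manin connection as morphisms of line bundles and feeding in Proposition \ref{prop:deg} as the base case. Write $\hat E^k=\hat E^{w-k,k}=\hat F^{w-k}/\hat F^{w-k+1}$, cf. \eqref{eq:Ehatpq}. Since $\omega$ is a generic cyclic vector, Proposition \ref{lem:cyclicoper} shows that $(\hat F^\bullet,\hat\nabla)$ is an oper on $C'=C^\circ-D_\omega$, so the $\Oo_C$-linear maps $\hat\varphi^k\colon\hat E^k\to\hat E^{k+1}\otimes\Omega^1_C(\log D)$ of \eqref{eq:phihat} are isomorphisms on $C'$, in particular nonzero morphisms of line bundles. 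For such a morphism the cokernel is a torsion sheaf supported on $\hat D$ whose length at $u$ is the vanishing order $\mathrm{ord}_u(\hat\varphi^k)$, and $\deg(\mathrm{coker}\,\hat\varphi^k)=\deg(\hat E^{k+1})+\deg\Omega^1_C(\log D)-\deg(\hat E^k)$. Using $\deg\Omega^1_C(\log D)=2g_C-2+|D|$, this yields the recursion
\[
\deg(\hat E^{k+1})=\deg(\hat E^k)-(2g_C-2+|D|)+\sum_{u\in\hat D}\mathrm{ord}_u(\hat\varphi^k).
\]

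The theorem then reduces to a local computation of these orders. I would introduce the meromorphic section $\sigma_k:=[\nabla^k\omega]\in\hat E^k$, the projection to the graded piece of the $k$-th covariant derivative of $\omega$ contracted against the local generator $\de z/z$ (resp. $\de z$ away from $D$) of $\Omega^1_C(\log D)$; it is a frame on $C'$ and satisfies $\hat\varphi^k(\sigma_k)=\sigma_{k+1}\otimes(\de z/z)$, so that $\mathrm{ord}_u(\hat\varphi^k)=\mathrm{ord}_u(\sigma_{k+1})-\mathrm{ord}_u(\sigma_k)$. Proposition \ref{prop:deg} already gives $\mathrm{ord}_u(\sigma_0)=\lfloor\mu_1^u\rfloor$ and $\deg(\hat E^0)=\sum_{u\in\hat D}\lfloor\mu_1^u\rfloor$. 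The key claim is the local formula
\[
\mathrm{ord}_u(\sigma_k)=\lfloor\mu_{k+1}^u\rfloor-k\,\epsilon_u,\qquad \epsilon_u=1 \text{ if } u\in\hat D_a,\ \epsilon_u=0 \text{ otherwise},
\]
where $\hat D_a$ is the locus of trivial local monodromy (apparent singularities together with $D_\omega$). Granting this, the recursion telescopes: summing $\mathrm{ord}_u(\hat\varphi^j)=\mathrm{ord}_u(\sigma_{j+1})-\mathrm{ord}_u(\sigma_j)$ over $j=0,\dots,k-1$ and $u\in\hat D$ collapses to $\sum_{u}\bigl(\lfloor\mu_{k+1}^u\rfloor-\lfloor\mu_1^u\rfloor\bigr)-k|\hat D_a|$, and adding $\deg(\hat E^0)-k(2g_C-2+|D|)$ gives exactly \eqref{eq:deghatEpq}.

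The remaining content, and the main obstacle, is the local order formula for $\sigma_k$, which I would prove by the Frobenius method in Deligne's canonical frame of \S\ref{ss:deligne}. Fix $u$ with coordinate $z$ and canonical frame $\hat s_i$; as in \eqref{eq:shat} the flat frame is $s=\exp(2\pi\I N\tau)\hat s$, and a short computation shows that $\nabla_\theta$ acts on coordinate vectors in the $\hat s$-frame as $\theta-N$, so $\nabla^k_\theta\omega=(\theta-N)^k g$ where $\omega=\sum_i g_i\hat s_i$ and $g_i=z^{\lfloor\mu_i^u\rfloor}g_i'$ with holomorphic $g_i'$, as in the proof of Proposition \ref{prop:deg}. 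The order of the class of $(\theta-N)^k g$ in the quotient $\hat E^k$ is then read off from the leading exponents in \eqref{eq:basismultivalued}: when the semisimple part $N_s$ is nonzero the distinct fractional powers $z^{\nu_j}$ separate the floored exponents and one obtains $\lfloor\mu_{k+1}^u\rfloor$; when the monodromy is trivial ($N=0$, i.e. $u\in\hat D_a$) one instead writes $\omega=z^{a}\omega'$ with $a=\lfloor\mu_1^u\rfloor$ and uses the Leibniz identity $\nabla^k_\theta(z^a\omega')=z^a(\nabla_\theta+a)^k\omega'\equiv z^a\nabla^k_\theta\omega'\pmod{\hat F^{w-k+1}}$ to absorb an extra factor, producing the correction $-k\epsilon_u$. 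The delicate points I expect to cost the most are the bookkeeping of the nilpotent (logarithmic) part $N_u$ of the monodromy, and the verification that in the graded quotient the surviving leading term of $(\theta-N)^k g$ is genuinely governed by the $(k+1)$-st floored exponent rather than being absorbed into $\hat F^{w-k+1}$; this is precisely where the cases $u\in\hat D_s$ and $u\in\hat D_a$ must be treated separately.
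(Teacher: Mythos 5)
Your global skeleton is the same as the paper's proof: the degree recursion $\deg(\hat{E}^{k+1})=\deg(\hat{E}^{k})+\deg(\mathrm{Div}(\hat{\varphi}^{k}))-\left(|D|+2g_C-2\right)$, coming from the fact that $\hat{\varphi}^k$ is a nonzero map of line bundles, the base case supplied by Proposition \ref{prop:deg}, and the telescoping induction. Your bookkeeping through the sections $\sigma_k=[\nabla^k\omega]$ with $\mathrm{ord}_u(\sigma_k)=\lfloor\mu_{k+1}^u\rfloor-k\epsilon_u$ is an equivalent repackaging of the paper's local input \eqref{eq:vhatphi}: taking differences gives $\mathrm{ord}_u(\hat{\varphi}^k)=\lfloor\mu_{k+2}^u\rfloor-\lfloor\mu_{k+1}^u\rfloor-\epsilon_u$, which is exactly the paper's two-case formula. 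The one real difference is that the paper does not prove this local formula at all --- it imports it from \cite[Lemma 6.3]{EskinEtAl} and \cite[Theorem 2.7]{DoranEtAlVHS} --- whereas you attempt to derive it yourself via the Frobenius method in Deligne's canonical frame.

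That derivation is where a genuine gap sits. Your case analysis is ``$N_s\neq 0$, so the distinct fractional powers $z^{\nu_j}$ separate the floored exponents'' versus ``$N=0$ (trivial monodromy), producing the $-k\epsilon_u$ correction''. These cases are not exhaustive, and the missing case is the most important one in this paper: points with $N_s=0$ but $N_u\neq 0$, i.e. nontrivial unipotent monodromy, where all exponents are integers and solutions contain logarithms. This covers the maximally unipotent points (exponents $(0,\dots,0)$) and the conifold points (exponents $(0,1,1,2)$ for the quintic) appearing in every example of \S\ref{examples}. At such points there are no fractional powers available to separate coinciding exponents (e.g. $\mu_2^u=\mu_3^u=1$ at a conifold); the separation must come from the nilpotent part $N_u$, i.e. from the $(\log z)$-terms in \eqref{eq:basismultivalued}, and one must verify that the class of $(\theta-N)^k g$ in $\hat{E}^k=\hat{F}^{w-k}/\hat{F}^{w-k+1}$ has order exactly $\lfloor\mu_{k+1}^u\rfloor$ with no correction. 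You flag exactly this as ``delicate'' but never carry it out, and the same issue already occurs inside your first case whenever two exponents with equal fractional part coincide (e.g. $(\tfrac{1}{2},\tfrac{1}{2})$ at $\infty$ for the Legendre family). One positive remark: your characterization of $\hat{D}_a$ by \emph{trivial local monodromy} is the correct one --- the paper's stated definition ``all $\mu_j^u\in\Z$'' would wrongly place unipotent points in $\hat{D}_a$, contradicting its own quintic and Legendre examples --- but adopting it forces you to genuinely handle the unipotent case on the $D_s$ side, which is precisely the gap above. If instead you simply cite \cite[Lemma 6.3]{EskinEtAl} for \eqref{eq:vhatphi}, your argument closes and coincides with the paper's proof.
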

\begin{rem}
Clearly, the formula \eqref{eq:deghatEpq} coincides with \eqref{eq:deg} for $k=0$. 
Moreover, observe that $\deg(\hat{E}^k)$ for $\hat{E}^k=\hat{E}^{w-k,k}$ is completely determined by $\omega$ and the pair $(C,D)$. 
\end{rem}

\begin{proof}
The components 
\[
\hat{\varphi}^k:\hat{E}^{k}\to \hat{E}^{k+1}\otimes \Omega^1_C(\log D),
\]
$k=0, \dots, w-1$, of the Higgs field $\hat{\varphi}$ give the formula
\begin{equation}\label{eq:hatEnk}
\begin{aligned}
\deg(\hat{E}^{k+1})&=\deg\left(\mathrm{Div}(\hat{\varphi}^{k})\right)+\deg(\hat{E}^{k})-\deg(\Omega^1_C(\log D))
\\
&= \deg\left(\mathrm{Div}(\hat{\varphi}^{k})\right)+\deg(\hat{E}^{k})- \left(|D|+(2g_C-2)\right)
\end{aligned}
\end{equation}
for the divisor $\mathrm{Div}(\hat{\varphi}^{k})$ defined by $\hat{\varphi}^{k}$.
The order $v_u(\hat{\varphi}^k)$ of $\hat{\varphi}^k$ at $u$ is given by 
\begin{equation}\label{eq:vhatphi}
v_u(\hat{\varphi}^k)=\begin{cases}
\lfloor \mu_{k+2}^u \rfloor - \lfloor \mu_{k+1}^u \rfloor, & u\in D_s, \\
\lfloor \mu_{k+2}^u \rfloor - \lfloor \mu_{k+1}^u \rfloor - 1, & u\in \hat{D}_a, 
\end{cases}
\end{equation}
see \cite[Lemma 6.3]{EskinEtAl} and \cite[Theorem 2.7]{DoranEtAlVHS}. 
Note that $\hat{\varphi}^k$ is an isomorphism for the regular points $u\in D_\omega\subset \hat{D}_a$ because $\mu_{k+1}^u=\mu_{k}^u+1$ in these cases, see Remark \ref{rem:exponents}.
Summing over $u\in C$ gives
\begin{equation}\label{eq:divphi}
\deg(\mathrm{Div}(\hat{\varphi}^{k}))=\left(\sum_{u\in \hat{D}}\lfloor \mu_{k+2}^u  \rfloor -\lfloor \mu_{k+1}^u \rfloor \right) - |\hat{D}_a|. 
\end{equation}

Now we prove (\ref{eq:deghatEpq}) inductively: 
For $k=0$, combining (\ref{eq:divphi}) with Proposition \ref{prop:deg} and (\ref{eq:hatEnk}) yields 
\begin{equation*}
\deg(\hat{E}^{1})= \sum_{u\in D} \lfloor \mu_2^u \rfloor - \left(|\hat{D}_a | + |D|+(2g_C-2)\right). 
\end{equation*}

The induction step $k-1 \to k$ gives:
\begin{align*}
\deg(\hat{E}^{k})& = \deg(\mathrm{Div}(\hat{\varphi}^{k})+\deg(\hat{E}^{k-1})-\deg(\Omega^1_C(\log D)) \\
& =
\sum_{u\in \hat{D}} \lfloor \mu_{k+1}^u \rfloor  - k\left(|\hat{D}_a| + |D|+(2g_C-2)\right).
\end{align*}
\end{proof}
\begin{cor}\label{cor:conjpdeg}
Assume $\mu_r^u-\mu_1^u\in [0,1)$ for all $u\in D_s$. 
Then the parabolic degrees of $\mathrm{pdeg}(\hat{E}^{k})$, $\hat{E}^k=\hat{E}^{w-k,k}$, with respect to the induced parabolic structure, are
\begin{equation*}
\mathrm{pdeg}(\hat{E}^{k})= \deg(\hat{E}^{k})-\sum_{u\in D_s}
\nu_{k+1}^u.
\end{equation*}
for $k=0,\dots,w$. 
\end{cor}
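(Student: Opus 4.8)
The plan is to obtain $\mathrm{pdeg}(\hat E^k)$ from the ordinary degree $\deg(\hat E^k)$ of Theorem~\ref{thm:degEpq} by adding the contribution of the parabolic weights, and then to identify those weights with $-\nu^u_{k+1}$. Recall that for a parabolic line bundle on the curve $\cu$ with parabolic weight $\alpha_u\in[0,1)$ at each $u\in D$ one has
\begin{equation*}
\mathrm{pdeg}(\hat E^k)=\deg(\hat E^k)+\sum_{u\in D}\alpha_u(\hat E^k),
\end{equation*}
so it suffices to show that the parabolic weight of the line bundle $\hat E^{k}=\hat E^{w-k,k}$ equals $-\nu^u_{k+1}$ for $u\in D_s$ and $0$ otherwise. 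The vanishing off $D_s$ is immediate: at a point of $\hat D_a$ all exponents satisfy $\mu^u_j\in\Z$, so the semisimple part $T_s$ of the monodromy is the identity, $N_s=0$ and $\nu^u_j=0$, whence the filtration (\ref{eq:hnu}) is trivial; the same holds at every regular point of $\cu^\circ\setminus D_\omega$, where the monodromy is trivial. Thus only $u\in D_s$ contributes, and the formula reduces to the weight computation at those points.

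For the main step I would identify the parabolic weight of $\hat E^{w-k,k}$ at $u\in D_s$. By the nilpotent orbit theorem the semisimple part $T_s$ of the monodromy is an automorphism of the limiting Hodge structure, so $N_s$ preserves the extended Hodge filtration $\hat F^\bullet_0$ and descends to the one-dimensional graded piece $\hat E^{w-k,k}_0$ as multiplication by a single eigenvalue $\nu\in(-1,0]$; by the definition (\ref{eq:hnu}) of the filtration $\hat E_0(\nu)$, the number $-\nu$ is precisely the parabolic weight. To show $\nu=\nu^u_{k+1}$ I would run the computation from the proof of Proposition~\ref{prop:deg}: writing $\omega=\sum_j g_j\hat s_j$ in the Deligne frame with $g_j(z)=z^{\mu^u_j+\nu^u_j}g_j'(z)$ and $g_j'(0)\neq 0$, the class $\hat\nabla^k\omega \bmod \hat F^{w-k+1}$ generating $\hat E^{w-k,k}_0$ has leading monodromy eigenvalue $\exp(2\pi i\nu^u_{k+1})=\exp(-2\pi i\mu^u_{k+1})$. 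Here the hypothesis $\mu^u_r-\mu^u_1<1$ enters decisively: it forces distinct exponents to have distinct fractional parts, hence distinct eigenvalues $\lambda^u_j$, so the $N_s$-eigenspace decomposition refines the Hodge grading and the piece indexed by $k$ is matched unambiguously with the exponent $\mu^u_{k+1}$ rather than with an integer translate that would scramble the order. Consequently $-\nu^u_{k+1}=\{\mu^u_{k+1}\}$ is the sought weight.

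The main obstacle is exactly this last identification of the $N_s$-eigenvalue on $\hat E^{w-k,k}$ with $\nu^u_{k+1}$, i.e. the compatibility of the cyclic-vector (Hodge) filtration with the monodromy eigenspace decomposition: one must verify that the $(k{+}1)$-st step of $\hat F^\bullet_0$ lands in the eigenspace for $\lambda^u_{k+1}$. This is where the assumption $\mu^u_r-\mu^u_1<1$ is essential and where the bookkeeping requires the most care, since without it several exponents may share a fractional part, the eigenspaces become higher dimensional, and the order-preserving matching between filtration steps and weights breaks down.
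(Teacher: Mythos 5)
Your framework steps are fine and agree with the paper: $\mathrm{pdeg}=\deg+\sum_u(\text{weights})$, and the parabolic structure is trivial away from $D_s$. The gap is in your central mechanism, which rests on a false premise. It is \emph{not} true, even under the hypothesis $\mu_r^u-\mu_1^u\in[0,1)$, that $T_s$ (equivalently $N_s$) preserves the fiber $\hat{F}^\bullet_0$ of the extended Hodge filtration at $u\in D_s$; consequently $N_s$ does not act on the graded pieces $\hat{E}^{w-k,k}_0$, there is no ``single eigenvalue'' on them, and the ``leading monodromy eigenvalue'' of $\hat{\nabla}^k\omega \bmod \hat{F}^{w-k+1}$ is not well defined. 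A counterexample inside the paper's setting: an elliptic surface with a fiber of Kodaira type III, so $r=2$, $w=1$, $T=T_s$ of order $4$, exponents $\mu_1^u=\tfrac14$, $\mu_2^u=\tfrac34$ (the hypothesis holds). That the exponents are exactly $\tfrac14,\tfrac34$ means that in an eigenbasis $s_1,s_2$ of $T$ one has $\omega=f_1s_1+f_2s_2$ with $f_j=c_jz^{\mu_j}(1+O(z))$, $c_j\neq 0$, hence in the Deligne frame $\omega=g_1\hat{s}_1+g_2\hat{s}_2$ with $g_1(0),g_2(0)\neq 0$, and $\hat{F}^1_0=\C\,(g_1(0)\hat{s}_1(0)+g_2(0)\hat{s}_2(0))$ is the diagonal line, not an eigenline of $T_s$ (whose eigenvalues $\mp i$ are distinct). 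The fact you appeal to --- that $T_s$ acts on the limiting (mixed) Hodge structure --- concerns the limit taken \emph{after} the base change making the monodromy unipotent (Steenbrink's nearby-cycle construction); that space differs from the fiber of Deligne's canonical extension downstairs by precisely the fractional twists $z^{\nu_j}$, and it is the downstairs fiber whose graded pieces are the $\hat{E}^{p,q}_0$. For the same reason your claim that distinct eigenvalues force the eigenspace decomposition to ``refine the Hodge grading'' fails: the eigenlines are $\C\hat{s}_1(0)$ and $\C\hat{s}_2(0)$, while the Hodge line is their diagonal. Note that the corollary's conclusion is still true in this example (weights $\tfrac14$ and $\tfrac34$), so what breaks is your mechanism, not the statement.

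What actually determines the weight of $\hat{E}^k_u$ is the deepest step of the filtration induced by the generalized eigenspace decomposition whose intersection with $\hat{F}^{w-k}_0$ still surjects onto $\hat{F}^{w-k}_0/\hat{F}^{w-k+1}_0$, and identifying that step is the real work. The paper does this by an argument absent from your proposal: $\hat{\varphi}$ preserves the parabolic filtration (because $[T,N]=0$); the hypothesis together with the vanishing-order formula \eqref{eq:vhatphi} makes each component $\hat{\varphi}^k_u\colon\hat{E}^k_u\to\hat{E}^{k+1}_u$ a fiberwise isomorphism; and a rank count then forces each line $\hat{E}^k_u$ to lie in the step of weight $-\nu^u_{k+1}$ and no deeper --- if some $\hat{E}^k_u$ sat in a deeper step, transporting it by the isomorphisms $\hat{\varphi}^l_u$ would place all subsequent lines in that step and exceed its rank. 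If you wish to salvage your explicit-frame computation from Proposition \ref{prop:deg}, you would need to track which eigencomponents of all the vectors $\hat{\nabla}^l_{z\,d/dz}\omega(0)$ survive modulo $\hat{F}^{w-k+1}_0$; doing that consistently amounts to redoing the paper's counting argument, not to reading off a single eigenvalue.
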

Since $\mu_{k+1}^u+\nu_{k+1}^u=\lfloor \mu_{k+1}^u\rfloor$ for $u\in D_s$ the parabolic degrees are given by (\ref{eq:deghatEpq}) with $\lfloor \mu_{k+1}^u \rfloor$ replaced by $\mu_{k+1}^u$. 
Note that $\lfloor \mu_{k+1}^u \rfloor=\mu_{k+1}^u$ for $u\in \hat{D}_a=\hat{D}-D_s$ anyway. 
\begin{rem}\label{rem:comp}
A formula for $\mathrm{pdeg}(\hat{E}^{w-k,k})$ first appeared in Theorem 6.1 of \cite{EskinEtAl} in the special case of 14 VHS of Calabi-Yau type over $C=\mathbb{CP}^1$, see Example \ref{ex:14VHS} for more details. 
Moreover, \cite{DoranEtAlVHS} give degree formulas in the example of families of elliptic curves ($n=2$) and K3 surfaces ($n=3$) over $C=\mathbb{CP}^1$. 
\end{rem}
\begin{proof}
The statement is equivalent to the claim that the induced parabolic weights of $\hat{E}^k$ at $u$ are $-\nu_{k+1}^u$ for every $u\in D_s$. 
Elsewhere the parabolic structure of $\hat{E}$ and hence of $\hat{E}^k$ is trivial.

Fix $u\in D_s$.  
We denote by $i_j$, $j=1,\dots, s$, the indices such that 
$0\leq -\nu_{i_1} < \dots < -\nu_{i_s} < 1$ and each $\nu_{i_j}$ has multiplicity $m_j$.
Observe that this implies  
\begin{equation}\label{eq:dimEk}
\mathrm{rk}(\hat{E}(-\nu_{i_j}))=\sum_{l=1}^j m_l\quad \mbox{ and } \sum_{l=1}^s=r.
\end{equation}
We claim that 
\begin{equation}\label{eq:parabolicdegs}
\hat{E}^k\subset \hat{E}(-\nu_{i_j}),\quad \hat{E}^k\cap \hat{E}(-\nu_{i_{j+1}})=0
\end{equation}
if $\nu_{k+1}=\nu_{i_j}$.  
We first show that 
\begin{equation}\label{eq:claimEk}
\hat{E}^k\subset \hat{E}(-\nu_{i_1}), \quad \hat{E}^k\cap \hat{E}(-\nu_{i_2})=0
\end{equation}
for $k=0,\dots, m_1-1$. 
Assume the contrary, i.e. $\hat{E}^k\subset \hat{E}(-\nu_{i_j})$ for some $k< m_1-1$ and $j\geq 2$. 
By assumption on the exponents at $u$ and \eqref{eq:vhatphi}, $\hat{\varphi}^k_u$ maps $\hat{E}_u^k$ isomorphically to $\hat{E}^{k+1}_u$. 
Since $\hat{\varphi}$ preserves the parabolic filtration, $\hat{E}_u^{k+1}\subset \hat{E}(-\nu_{i_j})$. 
Inductively, we see that  $\hat{E}_u^l\subset \hat{E}(-\nu_{i_j})$ for all $l \geq k$. 
But then  
\begin{equation*}
\mathrm{rk}(\hat{E}(-\nu_{i_j}))> \sum_{l=1}^j m_j
\end{equation*}
contradicting \eqref{eq:dimEk} so that \eqref{eq:claimEk}) follows. 
Repeating this argument successively we arrive at \eqref{eq:parabolicdegs}.
Finally, \eqref{eq:parabolicdegs} implies that $\hat{E}^k$ has parabolic weight $-\nu_{i_j}=-\nu_{k+1}$ as claimed. 
\end{proof}

\section{Examples and oper gauge transformation}\label{examples}

We will discuss examples of Calabi--Yau manifolds over complex one-dimensional moduli spaces $\mathcal{B}$ which arise in mirror symmetry. 
These are obtained as mirror families of Calabi--Yau hypersurfaces $Y$ in toric varieties with $\dim_{\C} H^{1,1}(Y)=1$.
The data of the mirror families is given by dual polyhedra $\check{\Delta},\Delta$ using Batyrev's construction \cite{Batyrev}, equivalently using Hori--Vafa's construction \cite{Hori:2000kt}. 
We briefly review the necessary ingredients of these constructions and refer to Refs.~\cite{CoxKatz,Clader:2014kfa} and references therein for further background.

We will start with an analogous discussion to \S\ref{explicit} of the rank 2 Higgs bundles obtained from VHS of general elliptic curves and then specialize to the Legendre and cubic families. 
In these cases, Theorem \ref{thm:degEpq} together with the classification of line bundles on $\mathbb{CP}^1$ allow us to completely determine the parabolic Higgs bundles which we illustrate in concrete examples. 
Moreover, we will see in Example \ref{ex:14VHS} that our formula reproduces the parabolic degrees computed in \cite{EskinEtAl}. 

\subsection{Mirror construction and Picard--Fuchs equations}\label{ss:ms}
To describe the mirror pair of d-dimensional CY hypersurfaces we use Batyrev's construction \cite{Batyrev}. We consider $\check{\Delta}$ a reflexive polyhedron in $\mathbbm{R}^{d+2}$ defined as a convex hull of $d+3$ integral vertices $\nu_i\in \mathbbm{Z}^{d+2}\subset \mathbbm{R}^{d+2}\,, i=0,\dots,d+2$ lying in a hyperplane of distance one from the origin. $\check{W}=P_{\Sigma(\check{\Delta})}$ is the toric variety with fan $\Sigma(\check{\Delta})$ defined by the set of cones over the faces of $\check{\Delta}$. $\Delta$ is the dual polyhedron and $W$ is the toric variety obtained from $\Sigma(\Delta)$. The mirror pair of CY d-folds given as hypersurfaces in $(\check{W},W)$ is denoted by $(\check{\CY},\CY)$.

The hypersurface $\CY$ is determined as the vanishing locus of the equation:
\begin{equation}
P(\CY)=\sum_{i=0}^{d+2}a_i y_i =\sum_{\nu_i\in \Sigma} a_i X^{\nu_i}\,,
\end{equation}
where $a_i$ are complex parameters and $y_i$ certain homogeneous coordinates on $W$ \cite{Hori:2000kt}. $X_k\,,k=1,\dots,d+1$ are inhomogeneous coordinates on an open torus $(\mathbbm{C}^*)^{d+1} \subset W$ and $X^{\nu_i}:=\prod_k X_k^{(\nu_i)_k}$ \cite{Batyrev}, where $(\nu_k)$ denotes the $k-$th entry of $\nu_i$.

The integral points $\nu_i$ and the homogeneous coordinates $y_i$ fulfill one relation specified in terms of a $d+3$ dimensional vector $l$:
\begin{equation}
\sum_{i=0}^{d+2} l_i \nu_i =0 \,, \quad \prod_{i=0}^{d+2} y_i^{l_i}=1\,.
\end{equation}
The integral vector $l$ specifies the charge of the matter fields of the gauged linear sigma model associated with $\CY$ \cite{Witten:1993yc}. 

The period integrals of the holomorphic $d-$form on $\CY$ are given by:
\begin{equation}
\pi (a_i) =\frac{1}{(2\pi i)^{d+1}} \int_{|X_k|=1} \frac{1}{P(\CY)} \prod_{k=1}^{d+1} \frac{dX_k}{X_k}\,.
\end{equation}

These periods are annihilated by a system of differential equations of GKZ hypergeometric type\footnote{See \cite{GKZ} for background and definitions.}:
\begin{eqnarray}
L = \prod_{l_i>0} \left( \frac{\partial}{\partial a_i}\right)^{l_i} -\prod_{l_i < 0 } \left( \frac{\partial}{\partial a_i}\right)^{-l_i}\,, \\
\mathcal{Z}_k= \sum_{i=0}^{d+2} \nu_{i,k} \theta_i\,, k=1,\dots,d+1\,,\quad \mathcal{Z}_0=\sum_{i=0}^{d+1} \theta_i +1\,,
\end{eqnarray}
where $\theta_i:=a_i \frac{\partial}{\partial a_i}$. The differential equation $L \pi(a_i)=0$ is satisfied by definition. The equations $\mathcal{Z}_k \pi(a_i)=0$ express the invariance of the period integral under the torus action and imply that the period integrals depend only on special combinations of the parameters $a_i$. These are given by:
\begin{equation}
z:= (-1)^{l_0} \prod_i a_i^{l_i}\,,
\end{equation}
and define local coordinates on the moduli space of complex structures $\mathcal{B}$ of $\CY$.

\subsection{Rank two}\label{ss:rank2}
For the rank two case we consider families of elliptic curves
$$\pi: \mathcal{X}\rightarrow \mathcal{B}$$
with fibers $\pi^{-1}(u)=X_u\,,u\in \mathcal{B}$. Following the notation of \S\ref{explicit},  we have a line bundle $\mathcal{L}\rightarrow \mathcal{B}$ whose fibers are the cohomology groups $H^{1,0}(X_u)$. 
In particular, the associated VHS $\mathcal{H}$ of weight $1$ decomposes smoothly into $\mathcal{H}=\mathcal{L}\oplus \bar{\mathcal{L}}$.
We choose a local trivialization of $\mathcal{L}$ given by a choice of holomorphic one-form $\omega_0$. 
By working on a local coordinate chart of $\mathcal{B}$ with coordinate $u$, we arrive at the second order equation
\begin{equation}\label{ranktwoPF}
    \hbar^2 \nabla_u^{2} \omega_0 = -b_1 \hbar\nabla_u \,\omega_0 -b_0\, \omega_0\,, \quad \nabla_u:=\nabla_{\partial/\partial u}
\end{equation}
where we have considered the one--parameter deformation by $\hbar$ as in the introductory example. 
In particular, it yields the Picard--Fuchs equation for the two periods of $\omega_0$.
The Griffiths--Yukawa coupling $c\in \Gamma(\mathcal{L}^{-2}\otimes T^*\mathcal{B})$ is defined by
\begin{equation}
    c(\omega,\omega'):=\eta(\omega, \nabla \omega'), \quad \omega,\omega'\in \mathcal{L}, 
\end{equation}
cf. Definition \ref{def:gy}.
Its coordinate expression with respect to $u$ and $\omega_0$ is 
$$ 
c_u := \eta(\omega_0,\nabla_u \omega_0)= i \int_{X_u} \omega_0 \wedge \nabla_u \omega_0 \,.$$
From \eqref{ranktwoPF} we derive the following equation satisfied by $c_u$:\footnote{See also \cite{Alim:2014vea}.}
\begin{equation}
 \partial_u c_u = -\frac{b_1}{\hbar} \,c_u\,.
\end{equation}
We proceed to construct a basis for the fibers of the Hodge bundle that respects the Hodge decomposition.  We construct $\omega_1 \in H^{0,1}(X_u) \otimes T^* \mathcal{B}$ as: 
\begin{equation}
    \omega_1 = (\nabla_u -D_u) \omega_0\, \de u \,, 
\end{equation}
where $D_u$ denotes the Chern connection. 
Moreover, we define
\begin{equation}
    e^{-K}:=h(\omega_0,\omega_0)=h_{0\bar{0}}=i\int_{X_u} \omega_0\wedge\, \overline{\omega_0}\,.
\end{equation}
Then the Hodge metric $h=h_{a\bar{b}} \de t^a \de \bar{t}^{\bar{b}}\,, a,b,=0,1$ in the local holomorphic frame $\omega_{\mathrm{NAH}}=(\omega_0, \omega_1)$ is given by 
\begin{equation}
h_{a\bar{b}}= \left(\begin{array}{cc} 
e^{-K} &0  \\
0& e^{-K}G_{u\overline{u}}  \de u \de \ubar  
\end{array} \right)
\end{equation}
with $G_{u\bar{u}}:= \partial_u \partial_{\bar{u}} K$ is a K\"ahler metric on $\mathbb{P}^1-\{0, 1, \infty\}$, we have moreover the relation
$$ e^{-K} \, G_{u\bar{u}} = |c_{u}|^2\, e^{K} \,.$$

As in Proposition \ref{diffring}, we next derive the following differential ring relations:
let $\Gamma_{uu}^u:=G^{u\bar{u}} \partial_u G_{u\bar{u}}$ be the Levi--Civita connection of the previous K\"{a}hler metric and let $K_u := \partial_u K$. Then we have 
\begin{gather}\label{ranktwodiffring}
\Gamma_{uu}^u - 2 K_u = -\frac{b_1}{\hbar}\, , \\
\hbar^2 \partial_u K_u =\hbar^2 K_u^2 -\hbar b_1 K_u + b_0\,, \label{ranktwodiffring2}
\end{gather}
Here the $b_j$ are the coefficient functions of the Picard--Fuchs equation \eqref{ranktwoPF}.
The coordinate expressions of the Higgs field $\varphi$ and its adjoint $\varphi^{\dagger}$, as well as the flat non-abelian Hodge/$tt^*$ connections $\nabla_{\mathrm{NAH}}^\hbar = \lim_{R \to 0} \nabla_{\mathrm{NAH}}^{R \hbar, R}$ are obtained in complete analogy to Proposition \ref{NAHconnection}.
The limit $\nabla_{\mathrm{NAH}}^{\hbar} = \lim_{R \to 0} \nabla_{\mathrm{NAH}}^{\hbar R, R}$ in the basis $\omega_{\mathrm{NAH}}^{\hbar}=(\omega_0, \hbar \omega_1)$ is given by

\begin{equation}
    \nabla_{\mathrm{NAH}}^{\hbar} = \de+ \frac{1}{\hbar} \begin{pmatrix} 0 & 0 \\ 1 & 0 \end{pmatrix} +  \begin{pmatrix} - K_u & 0 \\ 0 & \Gamma_{uu}^u - K_u  \end{pmatrix} \de u + \hbar \begin{pmatrix} 0  & G_{u \ubar} \de u \de \ubar \\ 0 & 0 \end{pmatrix}.
\end{equation}

The gauge transformation $\mathcal{A}^{\hbar}$ between the frame $\omega^{\hbar}_{\mathrm{NAH}}=(\omega_0,\hbar\, \omega_1)$ and $\omega^{\hbar}_{\mathrm{GM}}=(\omega_0,\hbar\, \nabla_u \omega_0 \,\de u)$
$$ \omega^{\hbar}_{\mathrm{GM}}=  \omega_{\mathrm{NAH}}^{\hbar}\, \mathcal{A}^{\hbar},$$

is completely determined by $K_u$: 
\begin{equation}
(\mathcal{A}^{\hbar})^{-1}= \left( \begin{array}{cc}  1& \hbar \, K_u \, \de u \\
0&1
 \end{array}\right)\,,
\end{equation}
as can be easily seen by writing out:
$$\hbar \omega_1 = \hbar\, \nabla_u \omega_0\, \de u + \hbar\, K_u \,\omega_0\, \de u$$
In particular, we arrive again at 
\begin{eqnarray*}
((\mathcal{A}^{\hbar})^{-1} \circ \nabla^{\hbar}_{\mathrm{NAH}} \circ \mathcal{A}^{\hbar})^{0,1} &=&  (\mathcal{A}^{\hbar})^{-1}\, \partial_{\bar{u}} \mathcal{A}^{\hbar} \de \bar{u} + (\mathcal{A}^{h})^{-1} \varphi^{\dagger} \mathcal{A}^{\hbar} 
\\ &=&\left( \begin{array}{cc}  0& \hbar (-G_{u\bar{u}}+G_{u\bar{u}})\de u  \\
0&0
 \end{array}\right) \de \bar{u}\,,
\end{eqnarray*}
and
\begin{align*}
&((\mathcal{A}^{\hbar})^{-1} \circ \nabla^{\hbar}_{\mathrm{NAH}} \circ \mathcal{A}^{\hbar}))^{-1})^{1,0} \\&=  (\mathcal{A}^{\hbar})^{-1}\, \partial_{u} \mathcal{A}^{\hbar} \de u + (\mathcal{A}^{h})^{-1} \varphi \mathcal{A}^{\hbar}  +  (\mathcal{A}^{\hbar})^{-1} \, \begin{pmatrix} - K_u & 0 \\ 0 & \Gamma_{uu}^u - K_u  \end{pmatrix} \mathcal{A}^{\hbar} \de u 
\\ &=\left(
\begin{array}{cc}
 0 & \hbar \left(\Gamma_{uu}^u\, K_u
 -\partial_u K_u-K_u^2\right)
   \\
 \frac{1}{\hbar} & \Gamma_{uu}^u-2 K_u \\
\end{array}
\right)\de u \overset{\eqref{ranktwodiffring},\eqref{ranktwodiffring2}}{=}  \frac{1}{\hbar} \left( \begin{array}{cc} 0&-b_0 \\
1&-b_1
\end{array}\right) \de u\,. 
\end{align*}

And hence
\begin{equation}
\nabla^{\hbar}_{\mathrm{GM}} = (\mathcal{A}^{\hbar})^{-1} \circ \nabla^{\hbar}_{\mathrm{NAH}} \circ \mathcal{A}^{\hbar}\,,
\end{equation}
for the family of opers
\begin{equation}
\nabla^{\hbar}_{\mathrm{GM}} = \de +\frac{1}{\hbar} \left( \begin{array}{cc} 0&-b_0 \\
1&-b_1
\end{array}\right) \de u \,.
\end{equation}

\begin{ex}[Legendre family]
We continue with the case of families of elliptic curves in Example \ref{ex:ell1}, but now in the parabolic case.
Concretely, let $\pi:X\to \cu=\mathbb{P}^1$ be the elliptic surface determined by the Legendre family 
\begin{equation*}
y^2=x(x-1)(x-u),\quad u\in \C.
\end{equation*}
The fibers are non-singular over $\cuo=\mathbb{P}^1-\{0,1,\infty\}$ and $\omega_0=dx/y$ is a cyclic vector on all of $\cuo$ so that $|\cu_{a,pd}|=0$. We denote the periods of $\omega_0$ over a basis of integral cycles $A,B\in H_1(X_u,\mathbb{Z})$ by $\pi^0,\pi^1$. These are annihilated by the following Picard--Fuchs operator:\footnote{See for example \cite{CarlsonEtAl2} for the derivation of this operator and for the discussion of the modular properties.}
\begin{equation}\label{LegendrePF}
L= \theta_u^2- u \left( \theta_u +\frac{1}{2}\right)^2 \,,
\end{equation}
whose exponents are 
\begin{center}
\begin{tabular}{l | l  l  l}
d & 0 & 1 & $\infty$ \\ \hline
$\mu_1^d$ & 0 & 0 & $\frac{1}{2}$ \\
$\mu_2^d$ & 0 & 0 & $\frac{1}{2}$
\end{tabular}
\end{center}
Hence the formula for the degrees of $\widehat{E}^{\bullet, \bullet}$ in (\ref{eq:deghatEpq}) implies
\begin{equation*}
\hat{E}=\hat{E}^{1,0}\oplus \hat{E}^{0,1}\cong \Oo_{\mathbb{P}^1}\oplus \Oo_{\mathbb{P}^1}(-1). 
\end{equation*}
The parabolic structure is trivial at $d=0,1$ whereas at $d=\infty$ it is given by 
\begin{center}
\begin{tabular}{c c c c c c c}
$ \hat{E}_\infty$ & $\supsetneq $ & $\hat{E}_\infty(\tfrac{1}{2})$ & $=$ & $\hat{E}_\infty(\tfrac{1}{2})$ & $\supsetneq$ & $0$ 
\end{tabular}
\end{center}
The former two cases follow because the local monodromy around $d\in \{0,1\}$ is maximally unipotent. 
For $d=\infty$, the local monodromy has the eigenvalue $i$ with multiplicity $2$. 
In particular, we directly compute  
\begin{gather*}
\mathrm{pdeg}(\hat{E})= -1+ 2\cdot \frac{1}{2}=0, \\
\mathrm{pdeg}(\hat{E}^{1,0})=\frac{1}{2}=-\mathrm{pdeg}(\hat{E}^{0,1}). 
\end{gather*}
\end{ex}

The solutions of $L \,\pi^i=0,, i=0,1$ are given by:
\begin{equation}
\pi^0= {}_2F_1(1/2/1/2,1,u)\,,\quad \pi^1=\frac{i}{2}{}_2F_1(1/2/1/2,1,1-u)\,,
\end{equation}
where we chose the normalization such that the modular coordinate is:
$$
\tau =\frac{\pi^1}{\pi^0}\,.$$
Their monodromy group is $\Gamma_0(4)$.
Generators of the corresponding quasi-modular forms (see Appendix \ref{modularappendix}) are given by
\begin{eqnarray}
A&=& \theta_3^2(2\tau)\,, \\
B&=& \theta_4^2(2\tau)\,,\\
E&=& \frac{1}{3} \left(4 E_2(4\tau) + E_2(\tau)\right) -\frac{2}{3} E_2(2\tau)\,.
\end{eqnarray}
These satisfy the following differential ring relations:
\begin{align}\label{legendrediffring}
\partial_\tau A &=\frac{1}{4}A (E + A^2-2B^2)\,,\\ \nonumber
\partial_\tau B&= \frac{1}{4} B(E-A^2)\,,\\ \nonumber
\partial_\tau E&=\frac{1}{4}(E^2-A^4)\,.
\end{align}

We obtain expressions for the geometric data in terms of quasi-modular forms:
\begin{equation}\label{legendremodular}
    \pi^0(\tau)= A(\tau)\,, \quad
\mathrm{and}\quad e^{-K}= 2 |A(\tau)|^2 \Im \tau\,, \quad z(\tau)= 1- \frac{B(\tau)^2}{A(\tau)^2}\,.
\end{equation}
From the Picard--Fuchs operator we read off:
\begin{equation} \label{legendredata}
    b_1(u)= \frac{1-2u}{u(1-u)}\,, \quad b_0(u)=- \frac{1}{4 u(1-u)}\,.
\end{equation} 

In this case, the coordinate expression for the Griffiths--Yukawa coupling is given by
\begin{equation}
c_u = \frac{\kappa}{u (1-u)}\,,
\end{equation}
where $\kappa$ is an integration constant which we will set to 1.

One can now easily verify with the Picard--Fuchs coefficients $b_0, b_1$ in \eqref{legendredata} as well as the modular expressions in \eqref{legendremodular}  the differential ring relations \eqref{ranktwodiffring}. 
These translate into the differential ring relations for the quasi--modular forms for $\Gamma_0(4)$ \eqref{legendrediffring}.

Our analysis shows that the data of the oper corresponds to the data of the first order formulation of the Picard--Fuchs equation describing the VHS. The latter does however not correspond to the Higgs field of the non-abelian Hodge flat connection as was assumed in \cite[Table I]{Dumitrescu2014a}. Rather, the oper data is a combination of the Higgs field and holomorphic remnants of a gauge transformation.

\begin{ex}[Cubic curve] 
We consider the cubic curve $\check{\CY}$ given by a section of the anti-canonical bundle over the projective plane $\mathbbm{P}^2$ and its mirror $\CY$. The vertices of $\Delta$ are given by:
\begin{equation}
\nu_0=(1,0,0)\,, \quad \nu_1=(1,1,0)\,, \quad \nu_2=(1,0,1)\,, \quad \nu_3= (1,-1,-1)\, ,
\end{equation}
satisfying the relation $\sum_{i=0}^{3} l_i \nu_i=0$ where:
\begin{equation}
l=\begin{array}{c|ccc}
(-3&1&1&1)
\end{array}\,.
\end{equation}
$\CY$ is defined as a suitable compactification of
\begin{equation}
\{(X_1,X_2)\in (\C^\times)^2~|~ P(\CY)= a_0 + a_1 X_1 + a_2 X_2 +\frac{a_3}{X_1 \, X_2} =0 \}\,.
\end{equation}
We define a local coordinate $z=-\frac{27 a_1 a_2 a_3}{a_0^3}$ on the moduli space  $\mathcal{B}$ of $\CY$ and obtain the Picard--Fuchs operator:
\begin{equation}\label{PFcurve}
L= \theta^2-  z (\theta +1/3) ( \theta+2/3)\,,\quad \theta=z\frac{d}{dz}\,,
\end{equation}
with discriminant $\Delta=1-z$. We determine the exponents to be:
\begin{center}
\begin{tabular}{l | l  l  l}
d & 0 & 1 & $\infty$ \\ \hline
$\mu_1^d$ & 0 & 0 & $\frac{1}{3}$ \\
$\mu_2^d$ & 0 & 0 & $\frac{2}{3}$
\end{tabular}
\end{center}
Again Theorem \ref{thm:degEpq} implies 
\begin{equation*}
\hat{E}\cong \Oo_{\mathbb{P}^1}\oplus \Oo_{\mathbb{P}^1}(-1). 
\end{equation*}
But the parabolic structure differs from the one in Example \ref{ex:ell1}.
It is again trivial at $d=0,1$ but at $d=\infty$ it is given by 
\begin{center}
\begin{tabular}{c c c c c c l}
$\hat{E}_\infty$ & $=$ & $\hat{E}_\infty(\tfrac{1}{3})$ & $\supsetneq$ & $\hat{E}_\infty(\tfrac{2}{3})$ & $\supsetneq$ & $0$.
\end{tabular}
\end{center}
It follows that  
$
\mathrm{pdeg}(\hat{E}^{1,0})=\frac{1}{3}=-\mathrm{pdeg}(\hat{E}^{0,1}).
$

The solutions of $L \,\pi^i=0$ for $i=0,1$ are given by
\begin{equation}
\pi^0= {}_2F_1(1/3/2/3,1,z)\,,\quad \pi^1=\frac{i}{2\pi\sqrt{3}} {}_2F_1(1/3/2/3,1,1-z)\,,
\end{equation}
with monodromy group $\Gamma_0(3)$.
We chose the normalization such that the modular coordinate is:
\begin{equation}
\tau =\frac{\pi^1}{\pi^0}=\frac{1}{2\pi i}\log \left(\frac{z}{27}\right)+\frac{5 z}{9}+\frac{37 z^2}{162}+\dots
\end{equation}
As generators of the quasi-modular forms of $\Gamma_0(3)$ we choose
\begin{eqnarray}
E&=& \frac{1}{4} (E_2(\tau)+ 3 E_2(3\tau))\,,\\
A&=& \frac{(27 \eta(3\tau)^{12} +\eta(\tau)^{12})^{1/3}}{\eta(\tau) \eta(3\tau)}\,, \\
B&=& \frac{\eta(\tau)^3}{\eta(3\tau)}\,,\\
C&=& \frac{3(\eta(\tau))^3}{\eta(\tau)}\,,
\end{eqnarray}
which satisfy the algebraic relation:
\begin{equation}
A^3=B^3+C^3\,,
\end{equation}
as well as the differential ring relations:
\begin{align}\label{cubicdiffring}
\partial_\tau A &=\frac{1}{6} (E\, A + A^3-2B^3)\,,\\ \nonumber
\partial_\tau B&= \frac{1}{6} B(E-A^2)\,,\\ \nonumber
\partial_\tau E&=\frac{1}{6}(E^2-A^4)\,.
\end{align}

As before, the quasi-modular forms completely determine the periods, K\"ahler metric and mirror map
\begin{equation}\label{cubicmodular}
    \pi^0(\tau)= A(\tau)\,, \quad
\mathrm{and}\quad e^{-K}= 2 |A(\tau)|^2 \Im \tau\,, \quad z(\tau)= 1- \frac{B(\tau)^3}{A(\tau)^3}\,.
\end{equation}
From the Picard--Fuchs operator we read off:
\begin{equation} \label{cubicdata}
    b_1(z)= \frac{1-2z}{z(1-z)}\,, \quad b_0(z)=- \frac{2}{9 z(1-z)}\,.
\end{equation} 

The coordinate expression Griffiths--Yukawa coupling in this case is given by
\begin{equation}
c_z = \frac{\kappa}{z (1-z)}\,,
\end{equation}
where $\kappa$ is an integration constant which we will set to 1. 

One can now again  verify with the Picard--Fuchs data \eqref{cubicdata} as well as the modular expressions \eqref{cubicmodular} the differential ring relations \eqref{ranktwodiffring}, which in turn translate into the differential ring relations for the quasi--modular forms for $\Gamma_0(3)$ \eqref{cubicdiffring}.
\end{ex}

\subsection{Higher rank}

For higher rank, we consider (complete) families of Calabi--Yau $d$-folds ($d\geq 2$)
$$\pi: \mathcal{X}\rightarrow \mathcal{B}$$
with fibers $\pi^{-1}(u)=X_u\,,u\in \mathcal{B}$ and again $\dim_{\C} \mathcal{B}=1$. 
In \S\ref{explicit} we have explicitly discussed the rank 3 case attached to the VHS $\mathcal{H}$ of mirror lattice polarized K3 surfaces. We will therefore only shortly provide the definition of the mirror without repeating the analysis of \S\ref{explicit}.

\begin{ex}[The mirror quartic]\label{ex:mirrorquartic}
We consider the $K3$ surface $\check{\CY}$ given by a quartic hypersurface in $\mathbbm{P}^3$, and its mirror $\CY$. 
The vertices of $\Delta$ are given by:
\begin{eqnarray*}
&&\nu_0=(1,0,0,0)\,, \quad \nu_1=(1,1,0,0)\,, \quad \nu_2=(1,0,1,0)\,, \\
&&\nu_3= (1,0,0,1)\, , \quad \nu_4=(1,-1,-1,-1)
\end{eqnarray*}
satisfying the relation $\sum_{i=0}^{4} l_i \nu_i=0$ where:
\begin{equation}
l=\begin{array}{c|cccc}
(-4&1&1&1&1)
\end{array}\,.
\end{equation}
$\CY$ is defined by:
\begin{equation}
\CY= \{ P(\CY)= a_0 + a_1 X_1 + a_2 X_2 + a_3 X_3+ a_4 (X_1 X_2 X_3)^{-1} =0 \subset (\mathbbm{C}^*)^3 \}\,.
\end{equation}
We define a local coordinate $z=\frac{4^4\,a_1 a_2 a_3 a_4}{a_0^4}$ on the moduli space $\mathcal{M}$ of the mirror quartic $\CY$ and obtain the Picard-Fuchs operator:
\begin{equation}
L_{PF}= \theta^3-  z \prod_{i=1}^3\,(\theta +i/4) \,,\quad \theta=z\frac{d}{dz}\,.
\end{equation}

\end{ex}

In the following we explain how how the gauge transformation, and hence differential ring relations, carry over to higher rank. 

Let $\mathcal{H}$ be the VHS of weight $d$ attached to $\pi$ with the line subbundle $\mathcal{L}:=F^d\mathcal{H}$.
We choose a local frame $\omega_0$ of $\mathcal{L}$, which we may assume to be a cyclic vector, given by a fiberwise choice of holomorphic $d$-form $\omega_0$, we moreover assume that it depends further on $\hbar\in \C^{\times}$. 
By working on a local coordinate chart of $\mathcal{B}$ with coordinate $u$, we arrive at the $(d+1)^{th}$ order equation
\begin{equation}\label{rankdPF}
    \hbar^{d+1}\nabla_u^{d+1} \,\omega_0 =-\sum_{i=0}^d b_i\, \hbar^{i} \nabla^i_u \,\omega_0 \,, \quad \nabla_u:=\nabla_{\partial/\partial u}\,.
\end{equation}
It yields the Picard--Fuchs equation for the periods of $\omega_0$ (around singular points).
The Griffiths--Yukawa coupling $c\in \Gamma(\mathcal{L}^{-2}\otimes \mathrm{Sym}^d (T^*\mathcal{B}))$ is defined by
\begin{equation}
    c(\omega):=\eta(\omega, \nabla^d \omega), \quad \omega\in \mathcal{L}, 
\end{equation}
cf. Definition \ref{def:gy}.
Its coordinate expression with respect to $u$ and $\omega_0$ is 
$$ 
c_{u\dots u} := \eta(\omega_0,\nabla_u^d \omega_0).$$
The frame
\begin{equation*}
\omega^{\hbar}_{\mathrm{NAH}}=(\omega_0,\hbar\omega_1,\dots, \hbar^d \omega_d)
\end{equation*}
of the smooth bundle $\mathcal{H}$ is constructed as in \S\ref{explicit} or \S\ref{ss:rank2}.
In particular, $\omega_i$ is a local frame of $\mathcal{H}^{d-i,i}$.
Since $\omega_0$ is a cyclic vector,  
$$\omega^{\hbar}_{\mathrm{GM}} = \left( \omega_0\,,  \hbar \nabla_u \omega_0\, \de u \,, \dots  \hbar^{d}\nabla^{d}_u \omega_0 \, \de u^d \right)\, $$
is a frame as well. 
The gauge transformation $\mathcal{A}^{\hbar}$, which transforms $\omega^{\hbar}_{\mathrm{GM}}$ into $\omega^{\hbar}_{\mathrm{NAH}}$,
$$ \omega_{\mathrm{GM}}^{\hbar}= \omega_{\mathrm{NAH}} \, \mathcal{A}^{\hbar}\,,$$ can be explicitly determined by writing out $\hbar^k\,(\nabla_u-D_u)^k \omega_0\,, k=1,\dots,d$ and expressing the result in terms of $\omega^{\hbar}_{\mathrm{GM}}$.
As before it satisfies 
\begin{equation}
    \nabla_{\mathrm{GM}}^\hbar=(\mathcal{A}^{\hbar})^{-1}\circ \nabla_{\mathrm{NAH}}^\hbar \circ \mathcal{A}^{\hbar}.
\end{equation}
Here $\nabla_{\mathrm{NAH}}^\hbar$ is the family of flat $tt^*$-connections and
\begin{equation}
\nabla^{\hbar}_{\mathrm{GM}} = \de +\frac{1}{\hbar} \begin{pmatrix} 0 & 0 & 0 & -b_0 \\
1 & 0 & \ddots & -b_1 \\
\vdots & \ddots & \ddots & \vdots \\
0 & \cdots & 1 & -b_{d} \end{pmatrix} \, du
\end{equation}
of the family opers associated to the Gau\ss--Manin connection as before. 
\\

We next determine the parabolic Higgs bundles induced by families of Calabi--Yau threefolds.  
\begin{ex}[Mirror quintic]
We consider the quintic threefold $\check{\CY}$ given by a quintic hypersurface in $\mathbbm{P}^4$, and its mirror $\CY$.
This can be described by the toric charge vector:
\begin{equation}
l=\begin{array}{c|ccccc}
(-5&1&1&1&1&1).
\end{array}
\end{equation}
We define a local coordinate $z=-3125\frac{a_1\,a_2\,a_3\,a_4\,a_5}{a_0^5}$ on the moduli space $\mathcal{B}$ of the mirror $\CY$ and obtain the Picard--Fuchs operator:
\begin{equation}\label{PFquintic}
L=\theta^4- z \prod_{i=1}^4 (\theta+i/5)\,, \quad \theta=z \frac{d}{dz}\, .
\end{equation}

The exponents satisfy
\begin{center}
\begin{tabular}{l | l  l  l}
d & 0 & 1 & $\infty$ \\ \hline
$\mu_1^d$ & 0 & 0 & $\frac{1}{5}$ \\
$\mu_2^d$ & 0 & 1 & $\frac{2}{5}$ \\
$\mu_3^d$ & 0 & 1 & $\frac{3}{5}$ \\
$\mu_4^d$ & 0 & 2 & $\frac{4}{5}$ \; .
\end{tabular}
\end{center}
Hence $\hat{E}$ is given by 
\begin{equation*}
\hat{E}=\hat{E}^{3,0}\oplus \hat{E}^{2,1}\oplus \hat{E}^{1,2}\oplus \hat{E}^{0,3}\cong\Oo_{\mathbb{P}^1}\oplus \Oo_{\mathbb{P}^1} \oplus \Oo_{\mathbb{P}^1}(-1)\oplus \Oo_{\mathbb{P}^1}(-1). 
\end{equation*}
The parabolic structure is only non-trivial at $\infty$ with 
\begin{center}
\begin{tabular}{c c c c c c c c l}
$\hat{E}_\infty$ & $=$ & $\hat{E}_\infty(\tfrac{1}{5})$ & $\supsetneq$ & $\hat{E}_\infty(\tfrac{2}{5})$ & $\supsetneq$ & $\hat{E}_\infty(\tfrac{3}{5})$ & $\supsetneq$ $\hat{E}_\infty(\tfrac{4}{5})$ & $\supsetneq$ 0
\end{tabular}
\end{center}
Moreover, we compute
\begin{gather*}
\mathrm{pdeg}(\hat{E}^{3,0})=\frac{1}{5}=-\mathrm{pdeg}(\hat{E}^{0,3}), \\
\mathrm{pdeg}(\hat{E}^{2,1})=\frac{2}{5}=-\mathrm{pdeg}(\hat{E}^{1,2}). 
\end{gather*}
\end{ex}

\begin{ex}[14 VHS of Calabi-Yau type]\label{ex:14VHS}
The previous example generalizes to all of the 14 variations of Hodge structures of Calabi-Yau type with three regular singular points with a maximally unipotent point at $0$ and a conifold point at $1$ (see \cite{DoranMorgan}). 
The exponents in these examples satisfy
\begin{center}
\begin{tabular}{l | l  l  c}
d & 0 & 1 & $\infty$ \\ \hline
$\mu_1^d$ & 0 & 0 & $\mu_1$ \\
$\mu_2^d$ & 0 & 1 & $\mu_2$ \\
$\mu_3^d$ & 0 & 1 & $1-\mu_2$ \\
$\mu_4^d$ & 0 & 2 & $1-\mu_1$
\end{tabular}
\end{center}
for $0 < \mu_1 \leq \mu_2 \leq \frac{1}{2}$ and we immediately obtain 
\begin{equation*}
\hat{E}\cong \mathcal{O}_{\mathbb{P}^1} \oplus \mathcal{O}_{\mathbb{P}^1}\oplus \mathcal{O}_{\mathbb{P}^1}(-1) \oplus \mathcal{O}_{\mathbb{P}^1}(-1). 
\end{equation*}
The parabolic structure is determined analogously as before. 
Corollary \ref{cor:conjpdeg} gives
\begin{gather*}
\mathrm{pdeg}(\hat{E}^{3,0})=\mu_1=-\mathrm{pdeg}(\hat{E}^{0,3}), \\
\mathrm{pdeg}(\hat{E}^{2,1})=\mu_2=-\mathrm{pdeg}(\hat{E}^{1,2}), 
\end{gather*}
which in particular reproduces Theorem 6.3 in \cite{EskinEtAl}. 
\end{ex}

\section{Conclusion}

In this work we have linked (parabolic) Higgs bundles and opers to mirror symmetry in non-trivial ways. Along the way, we have shown that opers with a compatible integral structure on any Riemann surface are equivalent to VHS with a generic cyclic vector.
In these cases, we computed the parabolic degrees of the induced parabolic Higgs bundles in terms of the exponents of the corresponding Picard--Fuchs equations which we have worked out explicitly in examples from mirror symmetry. 

In these examples, the $tt^*$ or non-abelian Hodge flat connections are gauge equivalent to the opers determined by the Gau\ss--Manin connections. This is because the corresponding Higgs bundles are fixed points of the $\C^\times$ action $\varphi \to \zeta \varphi$. In all of these cases, the conformal limit $\nabla_{\mathrm{NAH}}^\hbar:=\lim_{R \to 0} \nabla_{\mathrm{NAH}}^{R \hbar, R}$ of \cite{Gaiotto:2009hg,Dumitrescu2016a} is trivial because $\nabla_{\mathrm{NAH}}^{R \hbar, R}$ is independent of $R$. The gauge transformation relating $\nabla_{\mathrm{NAH}}^\hbar$ and the Gau\ss-Manin connection $\nabla_{\mathrm{GM}}^\hbar$ 
gives a new derivation of the differential rings on $\mathcal{B}$ which generalize the Ramanujan relation between quasi-modular forms. 

We expect our analysis of the gauge transformation relating the opers to the non-abelian Hodge flat connection to be useful in the study of exact WKB methods for higher order differential operators as was done e.~g.~in \cite{Hollands:2019wbr}. Moreover, there have been many exciting links between Higgs bundles, opers, exact WKB and the topological recursion, see for example \cite{Dumitrescu2015,Dumitrescu2017}. We expect that the further investigation of the connections to non-abelian Hodge theory, $tt^*$ geometry and mirror symmetry will lead to further exciting insights.  

Finally, let us comment on the role of the base curve $\mathcal{B}$ and potential generalizations. In our work, $\mathcal{B}$ is both the base of the parabolic bundles as well as the base of the $tt^*$-geometry. In the $tt^*$ geometries which we have considered in this work, $\mathcal{B}$ is the complex one-dimensional moduli space of a Calabi--Yau $d$-fold. The techniques which we have employed for obtaining the frames for the non-abelian Hodge ($tt^*$) flat connections, as well as the gauge transformation to the oper can be easily generalized to higher dimensional $\mathcal{B}$, with higher rank Higgs bundles. The differential rings, which we re-derived using the gauge transformation are also known for the Hodge bundles of CY $d-$folds, with $d=1,2,3$, see \cite{Alim:2014vea}. We note that the holomorphic Gau\ss-Manin connnection obtained in these cases would represent a generalization of the notion of oper used in our work.

We would like to further comment on the relation to the works of Gaiotto, Moore and Neitzke (GMN) \cite{Gaiotto:2009hg}, where Hitchin systems and $tt^*$-like equations feature prominently. In the setup of GMN, the base curve $\mathcal{C}$ of the Higgs bundle is a curve associated to a physical $\mathcal{N}=2$ gauge theory in four dimensions, called the UV curve. A covering of the base curve $\mathcal{C}$, gives the IR curve $\Sigma$. The UV curve $\mathcal{C}$ of a given physical theory has a moduli space $\mathcal{B}$, which corresponds to the Coulomb branch of the physical theory. 

To establish the connection to moduli spaces of Calabi-Yau geometries as we have studied, the GMN setting can be understood as being obtained from a field theory limit of 10 dimensional string theories living on the four dimensional space times a non-compact Calabi-Yau manifold. Mirror symmetry in this context refers to the fact that identical four dimensional theories can be obtained from two different string theories considered on mirror families of
non-compact CY manifolds. 
The Seiberg--Witten curves can be understood as a degeneration locus of the non-compact CY manifold in question on the B-side of mirror symmetry. The relevant data on the curve is naturally obtained from the variation of Hodge structure and special geometry of the underlying threefolds, this was first obtained in \cite{Klemm:1996bj}.

In this geometric setting, the UV curves are, however, \emph{not} the complex structure moduli spaces $\mathcal{B}$ of the underlying CY threefolds which were used to obtain them. 
The curves can nevertheless be understood as moduli spaces of objects living on the geometry. 
This interpretation was put forward in the work of Aganagic and Vafa \cite{Aganagic:2000gs}, where the mirror curves are identified with the moduli space of branes ending on points of the curve, giving components of the open string moduli space. Physically the curves are moduli spaces of defects of the theory. The Hitchin systems of GMN thus correspond to $tt^*$-equations on moduli spaces of objects of the underlying geometry as opposed to the $tt^*$-equations attached to the geometry itself.


\begin{appendix}
\section{Quasi modular forms and differential rings}\label{modularappendix}

In this appendix we summarize some basic concepts about modular forms and quasi modular forms, following the exposition of \cite{ASYZ}, and we refer to \cite{Diamond,Zagier} and the references therein for more details on the basic theory.

\subsection{Modular groups and modular curves}
The generator for the group $SL(2,\mathbb{Z})$ are given by:
\begin{equation}
  \label{eq:monodromies}
T =
  \begin{pmatrix}
    1 & 1\\ 0 & 1
  \end{pmatrix}\,,\quad
  S=
  \begin{pmatrix}
    0& -1 \\ 1 & 0\\
  \end{pmatrix}\,,\quad S^{2}=-I\,,\quad (ST)^{3}=-I\,.
\end{equation}
We consider the genus zero congruence subgroups called Hecke subgroups of $\Gamma(1)=PSL(2,\mathbb{Z})=SL(2,\mathbb{Z})
/\{\pm I\}$ given by:
\begin{equation}
\Gamma_{0}(N)=\left\{
\left.
\begin{pmatrix}
a & b  \\
c & d
\end{pmatrix}
\right\vert\, c\equiv 0\,~ \mathrm{mod} \,~ N\right\}< \Gamma(1)
\end{equation}
with $N=2,3,4$. A further subgroup that is considered is the unique normal subgroup in $\Gamma(1)$ of index 2 which is often
denoted $\Gamma_0(1)^*$. We write $N=1^*$ when
listing it together with the groups $\Gamma_0(N)$.

The group $SL(2,\mathbb{Z})$ acts on the upper half plane $\mathcal{H}
= \{ \tau \in \mathbb{C} |\, \mathrm{Im} \tau > 0 \}$ by fractional linear
transformations:
$$\tau \mapsto \gamma\tau=\frac{a\tau+b}{c\tau+d}\quad \mbox{for} \quad
\gamma=\begin{pmatrix} a&b\\c&d\end{pmatrix} \in
SL(2,\mathbb{Z})\,.$$ The quotient space $Y_{0}(N)=
\Gamma_{0}(N)\backslash \mathcal{H}$ is a non-compact orbifold with
certain punctures corresponding to the cusps and orbifold points
corresponding to the elliptic points of the group $\Gamma_{0}(N)$.
By filling the punctures, one then gets a compact orbifold
$X_{0}(N)=\overline{Y_{0}(N)}=\Gamma_{0}(N)\backslash
\mathcal{H}^{*}$ where $\mathcal{H}^* = \mathcal{H} \cup \{i\infty\}
\cup \mathbb{Q}$. The orbifold $X_0(N)$ can be equipped with the
structure of a Riemann surface. The signature for the group
$\Gamma_{0}(N)$ and the two orbifolds $Y_{0}(N),X_{0}(N)$ could be
represented by $\{p,\mu;\nu_{2},\nu_{3},\nu_{\infty}\}$, where $p$
is the genus of $X_{0}(N)$, $\mu$ is the index of $\Gamma_{0}(N)$ in
$\Gamma(1)$, and $\nu_{i}$ are the numbers of
$\Gamma_{0}(N)$-equivalent elliptic fixed points or parabolic fixed
points of order $i$. The signatures for the groups $\Gamma_{0}(N)$,
$N=1^*,2,3,4$ are listed in the following table (see
e.g.~\cite{Rankin}):
\begin{equation}\label{signature}
  \begin{array}[h]{|c|c|c|c|c|c|}
    \hline
    N & \nu_2 & \nu_3 & \nu_\infty & \mu & p\\
    \hline
    1^* & 0 & 1 & 2 & 2 & 0 \\
    2 & 1 & 0 & 2 & 3 & 0 \\
    3 & 0 & 1 & 2 & 4 & 0\\
    4 & 0 & 0 & 3 & 6 & 0\\
   \hline
  \end{array}
\end{equation}

The space $X_{0}(N)$ is called a modular curve and is the moduli space
of pairs $(E,C)$, where $E$ is an elliptic curve and $C$ is a cyclic
subgroup of order $N$ of the torsion subgroup
$E_{N} \cong \mathbb{Z}_{N}^{2}$. It classifies each cyclic $N$-isogeny $\phi: E\rightarrow E/C$ up to isomorphism, see for example Refs.~\cite{Diamond,Husemoller} for more details.

In the following, we will denote by $\Gamma$ a general subgroup of finite index in $\Gamma(1)$.

\subsection{Quasi modular forms}

\subsubsection{Modular functions}
\renewcommand{\labelenumi}{(\roman{enumi})}
A (meromorphic) modular function with respect to the a subgroup $\Gamma$ of finite index in $\Gamma(1)$ is a meromorphic function $f : X_{\Gamma}\rightarrow \mathbb{P}^{1}$.
Consider the restriction of $f$ to $Y_{\Gamma}=\Gamma\backslash \mathcal{H}$.
Since the restriction is meromorphic, we know $f$ can be lifted to
a function $f$ on $\mathcal{H}$. Then one gets a function $f: \mathcal{H}\rightarrow \mathbb{P}^{1}$
such that
\begin{enumerate}
\item $\,f(\gamma \tau)=f(\tau), \quad\forall \gamma\in \Gamma\,$.
\item $\,f$ is meromorphic on $\mathcal{H}$.
\item $\,f$ is ``meromorphic at the cusps" in the sense that the function
\begin{equation}
f|_{\gamma}: \tau\mapsto f(\gamma\tau)
\end{equation}
is meromorphic at $\tau=i\infty$ for any $\gamma\in \Gamma(1)$.
\end{enumerate}
The third condition requires more explanation. For any cusp class
$[\sigma] \in \mathcal{H}^*/\Gamma$\footnote{We use the notation
$[\tau]$ to denote the equivalence class of $\tau\in
\mathcal{H}^{*}$ under the group action of $\Gamma$ on
$\mathcal{H}^{*}$.} with respect to the modular group $\Gamma$, one
chooses a representative $\sigma\in \mathbb{Q} \cup \{i\infty\}$.
Then it is easy to see that one can find an element $\gamma\in
\Gamma(1)$ so that $\gamma: i\infty\mapsto \sigma$. Then this
condition means that the function defined by $\tau\mapsto f\circ
\gamma\,(\tau)$ is meromorphic near $\tau=i\infty$ and that the
function $f$ is declared to be ``meromorphic at the cusp $\sigma$"
if this condition is satisfied.

Therefore, equivalently, a (meromorphic) modular function
with respect to the modular group is a meromorphic function $f : \mathcal{H}\rightarrow \mathbb{P}^{1}$ satisfying the above properties on modularity, meromorphicity, and growth condition at the cusps.

\subsubsection{Modular forms}
Similarly, we can define a (meromorphic) modular form of weight $k$ with respect to the group $\Gamma$ to be a (meromorphic) function $f : \mathcal{H}\rightarrow \mathbb{P}^{1}$ satisfying the following conditions:
\begin{enumerate}
\item $\,f(\gamma \tau)=j_{\gamma}(\tau)^{k}f(\tau), \quad \forall \gamma\in \Gamma\,$, where $j$ is called the automorphy factor defined by

$$j: \Gamma \times \mathcal{H}\rightarrow
\mathbb{C},\quad \left(\gamma=\left(
\begin{array}{cc}
a & b  \\
c & d
\end{array}
\right),\tau\right)\mapsto j_{\gamma}(\tau):=(c\tau+d)\,.$$
\item $\,f$ is meromorphic on $\mathcal{H}$.
\item $\,f$ is ``meromorphic at the cusps" in the sense that the function
\begin{equation}
\label{slash}
f|_{\gamma}: \tau\mapsto j_{\gamma}(\tau)^{-k} f(\gamma\tau)
\end{equation}
is meromorphic at $\tau=i\infty$ for any $\gamma\in \Gamma(1)$.
\end{enumerate}

\subsubsection{Quasi modular forms}\label{dfnofquasiform}
A (meromorphic) quasi modular form of weight $k$ with respect to the group $\Gamma$ is a (meromorphic) function $f : \mathcal{H}\rightarrow \mathbb{P}^{1}$ satisfying the following conditions:
\begin{enumerate}
\item $\,$ There exist meromorphic functions $f_{i}, i=0,1,2,3,\dots, k-1$ such that
\begin{equation}
f( \gamma\tau)=j_{\gamma}(\tau)^{k}f(\tau)+\sum_{i=0}^{k-1} c^{k-i}\,j_{\gamma}(\tau)^i  f_{i}(\tau)\,,\quad \forall  \gamma=\left(
\begin{array}{cc}
a & b  \\
c & d
\end{array}
\right)\in \Gamma\,.
\end{equation}
\item $\,f$ is meromorphic on $\mathcal{H}$.
\item $\,f$ is ``meromorphic at the cusps" in the sense that the function
\begin{equation}
f|_{\gamma}: \tau\mapsto j_{\gamma}(\tau)^{-k} f(\gamma\tau)
\end{equation}
is meromorphic at $\tau=i\infty$ for any $\gamma\in \Gamma(1)$.
\end{enumerate}

We proceed by introducing the modular forms which are used in our paper, starting with the Jacobi theta functions with characteristics $(a,b)$ defined by:
\begin{equation}
\vartheta \left[\!\!\! \begin{array}{c}a\\ b\end{array}\!\!\!\right](z,\tau)=\sum_{n\in \mathbbm{Z}}  q^{{\frac{1}{2}}(n+a)^2} e^{2\pi i (n+a)(z+b)} \,.
\end{equation}

for special $(a,b)$ these are denoted by:

\begin{align}
\theta_1(z,\tau)&=\vartheta \left[\!\!\! \begin{array}{c}1/2\\ 1/2\end{array}\!\!\!\right](u,\tau)=\sum_{n\in \mathbbm{Z}+{\frac{1}{2}}}  (-1)^{n}q^{{\frac{1}{2}}n^2}e^{2\pi i n z}\,,\\
\theta_2(z,\tau)&=\vartheta \left[\!\!\! \begin{array}{c}1/2\\ 0\end{array}\!\!\!\right](u,\tau)=\sum_{n\in \mathbbm{Z}+{\frac{1}{2}}}  q^{{\frac{1}{2}}n^2}e^{2\pi i n z}\,,\\
\theta_3(z,\tau)&=\vartheta \left[\!\!\! \begin{array}{c}\,\,\,0\,\,\,\\ \,\,\,0\,\,\,\end{array}\!\!\!\right](u,\tau)=\sum_{n\in \mathbbm{Z}}  q^{{\frac{1}{2}}n^2}e^{2\pi i n z}\,,\\
\theta_4(z,\tau)&=\vartheta \left[\!\!\! \begin{array}{c}0\\ 1/2\end{array}\!\!\!\right](u,\tau)=\sum_{n\in \mathbbm{Z}} (-1)^n q^{{\frac{1}{2}}n^2}e^{2\pi in z}\,.
\end{align}

We further define the following $\theta$--constants:
\begin{equation}
\theta_{2}(\tau)=\theta_2(0,\tau),\quad \theta_{3}(\tau)=\theta_3(0,\tau),\quad \theta_{4}(\tau)=\theta_2(0,\tau)\,.
\end{equation}
The $\eta$--function is defined by
\begin{equation}
\eta(\tau)=q^{\frac{1}{24}}\prod_{n=1}^\infty(1-q^n)\,.
\end{equation}
It transforms according to
\begin{equation}\label{etatrafo}
\eta(\tau+1)=e^{\frac{i\pi}{12}}\eta(\tau),\qquad \eta\left(-\frac{1}{\tau}\right)=\sqrt{\frac{\tau}{i}}\,\eta(\tau)\,.
\end{equation}
The Eisenstein series are defined by
\begin{equation}\label{eisensteinseries}
E_k(\tau)=1-\frac{2k}{B_k}\sum_{n=1}^\infty\frac{n^{k-1}q^n}{1-q^n},
\end{equation}
where $B_k$ denotes the $k$-th Bernoulli number. $E_k$ is a modular form of weight $k$ for $k>2$ and even. The discriminant form and the $j$
invariant are given by
\begin{align}
   \Delta(\tau) &= \frac{1}{1728}\left({E_4}(\tau)^3-{E_6}(\tau)^2\right) = \eta(\tau)^{24},\\
    j(\tau)& = 1728\frac{E_{4}(\tau)^{3}}{ E_{4}(\tau)^3-{E_6}(\tau)^2}\,.
\end{align}

\subsection{Differential ring}\label{app:differential_ring}
The modular forms obey the following differential equations:
\begin{align}
\partial_{\tau}\log \eta(\tau)&=\frac{1} {24}E_{2}(\tau)\,,\\
\partial_{\tau}\log \sqrt{\mathrm{Im}~\tau}|\eta(\tau)|^2&={\frac{1}{24}}\widehat{E_{2}}(\tau,\bar{\tau})\, .
\end{align}
where we denote by $\partial_{\tau} :={\frac{1}{2\pi i}}{\frac{\partial}{\partial \tau}}$, $\widehat{E}_2$ is the non-homolorphic modular completion of the quasi-modular form $E_2$. $E_2,E_4 $ and $E_6$ satisfy the following differential ring:

\begin{equation}
\begin{aligned}
                  \partial_\tau E_{2}&={\frac{1}{12}}(E_{2}^2-E_{4})\,,\\
                  \partial_\tau E_{4}&={\frac{1}{3}}(E_{2}E_{4}-E_{6})\,,\\
\partial_\tau E_{6}&={\frac{1}{2}}(E_{2}E_{6}-E_{4}^{2})\,.
                          \end{aligned}
                          \end{equation}

For the subgroups $\Gamma_0(N)$ we introduce three modular forms $A,B,C$ of weight 1, which are given by:
\begin{equation}
    \hspace{2.5em} \begin{array}{c|ccc}\renewcommand{\arraystretch}{0.5}
        N&A&B&C\\[.2ex]
1^{*}&E_{4}(\tau)^{\frac{1}{4}}&(\frac{E_{4}(\tau)^{\frac{3}{2}}+E_{6}(\tau)}{2})^{\frac{1}{6}}&(\frac{E_{4}(\tau)^{\frac{3}{2}}-E_{6}(\tau)}{2})^{\frac{1}{6}}\\[.5ex]
2&\frac{(2^{6}\eta(2\tau)^{24}+\eta(\tau)^{24} )^{\frac{1}{4}}}{
\eta(\tau)^2\eta(2\tau)^2}&\frac{\eta(\tau)^{4}}{\eta(2\tau)^{2}}&2^{\frac{3}{
2}}\frac{\eta(2\tau)^4}{\eta(\tau)^2}\\[1ex]
3&\frac{(3^{3}\eta(3\tau)^{12}+\eta(\tau)^{12} )^{\frac{1}{3}}}{\eta(\tau)\eta(3\tau)}&\frac{\eta(\tau)^{3}}{\eta(3\tau)}&3\frac{\eta(3\tau)^3}{\eta(\tau)}\\[1ex]
4&\frac{(2^{4}\eta(4\tau)^{8}+\eta(\tau)^{8} )^{\frac{1}{2}}}{\eta(2\tau)^2}=
\frac{\eta(2\tau)^{10}}{\eta(\tau)^{4}\eta(4\tau)^{4}}&\frac{\eta(\tau)^{4}}{ \eta(2\tau)^2}&2^2\frac{\eta(4\tau)^4}{ \eta(2\tau)^2}
\end{array}
\end{equation}

These satisfy by definition
\begin{equation}
A^{r}=B^{r}+C^{r}\,.
\end{equation}

with the following values of $r$:
\begin{equation*}
 \begin{array}{c|ccccc}
N&1^{*}&2&3&4&\\
r&6&4&3&2
\end{array}
\end{equation*}
We introduce the analog of the Eisenstein series $E_{2}$ as a quasi-modular form as follows:
\begin{eqnarray}
E=\partial_\tau \log
B^{r}C^{r}\,.
\end{eqnarray}
The differential ring structure becomes:
\begin{equation}
\begin{aligned}
\partial_\tau A&=\frac{1}{2r}A(E+\frac{C^{r}-B^{r}}{ A^{r-2}})\,,\\
\partial_\tau B&=\frac{1}{2r}B(E-A^{2})\,,\\
\partial_\tau C&=\frac{1}{2r}C(E+A^{2})\,,\\
\partial_\tau E&=\frac{1}{2r}(E^{2}-A^{4})\,.
  \end{aligned}
\end{equation}


\section{Parabolic Higgs bundles}
Let $D\subset C$ be an effective divisor in a compact Riemann surface. 
A parabolic Higgs bundle on $(C,D)$ is a pair $(\hat{E},\hat{\varphi})$ consisting of 
\begin{itemize}
    \item 
    a holomorphic vector bundle $\hat{E}$ on $C$. 
    Each fiber $E_d$ over $d\in D$ is endowed with a filtration
    \begin{align}\label{eq:parabolicstr}
 \hat{E}_d = \hat{E}_{d}(\alpha_1) & \supsetneq \hat{E}_{d}(\alpha_2)  \supsetneq \dots \supsetneq  \hat{E}_{d}(\alpha_s)  \supsetneq 0
\end{align}
of subspaces. 
The real numbers $0 \leq \alpha_1 < \alpha_2 < \dots < \alpha_s < 1$ are the corresponding parabolic weights (which, together with $s$, depend on $d$ as well). 
\item A holomorphic section $\hat{\varphi}\in H^0(C,\Omega^1_C(\log D)\otimes \mathrm{End}(\hat{E}))$ such that $\hat{\varphi}_d(\hat{E}_d(\alpha_j))\subset \hat{E}_d(\alpha_j)$ for each $d\in D$ and every $j\in \{ 1, \dots, s=s(d)\}$. 
\end{itemize}
If the Higgs field $\hat{\varphi}$ satisfies $\hat{\varphi}_d(\hat{E}_d (\alpha_j))\subset \hat{E}_d(\alpha_{j+1})$ for each $d$ and $j$, then a parabolic Higgs bundle is called a strongly parabolic Higgs bundle (see \cite{LogaresMartens}). 
Accordingly, what we defined as a parabolic Higgs bundle is sometimes defined as a weakly parabolic Higgs bundle.

\end{appendix}

\newcommand{\etalchar}[1]{$^{#1}$}

\end{document}